\newtheorem{thm}{Theorem}[section]
\newtheorem{cor}[thm]{Corollary}
\newtheorem{lem}[thm]{Lemma}
\newtheorem{prop}[thm]{Proposition}
\theoremstyle{remark}
\theoremstyle{definition}
\newtheorem{defi}[thm]{Definition}
\newtheorem{examp}[thm]{Example}
\begin{document}
\pagestyle{myheadings}
\address{Department of Mathematics, Technion - Israel Institute of Technology, 32000, Haifa, Israel.}
\email{max@tx.technion.ac.il}
\title{Subproduct systems over $\mathbb{N}\times\mathbb{N}$}
\subjclass[2010]{47L30, 47L75 (Primary); 47L20, 16D25 (Secondary) }
\keywords{ Non-self-adjoint operator algebras, subproduct systems, character space, homogeneous non-commutative polynomials}
\author{Maxim Gurevich}
\date{\today}

\begin{abstract}
We develop the theory of subproduct systems over the monoid $\mathbb{N}\times \mathbb{N}$, and the non-self-adjoint operator algebras associated with them. These are double sequences of Hilbert spaces $\{X(m,n)\}_{m,n=0}^\infty$ equipped with a multiplication given by coisometries from $X(i,j)\otimes X(k,l)$ to $X(i+k, j+l)$. We find that the character space of the norm-closed algebra generated by left multiplication operators (the tensor algebra) is homeomorphic to a complex homogeneous affine algebraic variety intersected with a unit ball. Certain conditions are isolated under which subproduct systems whose tensor algebras are isomorphic must be isomorphic themselves. In the absence of these conditions, we show that two numerical invariants must agree on such subproduct systems.\\
Additionally, we classify the subproduct systems over $\mathbb{N}\times \mathbb{N}$ by means of ideals in algebras of non-commutative polynomials.
\end{abstract}

\maketitle

\section{Introduction}
A subproduct system over a cancellative abelian monoid $\mathcal{S}$ is roughly a collection of finite-dimensional Hilbert spaces $\{X(s)\}_{s\in\mathcal{S}}$ equipped with coisometries $\{U_{s,t}:X(s)\otimes X(t)\to X(s+t)\}_{s,t\in\mathcal{S}}$ that behave in an associative fashion. This is a formal way of seeing $X(s+t)$ as a subspace of $X(s)\otimes X(t)$, for all $s,t\in \mathcal{S}$. When all $\{U_{s,t}\}$ are injective, this structure is called a product system.

Each vector $\eta\in X(t)$ defines, for each $s\in\mathcal{S}$, a shift from $X(s)$ to $X(s+t)$ by left tensoring with $\eta$. These shifts may all be represented on a suitable Hilbert space, and thus give rise to (several) operator algebras. Much of the interest in subproduct systems lies in the study of these operator algebras. 

The concept of a subproduct system was explicitly introduced by Shalit and Solel in \cite{solel-orr} as a tool for the study of cp-semigroups. Their emphasis was on the case of $\mathcal{S}=\mathbb{N}=\{0,1,2,\ldots\}$. It was seen that in this case the norm-closed algebra generated by the shifts (tensor algebra) serves as a universal object with regard to tuples of operators subject to given non-commutative relations. Another similar universality property was shown for the weakly closed algebra in \cite{gur-meyer}.

Independently, subproduct systems have appeared under the name `inclusion systems' in \cite{inclusion}. Also, we need to mention that subproduct systems and product systems are very often studied in a more general context in which the finite dimensional Hilbert spaces are replaced with $C^\ast$-correspondences. In this paper we restrict ourselves to the Hilbert space setting.

It was established that already in the case of the elementary monoid $\mathbb{N}$, subproduct systems and their associated operator algebras form a rich class of objects, which pose many research questions. Hence, it should come as no surprise, that subproduct systems over monoids with more complicated structure, e.g. $\mathbb{N}^k$, are quite an obscure domain. The aim of this paper is to develop a theory of subproduct systems for the case of $\mathcal{S}=\mathbb{N}\times \mathbb{N}$.

Our motivation and methods come from two closely related lines of study which have been developing in recent years. One is the aforementioned study of the non-self-adjoint algebras arising from subproduct systems over $\mathbb{N}$. From this direction, we draw the perspective from the initial work in \cite{solel-orr} and the consequent work of Davidson, Ramsey and Shalit \cite{orr-subprod}. The latter paper examined to what extent the tensor algebra of a subproduct system is a complete invariant of the subproduct system. We will attack the same question in our setting of $\mathbb{N}^2$.

The second direction is the study of product systems over $\mathbb{N}^2$. In \cite{solel-unitary}, the structure of the non-self-adjoint algebras associated with such a product system was analyzed, while \cite{dav-rep} concentrated on the representation theory of these algebras. This theme has a relation to higher-rank graph algebras in the sense of \cite{highrank}, since a rank $2$ graph on one vertex gives a special kind of a product system over $\mathbb{N}^2$ (see, for example \cite{highrank-one}).

We largely follow the paradigm that an operator algebra generated by the shifts of a subproduct system should behave as \textit{a quotient} of an algebra generated by the shifts of a product system. Indeed, it was implicit in the results of \cite{solel-orr} that tensor algebras of subproduct systems over $\mathbb{N}$ are isometrically isomorphic to quotients of corresponding product system tensor algebras. Later, this result was explicitly expanded to more general frameworks such as subproduct systems over $C^\ast$-correspondences \cite{ami}, and weakly closed algebras \cite{gur-meyer} (following the more general description of quotients of weakly closed algebras in \cite{dav-distance} and \cite{popescu-distance}).

Our main objects of interest will be the tensor algebra $\mathcal{A}_X$ associated with a subproduct system $X$ over $\mathbb{N}^2$, that is, the norm closed algebra generated by the shifts of $X$.

In Section 3, we show that in similarity with the situation over $\mathbb{N}$, each subproduct system can be embedded into a matching product system over $\mathbb{N}^2$. Next, we show that on the level of one-dimensional representations, $\mathcal{A}_X$ indeed behaves as a quotient of a matching product system. In other words, the character space of $\mathcal{A}_X$ is composed of characters on the tensor algebra of the product system that vanish on a certain ideal. That result, combined with the results of Power and Solel in \cite{solel-unitary} on product systems, gives an interesting description of the character space of $\mathcal{A}_X$: As a topological space, it is homeomorphic to a homogeneous affine algebraic variety in $\mathbb{C}^n$ (with $n= \dim X(1,0)+\dim X(0,1)$), intersected with a set that may be viewed as unit ball of a certain fixed norm on $\mathbb{C}^n$.

Section 4 slightly deviates from the main line of study to present an algebraic point of view on subproduct systems over $\mathbb{N}^2$. We are able to parametrize the collection of subproduct systems that can be embedded in a given product system, by a certain collection of non-commutative polynomials. In this manner we produce the existence of an abundance of subproduct systems.

In Section 5, we strive to apply the newly acquired knowledge about the character space, to the isomorphism problem for tensor algebras. Given two such algebras $\mathcal{A}_X$, $\mathcal{A}_Y$ that are isometrically isomorphic, does it necessarily mean that the underlying subproduct systems $X$ and $Y$ are isomorphic? Recalling that an isomorphism of the algebras induces a homeomorphism of their character spaces, one might be motivated to search for a natural condition, which could be formulated in terms of the action of this homeomorphism. Indeed, we will isolate a large class of subproduct systems for which the existence of a certain fixed point of the homeomorphism implies a positive answer to the above question.

In the last section, we look into subproduct systems whose tensor algebras are isomorphic, without further assumptions. In general, subproduct systems as such need not be isomorphic themselves. Yet, by looking at the differential structure of the tensor algebra's character space, we manage to arrive at two numerical invariants that must coincide for these subproduct systems.

This work is based on part of the author's M.~Sc.~thesis.



\section{Definitions}
\begin{defi}
Let $\mathcal{S}$ be an additive cancellative abelian monoid. A \textit{subproduct system (of finite-dimensional Hilbert spaces) over $\mathcal{S}$} is a family $X = \{X(s)\}_{s\in \mathcal{S}}$ of finite-dimensional Hilbert spaces such that \\
(1) $\dim X(0)= 1$ \\
(2) For every $s,t\in \mathcal{S}$ there is a coisometry 
$$U_{s,t}: X(s)\otimes X(t) \to X(s+t)$$
(3) The maps $\{U_{s,0}\}_{s\in \mathcal{S}}$ and $\{U_{0,s}\}_{s\in \mathcal{S}}$ are given by the natural isomorphisms $\mathbb{C}\otimes X(s)\cong X(s)\cong X(s)\otimes \mathbb{C}$. \\
(4) The maps $\{U_{s,t}\}_{s,t\in \mathcal{S}}$ satisfy
$$ U_{s+t,r}(U_{s,t}\otimes I_{X(r)}) = U_{s,t+r} (I_{X(s)}\otimes U_{t,r}) \quad \forall s,t,r\in \mathcal{S}$$
\end{defi}

Given a subproduct system $X$ over $\mathcal{S}$, each $s,t\in \mathcal{S}$ and $\eta\in X(s)$ define a \textit{creation operator} 
$$L_{\eta}^{s,t}:X(t)\to X(s+t)\quad \xi\:\mapsto U_{s,t}(\eta\otimes \xi)$$
Next, we define the Hilbert space $\mathcal{F}_X:= \bigoplus_{s\in\mathcal{S}} X(s)$, and call it the \textit{$X$-Fock space}.

For all $s\in \mathcal{S}$ and $\eta\in X(s)$, let $L^s_\eta$ be the linear operator on $\mathcal{F}_X$ defined on the summands by
$$L^{(s)}_\eta|_{X(t)}:= L^{s,t}_\eta\quad \forall t\in \mathcal{S}$$
\begin{lem}\label{norm-eq}
If $X$ is a subproduct system, then for all $\eta\in X(s)$, the operator $L^{(s)}_\eta$ is bounded, and its norm equals $\|\eta\|$.
\end{lem}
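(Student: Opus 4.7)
The plan is to establish the two inequalities $\|L^{(s)}_\eta\| \le \|\eta\|$ and $\|L^{(s)}_\eta\| \ge \|\eta\|$ separately, exploiting the orthogonal direct-sum structure of $\mathcal{F}_X$ together with the defining properties of the coisometries $U_{s,t}$.

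First, for the upper bound, I would observe that for each fixed $t \in \mathcal{S}$ and $\xi \in X(t)$, the map $L^{s,t}_\eta$ sends $\xi$ to $U_{s,t}(\eta\otimes\xi)$. Since $U_{s,t}$ is a coisometry (in particular a contraction), we get $\|L^{s,t}_\eta \xi\| \le \|\eta\otimes\xi\| = \|\eta\|\,\|\xi\|$, so each $L^{s,t}_\eta$ is bounded with norm at most $\|\eta\|$. The key point for bounding $L^{(s)}_\eta$ globally is that the monoid $\mathcal{S}$ is cancellative, so the translation $t \mapsto s+t$ is injective; consequently the ranges $L^{s,t}_\eta(X(t)) \subseteq X(s+t)$, as $t$ varies, lie in distinct (and hence mutually orthogonal) summands of $\mathcal{F}_X$. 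Writing a general element $f = \sum_t \xi_t \in \mathcal{F}_X$ with $\xi_t \in X(t)$, this gives
\[
\|L^{(s)}_\eta f\|^2 \;=\; \sum_{t\in\mathcal{S}} \|L^{s,t}_\eta \xi_t\|^2 \;\le\; \|\eta\|^2 \sum_{t\in\mathcal{S}} \|\xi_t\|^2 \;=\; \|\eta\|^2\,\|f\|^2,
\]
which shows both boundedness and the inequality $\|L^{(s)}_\eta\| \le \|\eta\|$.

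For the lower bound, I would test $L^{(s)}_\eta$ on the unit vector $1 \in X(0) \cong \mathbb{C}$. By property (3) of the definition, $U_{s,0}$ is the natural isomorphism $X(s)\otimes\mathbb{C} \cong X(s)$, so $L^{s,0}_\eta(1) = U_{s,0}(\eta\otimes 1) = \eta$. Therefore $\|L^{(s)}_\eta\| \ge \|\eta\|$, matching the upper bound.

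There is essentially no hard step here; the only subtlety is invoking cancellativity of $\mathcal{S}$ to ensure that the images of the distinct summands $X(t)$ under $L^{(s)}_\eta$ remain orthogonal, which is what lets the norms be computed summand-wise rather than requiring a Cauchy-Schwarz-type estimate with cross terms.
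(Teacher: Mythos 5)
Your proposal is correct and follows essentially the same route as the paper: the lower bound by applying $L^{(s)}_\eta$ to the vacuum vector $1\in X(0)$, and the upper bound by the coisometry estimate $\|U_{s,t}(\eta\otimes\xi)\|\le\|\eta\|\,\|\xi\|$ on each summand combined with the orthogonality of the ranges $X(s+t)$ coming from cancellativity of $\mathcal{S}$. Your explicit summand-wise norm computation is just a spelled-out version of the paper's observation that $\|L^{(s)}_\eta\|=\sup_{t}\|L^{s,t}_\eta\|$.
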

\begin{proof}
Denote $\Delta =1\in X(0)$. First, we have $\|\eta \|= \|L^{(s)}_\eta\Delta\|\leq \|L^{(s)}_\eta\|$. But, also, for all $\xi\in X(t)$,
$$\|L^{s,t}_\eta(\xi)\| = \|U^X_{s,t}(\eta\otimes \xi)\|\leq \|\eta\|\|\xi\|$$
Considering the fact that the ranges of $\{L^{s,t}_\eta\}_{t\in \mathcal{S}}$ are the orthogonal subspaces $\{X(s+t)\}_{t\in\mathcal{S}}$ ($\mathcal{S}$ is cancellative), we can conclude that $\|L^{(s)}_\eta\|=\sup_{t\in\mathcal{S}} \|L^{s,t}_\eta\|\leq \|\eta\|$. 
\end{proof}

\begin{defi}
We call the norm closed algebra $\mathcal{A}_X\subset B(\mathcal{F}_X)$, which is generated by $\{L^{(s)}_\eta\}_{\eta\in X(s), s\in\mathcal{S}}$, the \textit{tensor algebra of $X$}.
\end{defi}

\begin{lem}\label{finite-gen}
Suppose $\mathcal{S}$ is generated as a monoid by a set $B\subset \mathcal{S}$. Then, for a subproduct system $X$ over $\mathcal{S}$, we have
$$\mathcal{A}_X = \overline{Alg}\left\{ L^{(s)}_\eta\right\}_{\eta\in X(s), s\in B}$$
\end{lem}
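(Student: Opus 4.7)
The plan is to show the stronger statement that every generator $L^{(s)}_\eta$ of $\mathcal{A}_X$ (with $s\in\mathcal{S}$ arbitrary and $\eta\in X(s)$) already lies in the \emph{algebra} (not merely the norm-closed algebra) generated by $\{L^{(t)}_\xi\}_{t\in B,\, \xi\in X(t)}$. Given this, the desired equality follows immediately.

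The key computation is a composition rule: for any $s,t\in\mathcal{S}$, $\eta\in X(s)$, $\xi\in X(t)$, one has
$$L^{(s)}_\eta \circ L^{(t)}_\xi = L^{(s+t)}_{U_{s,t}(\eta\otimes\xi)}.$$
To check this, I would apply both sides to a vector $\zeta\in X(r)$ (the summands of $\mathcal{F}_X$): the left hand side unwinds to $U_{s,t+r}(\eta\otimes U_{t,r}(\xi\otimes\zeta)) = U_{s,t+r}(I_{X(s)}\otimes U_{t,r})(\eta\otimes\xi\otimes\zeta)$, which by the associativity axiom (4) equals $U_{s+t,r}(U_{s,t}\otimes I_{X(r)})(\eta\otimes\xi\otimes\zeta) = L^{(s+t)}_{U_{s,t}(\eta\otimes\xi)}\zeta$. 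An immediate induction then gives, for $s_1,\ldots,s_k\in B$ and $\eta_i\in X(s_i)$,
$$L^{(s_1)}_{\eta_1}\cdots L^{(s_k)}_{\eta_k} = L^{(s_1+\cdots+s_k)}_{U^{(k)}(\eta_1\otimes\cdots\otimes\eta_k)},$$
where $U^{(k)}:X(s_1)\otimes\cdots\otimes X(s_k)\to X(s_1+\cdots+s_k)$ is the iterated coisometry obtained from the $U_{\cdot,\cdot}$'s (well-defined up to associativity).

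Now fix $s\in\mathcal{S}$ and $\eta\in X(s)$. Since $B$ generates $\mathcal{S}$, write $s = s_1+\cdots+s_k$ with $s_i\in B$. The composed map $U^{(k)}$ is a coisometry between finite-dimensional Hilbert spaces and hence surjective, so there exist vectors $\zeta = \sum_j \eta_1^{(j)}\otimes\cdots\otimes\eta_k^{(j)}\in X(s_1)\otimes\cdots\otimes X(s_k)$ with $U^{(k)}(\zeta)=\eta$. Summing the composition identity over $j$ yields
$$L^{(s)}_\eta = \sum_j L^{(s_1)}_{\eta_1^{(j)}}\cdots L^{(s_k)}_{\eta_k^{(j)}},$$
which is a finite sum of products of creation operators indexed by $B$. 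Thus the generating set $\{L^{(s)}_\eta : s\in\mathcal{S}, \eta\in X(s)\}$ of $\mathcal{A}_X$ is contained in the (un-closed) algebra generated by $\{L^{(t)}_\xi : t\in B,\,\xi\in X(t)\}$, and taking norm closures gives the lemma.

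There is no real obstacle here; the only subtle point is verifying that the iterated coisometry $U^{(k)}$ is well-defined and surjective. Axiom (4) handles well-definedness (any bracketing gives the same map), and surjectivity is automatic since a composition of coisometries is a coisometry.
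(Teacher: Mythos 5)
Your proof is correct and follows essentially the same route as the paper: both arguments rest on the composition identity $L^{(s)}_\eta L^{(t)}_\xi = L^{(s+t)}_{U_{s,t}(\eta\otimes\xi)}$ (a direct consequence of axiom (4)) together with the surjectivity of the coisometries to decompose an arbitrary $\eta\in X(s)$. The only cosmetic difference is that the paper peels off one generator at a time by induction on the length of the word $s=s_1+\cdots+s_n$, whereas you decompose through the full iterated coisometry $U^{(k)}$ in one step.
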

\begin{proof}
Suppose $\eta_0\in X(s_0)$ for some $s_0\in \mathcal{S}$. It suffices to prove that $L^{(s_0)}_{\eta_0}\in Alg\left\{ L^{(s)}_\eta\right\}_{\eta\in X(s), s\in B}$. Since $B$ generates $\mathcal{S}$, $s_0= \sum_{i=1}^n s_i$ for some $s_1,\ldots, s_n\in B$. We will prove it by induction on $n$. For $n=1$, $s_0=s_1\in B$, and there is nothing to prove. Denote $u= \sum_{i=1}^{n-1} s_i$. So, $\eta_0 = U_{s_n,u}(\sum_{j=1}^k \eta_j\otimes \zeta_j)$ for some $\{\eta_j\}\subset X(s_n)$ and $\{\zeta_j\}\subset X(u)$. Thus, for all $t\in \mathcal{S}$ and $\xi\in X(t)\subset \mathcal{F}_X$, we have,
$$L^{(s_0)}_{\eta_0}\xi = U_{s_0,t}(\eta_0\otimes \xi) = U_{s_0,t}(U_{s_n,u}\otimes I_{X(t)})\left(\sum_{j=1}^k \eta_j\otimes \zeta_j\otimes \xi\right)=$$
$$ =U_{s_n,u+t}(I_{X(s_n)}\otimes U_{u,t})\left(\sum_{j=1}^k \eta_j\otimes \zeta_j\otimes \xi\right) = U_{s_n,u+t} \left(\sum_{j=1}^k \eta_j\otimes L^{(u)}_{\zeta_j}\xi\right) = $$ $$=\sum_{j=1}^k L^{(s_n)}_{\eta_j}L^{(u)}_{\zeta_j}\xi \quad \Rightarrow \quad L^{(s_0)}_{\eta_0} = \sum_{j=1}^k L^{(s_n)}_{\eta_j}L^{(u)}_{\zeta_j}$$
But, by the induction hypothesis $\{L^{(u)}_{\zeta_j}\}$ are in the algebra generated by $\left\{ L^{(s)}_\eta\right\}_{\eta\in X(s), s\in B}$, hence, so is $L^{(s_0)}_{\eta_0}$.
\end{proof}
\begin{cor}
For a subproduct system $X$ over $\mathbb{N}^2$, the tensor algebra $\mathcal{A}_X$ is the unital norm-closed algebra generated by the operators $\{L^{(1,0)}_\eta\}_{\eta\in X(1,0)}$ and $\{L^{(0,1)}_\eta\}_{\eta\in X(0,1)}$.
\end{cor}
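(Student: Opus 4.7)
The plan is to invoke Lemma \ref{finite-gen} with $\mathcal{S} = \mathbb{N}^2$ and the two-element set $B = \{(1,0),(0,1)\}$. First I would verify that $B$ generates $\mathbb{N}^2$ as a monoid: any element $(m,n)\in\mathbb{N}^2$ is the sum $m(1,0)+n(0,1)$ of elements of $B$, and the identity $(0,0)$ arises as the empty sum. Applying the lemma then yields
$$\mathcal{A}_X = \overline{Alg}\left(\{L^{(1,0)}_\eta\}_{\eta\in X(1,0)} \cup \{L^{(0,1)}_\eta\}_{\eta\in X(0,1)}\right),$$
which is exactly the content of the corollary, up to the adjective ``unital''.

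To handle ``unital'', I would observe that clause (3) of the definition of a subproduct system identifies $U_{(0,0),t}$ with the natural isomorphism $\mathbb{C}\otimes X(t) \cong X(t)$. Consequently the creation operator $L^{(0,0)}_1$ associated with $1\in X(0,0)=\mathbb{C}$ restricts to the identity on every summand $X(t)$ of $\mathcal{F}_X$, so $L^{(0,0)}_1 = I_{\mathcal{F}_X}$. Since $L^{(0,0)}_1$ is among the generators listed in the definition of the tensor algebra, $\mathcal{A}_X$ already contains the unit, and the words ``unital norm-closed algebra'' in the statement create no additional obligation.

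I do not anticipate any real obstacle; the corollary is a direct specialization of Lemma \ref{finite-gen} to $\mathbb{N}^2$, combined with the trivial observation that the identity operator lies in $\mathcal{A}_X$. The only point requiring any attention at all is to make the generation claim for $B\subset \mathbb{N}^2$ explicit, so that the hypothesis of Lemma \ref{finite-gen} is visibly met.
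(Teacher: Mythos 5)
Your proposal is correct and matches the paper's intent exactly: the corollary is stated without proof precisely because it is the immediate specialization of Lemma \ref{finite-gen} to $\mathcal{S}=\mathbb{N}^2$ with $B=\{(1,0),(0,1)\}$, and your observation that $L^{(0,0)}_1=I_{\mathcal{F}_X}$ correctly disposes of the word ``unital''.
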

From now on, unless stated otherwise, we will always assume $\mathcal{S} = \mathbb{N}\times \mathbb{N}$, and when referring to a subproduct system the assumption will be it is over $\mathbb{N}\times\mathbb{N}$.

We will also wish to have a notion of an isomorphism between subproduct systems. The following definition is new because it may incorporate an automorphism of the underlying monoid, that is, a coordinate switch.
\begin{defi}
Suppose $X,Y$ are two subproduct systems over $\mathbb{N}^2$, with families of coisometries $\{U^X_{s,t}\}$ and $\{U^Y_{s,t}\}$. We say that $X$ and $Y$ are \textit{isomorphic}, if there exists a collection of unitary maps $\{V_s:X(s)\to Y(\phi(s))\}_{0\not=s\in \mathbb{N}^2}$ such that 
$$V_{s+t}\circ U^X_{s,t} = U^Y_{\phi(s),\,\phi(t)}\circ (V_s\otimes V_t)\quad \forall s,t\in\mathcal{S}$$
where $\phi:\mathbb{N}^2\to\mathbb{N}^2$ is either the identity or the coordinate switch $\phi(i,j)=(j,i)$. 
\end{defi}

\section{From product systems to subproduct systems}
Given a subproduct system $X$, we would like to study the properties of the tensor algebra $\mathcal{A}_X$. One such property is its character space $\mathcal{M}(\mathcal{A}_X)$, that is, the set of all multiplicative functions $\alpha: \mathcal{A}_X\to \mathbb{C}$. One may view it as the study of one-dimensional representations of $\mathcal{A}_X$. 

In order to improve the understanding of $\mathcal{M}(\mathcal{A}_X)$, we need to translate our setting to the setting discussed in \cite{solel-unitary}. We will briefly review it now, and delve into greater detail a while later. Given two finite-dimensional Hilbert spaces $E,F$, we construct the full Fock space $\mathcal{F}(E,F):= \bigoplus_{m,n=0}^\infty E^{\otimes m}\otimes F^{\otimes n}$ (with $E^{\otimes 0}\otimes F^{\otimes 0} = \mathbb{C}$). The construction in \cite{solel-unitary}, assigns to each unitary operator $u: F\otimes E\to E\otimes F$ a norm-closed algebra $\mathcal{A}_u\subset B(\mathcal{F}(E,F))$. This algebra is generated by two tuples of operators, $L_{e_1},\ldots, L_{e_m}$ and $L_{f_1},\ldots, L_{f_n}$, each of which is freely noncommuting, and when combined they are subject to the relations
$$L_{f_j}L_{e_i} = \sum_{k,l} u_{(k,l),(i,j)}L_{e_k}L_{f_l}$$
where $(u_{(k,l),(i,j)})\in M_{mn}(\mathbb{C})$ is a unitary matrix, which represents $u$ relative to a suitable basis.

We will show that for every subproduct system $X$, the algebra $\mathcal{A}_X$ is continuously isomorphic to the \textit{norm-closure} of the compression of $\mathcal{A}_u\subset B(\mathcal{F}(E,F))$ onto a co-invariant subspace, for $E=X(1,0)$, $F=X(0,1)$ and $u$ a commutation relation for $X$. 

\subsection{Standard subproduct systems}
Recall, that \cite{solel-orr} had a notion of a standard subproduct system over $\mathbb{N}$. That was a subproduct system over $\mathbb{N}$ which was given by a sequence of subspaces $\{X(n)\subset X(1)^{\otimes n}\}_{n=1}^\infty$ and the coisometries were naturally given by the projections onto those subspaces. It was shown there that, up to isomorphism, all subproduct systems over $\mathbb{N}$ are standard. Proposition \ref{standard-2} will show that a similar result is valid in our setting. That is, the elements of every subproduct system (in our current context) $X$ can be seen as subspaces of tensor products of $X(1,0)$ and $X(0,1)$. More precisely, if we denote $E=X(1,0)$ and $F=X(0,1)$, we will show that, up to isomorphism, for all $(m,n)$, $X(m,n)\subset E^{\otimes m}\otimes F^{\otimes n}$ and thus, $\mathcal{F}_X\subset \mathcal{F}(E,F)$, and the coisometries are given by some agreed multiplication in $\mathcal{F}(E,F)$ and projection onto $\mathcal{F}_X$. We will call such product system \textit{standard}.

It is worth remarking, that in \cite{solel-orr} it was displayed that already over the monoid $\mathbb{N}^3$ such property \textit{does not} hold. By that we mean, that, vaguely speaking, not every subproduct system $X$ over $\mathbb{N}^3$ can be seen as a sub-structure of the tensor products of $X(1,0,0),X(0,1,0),X(0,0,1)$. This fact can be taken as the initial motivation for specializing the study to subproduct system over $\mathbb{N}\times \mathbb{N}$. 

Before stating precisely the aforementioned proposition, we need to describe a simple construction. Suppose $E$ and $F$ are Hilbert spaces, and $u:F\otimes E\to E\otimes F$ an operator. We would like to extend this `commutation' operator in a natural manner to an operator from $ E^{\otimes i}\otimes F^{\otimes j}\otimes E^{\otimes k}\otimes F^{\otimes l}$ to $E^{\otimes i+k}\otimes F^{\otimes j+l}$. This is done as follows. For all $n\geq1$, denote $u^{(1,n)}: F\otimes E^{\otimes n}\to E^{\otimes n}\otimes F$ to be the operator given by
$$u^{(1,n)} =(I_{E^{\otimes (n-1)}}\otimes u)(I_{E^{\otimes (n-2)}}\otimes u\otimes I_{E})\cdots(u\otimes I_{E^{\otimes (n-1)}}) $$
Next, denote for all $m,n\geq 1$, $u^{(m,n)}:F^{\otimes m}\otimes E^{\otimes n} \to E^{\otimes n}\otimes F^{\otimes m}$ to be the operator given by
$$u^{(m,n)} = (u^{(1,n)}\otimes I_{F^{\otimes m-1}})\cdots (I_{F^{\otimes (m-2)}}\otimes u^{(1,n)}\otimes I_F)(I_{F^{\otimes (m-1)}}\otimes u^{(1,n)})$$
Finally, denote,
$$W_{(i,j),(k,l)}^u := I_{E^{\otimes i}}\otimes u^{(j,k)}\otimes I_{F^{\otimes l}}: E^{\otimes i}\otimes F^{\otimes j}\otimes E^{\otimes k}\otimes F^{\otimes l}\to  E^{\otimes i+k}\otimes F^{\otimes j+l} $$
Note, that when $u$ is taken to be unitary, $W^u_{(i,j),(k,l)}$ is unitary.

\begin{prop}\label{standard-2}
Let $X = \{X(m,n)\}_{(m,n)\in \mathbb{N}^2}$ be a subproduct system with coisometries $\{U^X_{(i,j),(k,l)}\}$. Then, $X$ is isomorphic to a subproduct system $Y = \{Y(m,n)\}_{(m,n)\in \mathbb{N}^2}$ with coisometries $\{U^Y_{(i,j),(k,l)}\}$ such that for all $(m,n)\not= (0,0)$,
$$Y(m,n) \subset  E^{\otimes m}\otimes F^{\otimes n}$$
where $E=X(1,0)$ and $F=X(0,1)$, and 
$$U^Y_{(i,j),(k,l)} = p_{(i+k, j+l)}W_{(i,j),(k,l)}^u \;\arrowvert_{Y(i,j)\otimes Y(k,l)}$$
where $p_{(m,n)}\in B(E^{\otimes m}\otimes F^{\otimes n})$ are the orthogonal projection on $Y(m,n)$, and $u$ can be chosen as any unitary operator from $F\otimes E$ to $ E\otimes F$ that satisfies $p_{(1,1)}u = (U^{X}_{(1,0),(0,1)})^\ast U^X_{(0,1),(1,0)}$.
\\
Moreover, we have,
$$p_{(i+k,j+l)}\leq W_{(i,j),(k,l)}^u(p_{(i,j)}\otimes I_{E^{\otimes k}\otimes F^{\otimes l}})(W_{(i,j),(k,l)}^u)^\ast$$
$$p_{(i+k,j+l)}\leq W_{(i,j),(k,l)}^u(I_{E^{\otimes i}\otimes F^{\otimes j}}\otimes p_{(k,l)})(W_{(i,j),(k,l)}^u)^\ast$$

\end{prop}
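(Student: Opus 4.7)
The plan is to construct $Y(m,n)$ as the image of an isometric embedding $T_{m,n}:X(m,n)\hookrightarrow E^{\otimes m}\otimes F^{\otimes n}$, obtained by iterating the adjoints of the coisometries of $X$ along the ``standard'' path that first stacks the $E$-factors and then the $F$-factors. Concretely, define $T_{m,0}^*:E^{\otimes m}\to X(m,0)$ and $T_{0,n}^*:F^{\otimes n}\to X(0,n)$ by iterating the one-step coisometries $U^X_{(k,0),(1,0)}$ and $U^X_{(0,k),(0,1)}$ (any bracketing gives the same map by axiom (4)), and set $T_{m,n}^*:=U^X_{(m,0),(0,n)}(T_{m,0}^*\otimes T_{0,n}^*)$. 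A further application of axiom (4) yields the derived associativity relations
$$T_{i+k,l}^*=U^X_{(i,0),(k,l)}(T_{i,0}^*\otimes T_{k,l}^*),\qquad T_{i,j+l}^*=U^X_{(i,j),(0,l)}(T_{i,j}^*\otimes T_{0,l}^*),$$
which will be essential below. I take $Y(m,n):=T_{m,n}(X(m,n))$, let $p_{(m,n)}$ be the orthogonal projection onto $Y(m,n)$, and let $V_{m,n}:X(m,n)\to Y(m,n)$ be the unitary obtained by corestricting $T_{m,n}$.

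Next I would produce the unitary $u$. The operator $S:=(U^X_{(1,0),(0,1)})^*U^X_{(0,1),(1,0)}:F\otimes E\to E\otimes F$ is a partial isometry with initial space $(\ker U^X_{(0,1),(1,0)})^\perp$ and final space $(U^X_{(1,0),(0,1)})^*(X(1,1))=Y(1,1)$. The orthogonal complements of these subspaces in $F\otimes E$ and $E\otimes F$ have equal dimension $\dim E\cdot\dim F-\dim X(1,1)$, so $S$ extends to a unitary $u:F\otimes E\to E\otimes F$ by choosing any unitary identification between them; by construction $p_{(1,1)}u=S$.

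The heart of the proof is the intertwining identity
$$T_{i+k,j+l}\circ U^X_{(i,j),(k,l)}=p_{(i+k,j+l)}W^u_{(i,j),(k,l)}(T_{i,j}\otimes T_{k,l}),$$
which at once shows that $V$ is an isomorphism of subproduct systems and that $U^Y$ as written is the correct coisometry. I would prove this by induction on $jk$, the number of elementary $u$-swaps packaged inside $W^u_{(i,j),(k,l)}$. In the base case $jk=0$ the operator $W^u$ reduces to the identity; by the derived associativity (and its adjoint), $T_{i,0}\otimes T_{k,l}$ sends $(\ker U^X_{(i,0),(k,l)})^\perp$ isometrically onto $Y(i+k,l)$, hence sends the complementary kernel orthogonally into $Y(i+k,l)^\perp$, so $p_{(i+k,l)}$ absorbs the projection $(U^X)^*U^X$ and the identity follows. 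For the inductive step ($j,k\geq 1$), the recursive definition of $u^{(j,k)}$ lets me peel off a single elementary $u$ acting between one $F$-factor and one $E$-factor; the relation $p_{(1,1)}u=(U^X_{(1,0),(0,1)})^*U^X_{(0,1),(1,0)}$, together with the associativity identities for $T$, replaces that $u$ by a pair of $X$-coisometries, whereupon the inductive hypothesis applies. This inductive step is the main obstacle: one must track the positions of the elementary commutations in $W^u$ carefully and verify at each stage that the $(1,1)$-relation can be invoked in the correct position.

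Finally, the two ``moreover'' inequalities follow from the intertwining identity by a norm argument. Given $\xi\in Y(i+k,j+l)$, write $\xi=T_{i+k,j+l}(\tilde\xi)$; taking adjoints in the intertwining identity yields $(T_{i,j}^*\otimes T_{k,l}^*)(W^u_{(i,j),(k,l)})^*\xi=(U^X_{(i,j),(k,l)})^*\tilde\xi$, which has norm $\|\xi\|$ because $(U^X)^*$ is an isometry. Since $(W^u)^*\xi$ already has norm $\|\xi\|$ (as $W^u$ is unitary) while $T_{i,j}^*\otimes T_{k,l}^*$ has operator norm one, this forces $(W^u)^*\xi\in Y(i,j)\otimes Y(k,l)$, so $\xi\in W^u(Y(i,j)\otimes Y(k,l))$ is contained in both $W^u(Y(i,j)\otimes(E^{\otimes k}\otimes F^{\otimes l}))$ and $W^u((E^{\otimes i}\otimes F^{\otimes j})\otimes Y(k,l))$, yielding both inequalities at once.
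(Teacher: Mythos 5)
Your proposal is correct and follows essentially the same route as the paper: your $T_{m,n}^*$ is the paper's canonical coisometry $V_{(m,n)}$, your intertwining identity is (up to composing with the isometries $T$) the paper's key relation $V_{(i,j,k,l)}=V_{(i+k,j+l)}W^u_{(i,j),(k,l)}$, and the inductive step you sketch is carried out in the paper by first showing $V_{(1,1)}u=V_{(0,1,1)}$ via $V_{(1,1)}=V_{(1,1)}p_{(1,1)}$ and then propagating to $V_{(m,n)}u^{(m,n)}=V_{(0,n,m)}$. Your explicit verification that the partial isometry $(U^X_{(1,0),(0,1)})^*U^X_{(0,1),(1,0)}$ extends to a unitary $u$, and your norm argument for the two projection inequalities, are small additions/variations on the paper's (more directly algebraic) treatment, but do not change the substance of the argument.
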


\begin{proof}
For every tuple of non-negative integers $\overline{v} = (m_1,n_1,\ldots,m_r,n_r)$, let 
$$V_{\overline{v}}:E^{\otimes m_1}\otimes F^{\otimes n_1}\otimes \cdots \otimes E^{\otimes m_r}\otimes F^{\otimes n_r}  \to X\left(\sum_{i=1}^r m_i,\sum_{i=1}^r n_i\right)$$
be the coisometry which is a composition of maps from the collection $\{U^X_{(i,j),(k,l)}\}$. It is a consequence of the associativity properties of $X$ that an operator as such is well-defined. Define $Y(m,n)$ to be the initial space of $V_{(m,n)}$. Suppose $u$ is any unitary operator that satisfies the requirement in the statement. Notice, that
$$V_{(1,1)}u = V_{(1,1)}p_{(1,1)}u =  U^{X}_{(1,0),(0,1)}(U^{X}_{(1,0),(0,1)})^\ast U^X_{(0,1),(1,0)} = U^X_{(0,1),(1,0)} = V_{(0,1,1)}$$
Now, by induction, one can show that $V_{(m,n)}u^{(m,n)} = V_{(0,n,m)}$. That, in turn, means that
$$V_{(i,j,k,l)} = U^X_{(i,0),(k,j+l)}(I_{X(i,0)}\otimes U^X_{(k,j),(0,l)})(V_{(i)}\otimes V_{(0,j,k)}\otimes V_{(0,l)}) =$$
$$=  U^X_{(i,0),(k,j+l)}(I_{E^{\otimes i}}\otimes U^X_{(k,j),(0,l)})(V_{(i)}\otimes V_{(k,j)}\otimes V_{(0,l)})(I_{E^{\otimes i}}\otimes u^{(j,k)}\otimes I_{F^{\otimes l}})=$$
$$= V_{(i+k,j+l)}W^u_{(i,j),(k,l)} $$
Therefore, $p_{(i+k,j+l)}W_{(i,j),(k,l)}^u = (V_{(i+k,j+l)})^\ast V_{(i,j,k,l)}$. Thus, $\{U^Y_{(i,j),(k,l)}\}$ are indeed coisometries. Also, since $V_{(i,j,k,l)}= U^X_{(i,j),(k,l)}(V_{(i,j)}\otimes V_{(k,l)})$, we have 
$$p_{(i+k,j+l)}W_{(i,j),(k,l)}^u(I\otimes p_{(k,l)}) = p_{(i+k,j+l)}W_{(i,j),(k,l)}^u(p_{(i,j)}\otimes I)=p_{(i+k,j+l)}W_{(i,j),(k,l)}^u $$
This easily gives the inequalities written in the statement. 

To show the associativity of those maps, we will compute $V_{(i,j,k,l,s,t)}$ in two ways.
$$V_{(i,j,k,l,s,t)} = U^X_{(i+k,j+l),(s,t)}(V_{(i,j,k,l)}\otimes V_{(s,t)}) =$$ $$ =U^X_{(i+k,j+l),(s,t)}(V_{(i+k,j+l)}\otimes V_{(s,t)}) (W^u_{(i,j),(k,l)}\otimes I_{E^{\otimes s}\otimes F^{\otimes t}})= $$ $$= V_{(i+k,j+l,s,t)}(W^u_{(i,j),(k,l)}\otimes I_{E^{\otimes s}\otimes F^{\otimes t}})=$$ $$= V_{(i+k+s,j+l+t)}U^Y_{(i+k,j+l),(s,t)}(W^u_{(i,j),(k,l)}\otimes I_{E^{\otimes s}\otimes F^{\otimes t}})$$
Similarly,
$$V_{(i,j,k,l,s,t)} = U^X_{(i,j),(k+s,l+t)}(V_{(i,j)}\otimes V_{(k,l,s,t)}) =$$ $$ =U^X_{(i,j),(k+s,l+t)}(V_{(i,j)}\otimes V_{(k+s,l+t)}) (I_{E^{\otimes i}\otimes F^{\otimes j}}\otimes W^u_{(k,l),(s,t)})= $$ $$= V_{(i,j, k+s,l+t)}(I_{E^{\otimes i}\otimes F^{\otimes j}}\otimes W^u_{(k,l),(s,t)})=$$ $$= V_{(i+k+s,j+l+t)}W^u_{(i,j),(k+s,l+t)}(I_{E^{\otimes i}\otimes F^{\otimes j}}\otimes W^u_{(k,l),(s,t)})$$
Hence, we can conclude that,
$$U^Y_{(i+k,j+l),(s,t)}(U^Y_{(i,j),(k,l)}\otimes I_{Y(s,t)}) = U^Y_{(i,j),(k+s,l+t)}(I_{Y(i,j)}\otimes U^Y_{(k,l),(s,t)})$$
We will finish by showing that the maps $V_{(m,n)}^\ast:X(m,n)\to Y(m,n)$ constitute an isomorphism of subproduct systems. Indeed, for all $x_1\in X(i,j)$ and $x_2\in X(k,l)$, 
$$ V_{(i+k,j+l)} U^Y_{(i,j),(k,l)}(V_{(i,j)}^\ast(x_1)\otimes V_{(k,l)}^\ast(x_2)) = V_{(i,j,k,l)}(V_{(i,j)}^\ast(x_1)\otimes V_{(k,l)}^\ast(x_2)) =$$ $$= U^X_{(i,j),(k,l)}(V_{(i,j)}V_{(i,j)}^\ast(x_1)\otimes V_{(k,l)}V_{(k,l)}^\ast(x_2)) = U^X_{(i,j),(k,l)}(x_1\otimes x_2) $$
$$\Longrightarrow\quad U^Y_{(i,j),(k,l)}(V_{(i,j)}^\ast(x_1)\otimes V_{(k,l)}^\ast(x_2)) = V_{(i+k,j+l)}^\ast U^X_{(i,j),(k,l)}(x_1\otimes x_2)$$

\end{proof}
\textit{Remark:} It is also possible to prove that \textit{any} double-sequence of subspaces $Y(m,n)\subset E^{\otimes m}\otimes F^{\otimes n}$ as in the proposition above defines a subproduct system with coisometries as defined above, as long as it satisfies the stated inequalities of projections. One should only do the simple, yet tedious, work of verifying the associativity of the maps involved.

\begin{defi}
A subproduct system over $\mathbb{N}\times \mathbb{N}$ which satisfies the conditions of $Y$ in the statement of Proposition \ref{standard-2} will be called a \textit{standard subproduct system}. A unitary $u$ which satisfies the conditions in the statement of Proposition \ref{standard-2} will be called a \textit{commutation relation} for $Y$.
\end{defi}
\begin{cor}\label{standard}
Let $X$ be a subproduct system. Then, the tensor algebra $\mathcal{A}_X$ isometrically isomorphic to a tensor algebra $\mathcal{A}_Y$ of a standard subproduct system.
\end{cor}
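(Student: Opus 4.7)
The strategy is to leverage Proposition~\ref{standard-2}, which produces a standard subproduct system $Y$ together with unitaries $V_{(m,n)}^\ast : X(m,n) \to Y(m,n)$ that intertwine the coisometries of $X$ and $Y$ in the sense spelled out at the end of that proof. Once this is granted, all that remains is to verify the general fact that any unitary isomorphism of subproduct systems over $\mathbb{N}^2$ lifts to an isometric isomorphism of the associated tensor algebras.

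To implement this, I would assemble the family $\{V_{(m,n)}^\ast\}$ into a single unitary $V := \bigoplus_{(m,n)} V_{(m,n)}^\ast : \mathcal{F}_X \to \mathcal{F}_Y$ between the Fock spaces. Using the intertwining identity
$$U^Y_{(i,j),(k,l)}\bigl(V_{(i,j)}^\ast \otimes V_{(k,l)}^\ast\bigr) \;=\; V_{(i+k,j+l)}^\ast\, U^X_{(i,j),(k,l)},$$
established in the last display of the proof of Proposition~\ref{standard-2}, one verifies directly that for every $s\in\mathbb{N}^2$ and $\eta \in X(s)$,
$$V\, L^{(s)}_\eta\, V^\ast \;=\; L^{(s)}_{V_s^\ast \eta} \quad \in B(\mathcal{F}_Y).$$
Checking this equality on each summand $Y(t)\subset \mathcal{F}_Y$ reduces to applying the intertwining identity at $(s,t)$ and cancelling $V_t V_t^\ast = I_{Y(t)}$.

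Having established this, the conjugation map $\Phi(T) := V T V^\ast$ is a surjective isometric algebra isomorphism from $B(\mathcal{F}_X)$ onto $B(\mathcal{F}_Y)$. It carries the generating set $\{L^{(s)}_\eta : \eta \in X(s),\ s\in \mathbb{N}^2\}$ of $\mathcal{A}_X$ bijectively onto the analogous generating set of $\mathcal{A}_Y$, since $\eta \mapsto V_s^\ast \eta$ is a unitary $X(s)\to Y(s)$ for each $s$. By norm continuity of $\Phi$, the dense subalgebra generated by the first set is mapped onto the dense subalgebra generated by the second, and hence $\Phi$ restricts to an isometric isomorphism $\mathcal{A}_X \to \mathcal{A}_Y$.

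I do not anticipate a real obstacle here, as the substantial work was already discharged in Proposition~\ref{standard-2}; what is left is only the standard observation that an isomorphism of subproduct systems is implemented by a Fock-space unitary, so the induced map between tensor algebras is automatically isometric.
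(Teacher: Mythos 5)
Your argument is correct and is essentially the paper's own proof: both extract the standard system $Y$ and the fiberwise unitaries from Proposition~\ref{standard-2}, assemble them into a single Fock-space unitary $V$, verify the covariance relation $V L^{(s)}_\eta V^\ast = L^{(s)}_{V_s^\ast\eta}$, and conclude that conjugation by $V$ carries the generating set of $\mathcal{A}_X$ onto that of $\mathcal{A}_Y$. The only difference is that you spell out the generator-chasing and density step that the paper leaves as a ``quick check.''
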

\begin{proof}
Proposition \ref{standard-2} supplies a standard subproduct system $Y$, and a subproduct system isomorphism $\{V_{m,n}:X(m,n)\to Y(m,n)\}_{(m,n)\in\mathbb{N}^2}$. Notice, that $V:=\bigoplus_{(m,n)\in\mathbb{N}^2} V_{m,n}: \mathcal{F}_X\to \mathcal{F}_Y$ is unitary. A quick check can show, also, that for all $\eta\in X(m,n)$,
$$L^{(m,n)}_\eta = V^\ast L^{(m,n)}_{V_{m,n}(\eta)} V$$
Thus, the result follows from the fact that $H^\infty(X)$ and $H^\infty(Y)$ are generated by unitary equivalent sets of operators.
\end{proof}

There is, in fact, a fairly simple way of constructing standard subproduct systems. 
\begin{prop}\label{construct}
Suppose $E$ and $F$ are Hilbert spaces, $u : F\otimes E\to E\otimes F$ a unitary operator, and $L\subset \mathbb{N}\times \mathbb{N}$ is a subset which satisfies
$$(i,j)\in L\quad\Rightarrow \quad (k,l)\in L\:\forall\, k\leq i,\: l\leq j$$
Suppose we are given a set of projections $\{p_{(i,j)}\in B(E^{\otimes i}\otimes F^{\otimes j})\}_{(i,j)\in L}$ which satisfy the inequalities stated in Proposition \ref{standard-2} with respect to $u$, and $p_{(0,0)} = I_{\mathbb{C}}, p_{(1,0)}=I_E, p_{(0,1)}=I_F$.

Then,\\
\textbf{(a)} There is a maximal standard subproduct system $X$ such that $X(1,0)=E$, $X(0,1)=F$, $u$ is a commutation relation for $X$, and for all $(i,j)\in L$, $p_{(i,j)}$ is the projection on $X(i,j)$. 
The maximality is in the sense that if $Y$ is a standard subproduct system with the same commutation relation, and $Y(i,j)= X(i,j)$ for all $(i,j)\in L$, then $Y(i,j)\subset X(i,j)$ for all $(i,j)\in \mathbb{N}^2$.\\
\textbf{(b)} The fibers of $X$ are given by
$$X(i,j) = \bigcap_{\substack{(k,l)+(s,t)= (i,j)\\ (k,l)\not= (i,j)\\ (s,t)\not=(i,j)}} W^u_{(k,l),(s,t)}\left(X(k,l)\otimes X(s,t)\right) \quad \forall (i,j)\not\in L$$
\end{prop}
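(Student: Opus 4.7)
The construction is dictated by part \textbf{(b)}. My plan is to define the fibers $X(i,j) \subset E^{\otimes i} \otimes F^{\otimes j}$ by well-founded induction on $i+j$, setting $X(i,j) := p_{(i,j)}(E^{\otimes i} \otimes F^{\otimes j})$ for $(i,j) \in L$, and for $(i,j) \notin L$ taking the intersection formula of \textbf{(b)} as the \emph{definition}. The recursion is sound because every admissible decomposition $(i,j) = (k,l)+(s,t)$ with both summands nonzero involves pairs of strictly smaller total degree, for which the fibers have already been constructed. Write $p_{(i,j)}$ for the corresponding orthogonal projection.

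Next I would verify the two projection inequalities of Proposition \ref{standard-2} for every pair $(i,j), (k,l)$, equivalently the inclusion $X(i+k, j+l) \subset W^u_{(i,j),(k,l)}(X(i,j) \otimes X(k,l))$. When $(i+k, j+l) \notin L$ this is immediate from the definition (the decompositions involving a zero summand are automatic since $p_{(0,0)}=I_\mathbb{C}$). When $(i+k, j+l) \in L$, the hereditary property of $L$ forces both $(i,j)$ and $(k,l)$ to lie in $L$, and the inequality is then among the hypotheses. With the inequalities in hand, I would invoke the remark following Proposition \ref{standard-2} to conclude that $\{X(i,j)\}$ together with the coisometries $U^X_{(i,j),(k,l)} := p_{(i+k,j+l)} W^u_{(i,j),(k,l)}|_{X(i,j) \otimes X(k,l)}$ forms a standard subproduct system. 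A direct check shows that $u$ is a commutation relation for $X$ (the identity $p_{(1,1)}u = (U^X_{(1,0),(0,1)})^\ast U^X_{(0,1),(1,0)}$ reduces tautologically whether or not $(1,1) \in L$). This simultaneously yields existence in \textbf{(a)} and the formula in \textbf{(b)}.

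For the maximality assertion I would run a second induction on $i+j$. If $Y$ is a standard subproduct system with commutation relation $u$ satisfying $Y(i,j) = X(i,j)$ for $(i,j) \in L$, the base cases $(0,0),(1,0),(0,1)$ are covered by assumption. For $(i,j) \notin L$, standardness of $Y$ supplies $Y(i,j) \subset W^u_{(k,l),(s,t)}(Y(k,l) \otimes Y(s,t))$ for each nontrivial decomposition; the inductive hypothesis upgrades this to the same inclusion with $X$ in place of $Y$, and intersecting over all such decompositions gives $Y(i,j) \subset X(i,j)$.

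The main technical nuisance — rather than a deep obstacle — is the associativity check absorbed by the remark following Proposition \ref{standard-2}. If pressed to spell it out, I would fix a triple $(i,j), (k,l), (s,t)$ and compare the two compositions $U^X_{(i+k,j+l),(s,t)}(U^X_{(i,j),(k,l)} \otimes I)$ and $U^X_{(i,j),(k+s,l+t)}(I \otimes U^X_{(k,l),(s,t)})$ by unfolding the definition of $W^u_{\cdot,\cdot}$; the projection inequalities allow intermediate projections to be inserted or suppressed freely, reducing the identity to the tensor-factor rearrangement already built into the construction of the $W^u$'s, exactly parallel to the two-way computation of $V_{(i,j,k,l,s,t)}$ carried out in the proof of Proposition \ref{standard-2}.
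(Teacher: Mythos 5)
Your proof is correct, and it takes a genuinely different route from the paper's. The paper organizes the construction as a Zorn's lemma argument on a poset of partial projection families indexed by hereditary subsets $K\supset L$, extending a maximal element one minimal missing index $(i_0,j_0)$ at a time via the meet $\bigwedge W^u_{(i,j),(k,l)}(p_{(i,j)}\otimes p_{(k,l)})(W^u_{(i,j),(k,l)})^\ast$; it then derives the formula in \textbf{(b)} as a separate consequence of maximality, by building an auxiliary truncated subproduct system supported on a lower set $L'$ together with the single fiber $T(i,j)$, and comparing it to $X$. You instead take \textbf{(b)} as the definition, via well-founded recursion on total degree, which makes \textbf{(b)} automatic, reduces maximality to a clean induction using the standardness inequalities for $Y$, and dispenses with Zorn entirely (which is indeed overkill, since $\mathbb{N}^2$ is well-founded under total degree and the extension step of the paper's argument is itself just the inductive step in disguise). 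The two constructions produce the same object, since the paper's extension step uses exactly your intersection formula. Your case analysis for the projection inequalities is complete — in particular the reduction of the pair of inequalities to the single inclusion $X(i+k,j+l)\subset W^u_{(i,j),(k,l)}(X(i,j)\otimes X(k,l))$ is valid because $W^u$ is unitary and $(A\otimes H_2)\cap(H_1\otimes B)=A\otimes B$ for subspaces of finite-dimensional Hilbert spaces, and the observation that $(i+k,j+l)\in L$ forces both summands into $L$ correctly routes that case to the hypotheses. Both you and the paper lean on the unproven remark following Proposition \ref{standard-2} for associativity, so no ground is lost there.
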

\begin{proof}
\textbf{(a)} Consider a collection of projections $\{p_{(i,j)}^K\in B(E^{\otimes i}\otimes F^{\otimes j})\}_{(i,j)\in K}$ such that $L\subset K\subset \mathbb{N}^2$, $K$ satisfies the condition that was required from $L$, the projections satisfy the inequalities of Proposition \ref{standard-2}, and it is maximal in the sense that $q_{(i,j)}\leq p^K_{(i,j)}$ for any $q_{(i,j)}$ which is a projection associated with $Y$ as in the statement. Also, for all $(i,j)\in L$, $p_{(i,j)}^K = p_{(i,j)}$.

One such collection is obviously $\{p_{(i,j)}\}_{(i,j)\in L}$. These collections constitute a partially ordered set with respect to inclusion. To prove what is needed we need to show there is a collection $\{p^{\mathbb{N}^2}_{(i,j)}\}_{(i,j)\in \mathbb{N}^2}$ in the poset.

It is simple to check that this poset meets the requirements of Zorn's lemma, hence, has a maximal element $\{p_{(i,j)}^{K_0}\}_{(i,j)\in K_0}$. If $K_0\not= \mathbb{N}^2$, then we can denote $i_0=\min\{i:\exists j,\,(i,j)\not\in K_0\}$ and $j_0= \min\{j:(i_0,j)\not\in K_0\}$, and define the projection
$$p'_{(i_0,j_0)} := \bigwedge_{\substack{(i,j)+(k,l)= (i_0,j_0)\\ (i,j),(k,l)\not= (i_0,j_0)}} W_{(i,j),(k,l)}^u(p_{(i,j)}^{K_0}\otimes p_{(k,l)}^{K_0})(W_{(i,j),(k,l)}^u)^\ast$$
It is left to observe that the collection $\{p_{(i,j)}^{K_0}\}_{(i,j)\in K_0}\cup \{p'_{(i_0,j_0)}\}$ belongs to the poset, in contradiction to the maximality of $K_0$. Indeed, if $Y$ is a standard subproduct system as in the statement with projections $\{q_{(i,j)}\}_{(i,j)\in \mathbb{N}^2}$, then $q_{(i,j)}\leq p_{(i,j)}^{K_0}$ for all $(i,j)\in K_0$, and so we must have $q_{(i_0,j_0)}\leq p'_{(i_0,j_0)}$. 

Thus, $\{p^{\mathbb{N}^2}_{(i,j)}\}_{(i,j)\in \mathbb{N}^2}$ define a standard subproduct system $X$ as desired.

\textbf{(b)} For ease of notation we denote the right hand side of the formula $T(i,j)$. Obviously, $X(i,j)\subset T(i,j)$. To see that the reverse inclusion is correct for a given $(i,j)\not\in L$, observe the subproduct system $Y$ constructed as follows:
$$Y(k,l) = \left\{\begin{array}{ll} X(k,l) & (k,l)\in L' \\ T(i,j) & (k,l)=(i,j) \\ \{0\} & \mbox{\small{else}}\end{array}\right.$$
where $L\subset L' := \{(k,l)\;:\; k< i\}\cup\{ (k,l)\;:\;l<j\}$. Then, the maximality property of $X$ assures $T(i,j)= Y(i,j)\subset X(i,j)$.
\end{proof}

The last proposition shows the existence of a variety of subproduct systems. For a simple example, take $E$, $F$, $u$ as we like, and $L = \{(0,0), (1,0), (2,0)\}$. Then, choose any subspace of $E^{\otimes 2}$ and denote $p_{(2,0)}$ to be the projection on it. Proposition \ref{construct} assures the existence of a maximal subproduct system $X$ with $X(2,0)$ being the chosen subspace.

\begin{examp}\label{const-from-n}
Suppose $Y_1,Y_2$ are two standard subproduct systems over $\mathbb{N}$, in the sense of \cite{solel-orr}. We present a natural way of adjoining $Y_1$ and $Y_2$ into one subproduct system over $\mathbb{N}\times \mathbb{N}$ using any given commutation relation $u$.

Denote $E = Y_1(1)$, $F= Y_2(1)$. For all $i\geq1$ let $p_{(i,0)}$ be the projection from $Y_1(1)^{\otimes i}$ onto $Y_1(i)$, and for all $j\geq1$ let $p_{(0,j)}$ be  the projection from $Y_2(1)^{\otimes j}$ onto $Y_2(j)$. Then it is easy to see that for $L = \{(i,0)\}_{i\in \mathbb{N}}\cup\{(0,j)\}_{j\in \mathbb{N}}$, and any $u$, the defined set $\{p_{(i,j)}\}_{(i,j)\in L}$ satisfies the conditions of Proposition \ref{construct}. So, by that proposition we know there is a maximal subproduct system $X$ over $\mathbb{N}\times \mathbb{N}$ with commutation relation $u$, such that $X(i,0)= Y_1(i)$ and $X(0,j)= Y_2(j)$ for all $i,j\geq1$.
\end{examp}

\subsection{Product systems}
Suppose $E$ and $F$ are finite-dimensional Hilbert spaces, and $u: F\otimes E\to E\otimes F$ is any unitary operator. If we define $X(0,0)= \mathbb{C}$ and $X(m,n)=E^{\otimes m}\otimes F^{\otimes n}$, then it is easy to see that the coisometries $U^X_{(i,j),(k,l)} = W^u_{(i,j),(k,l)}$ give $X=\{X(m,n)\}_{(m,n)\in \mathbb{N}^2}$ a standard subproduct system structure. Since the coisometries of $X$ are all unitary, we will call it a product system. Moreover, it is clear that any standard subproduct system which is a product system (i.e.~its coisometries are unitary), must be built in this way. Thus, the structure of a product system over $\mathbb{N}^2$ is encoded in a triple $(E,F,u)$. After recalling Lemma \ref{finite-gen}, it is straightforward to verify that if $X$ is a product system with a commutation relation $u$, then $\mathcal{A}_X$ is nothing but $\mathcal{A}_u$ in the sense of \cite{solel-unitary}. Indeed, we will adopt the notation $\mathcal{A}_u$ for product systems, to distinguish them from general tensor algebras of subproduct systems.

Recall that the character space of a unital Banach algebra has an inherent weak-* topology, under which it is compact. Hence, we treat the character space $\mathcal{M}(\mathcal{A}_u)$ as a compact topological space. We will identify it with a certain subset of a Euclidean space, which is an intersection of an algebraic variety with a unit ball of a certain norm. Since we will keep dealing with such objects later on, it will be convenient to introduce the following notation.
\begin{defi}
Given a set of complex polynomials $I\subset \mathbb{C}[z_1,\ldots,z_m,w_1,\ldots, w_n]$, we denote the following subset of $\mathbb{C}^m\times \mathbb{C}^n$:
$$\Omega^{m,n}(I):= \left\{(z,w)\in \mathbb{C}^m\times \mathbb{C}^n\::\: p(z,w)=0,\,\forall p\in I,\:\|z\|,\|w\|\leq 1\right\}$$
where $\|\cdot\|$ is the Euclidean norm. We will call $\Omega^{m,n}(I)$ the \textit{polyball variety} of $I$.
\end{defi}
\textit{Remark:} Of course, like in the setting of usual algebraic varieties, if $\langle I \rangle$ is the ideal in the ring of polynomials which is generated by $I$, then $\Omega^{m,n}(I)= \Omega^{m,n}(\langle I\rangle)$.\\ \\
What follows is a digest of \cite[Proposition 3.1]{solel-unitary}. Suppose $X=\{E^{\otimes i}\otimes F^{\otimes j}\}_{(i,j)\in \mathbb{N}^2}$ is a product system given by the unitary $u: F\otimes E\to E\otimes F$. Let $\{e_i\}_{i=1}^m\subset E$ and $\{f_j\}_{j=1}^n\subset F$ be orthonormal bases, and let $\left(u_{(i,j),(k,l)}\right)\in M_{mn}(\mathbb{C})$ be the representing matrix of $u$ relative to the bases $\{f_l \otimes e_k\}$ and $\{e_i \otimes f_j\}$. For all $1\leq i\leq m$ and $1\leq j\leq n$ we define the polynomial
$$p_{i,j}(z_1,\ldots, z_m, w_1,\ldots, w_n) = w_jz_i- \sum_{k=1}^m\sum_{l=1}^n u_{(k,l),(i,j)}z_kw_l $$
\begin{prop}\label{prod-char}
With the above notations: \\
\textbf{(a)} There exists a homeomorphism $\Psi: \mathcal{M}(\mathcal{A}_u)\to \Omega^{m,n}\left(\{p_{i,j}\}_{i,j=1}^{m,n}\right)$ which is given by
$$\Psi(\alpha) = \left(\alpha\left(L^{(1,0)}_{e_1}\right),\ldots, \alpha\left(L^{(1,0)}_{e_m}\right), \alpha\left( L^{(0,1)}_{f_1}\right),\ldots,\alpha\left(L^{(0,1)}_{f_n}\right)\right)$$
where $\left\{L^{(1,0)}_{e_i}, L^{(0,1)}_{f_j}\right\}\in \mathcal{A}_u$ are the generators of the tensor algebra. \\
\textbf{(b)} If $\Psi(\alpha) = (z,w)\in \mathbb{C}^m\times \mathbb{C}^n$ and $\|z\|,\|w\|<1$, then there exists a vector $w_\alpha\in \mathcal{F}(E,F)$ such that $\alpha(T) = \left\langle T \frac{w_\alpha}{\|w_\alpha\|},\frac{w_\alpha}{\|w_\alpha\|}\right\rangle$ for all $T\in\mathcal{A}_u$. Also, for all $x\in E^{\otimes i}\otimes F^{\otimes j}$, we have $\alpha\left(L^{(i,j)}_x\right)=\langle x, w_\alpha\rangle$.

\end{prop}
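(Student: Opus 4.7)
The plan is to show that $\Psi$ is a continuous bijection from the compact Hausdorff space $\mathcal{M}(\mathcal{A}_u)$ onto the compact set $\Omega^{m,n}(\{p_{i,j}\})\subset\mathbb{C}^{m+n}$, whence it is automatically a homeomorphism; part (b) falls out along the way, since the explicit vectors $w_\alpha$ are exactly what produces surjectivity at interior points. Continuity of $\Psi$ is immediate from the weak-$\ast$ topology on $\mathcal{M}(\mathcal{A}_u)$, and injectivity follows from the Corollary to Lemma \ref{finite-gen}: $\mathcal{A}_u$ is generated by $\{L_{e_i}^{(1,0)}\}$ and $\{L_{f_j}^{(0,1)}\}$, so $\alpha$ is determined by its values on them. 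To see that $\Psi(\alpha)\in\Omega^{m,n}(\{p_{i,j}\})$, apply $\alpha$ to the product-system relation $L_{f_j}L_{e_i}=\sum_{k,l}u_{(k,l),(i,j)}L_{e_k}L_{f_l}$, which yields $p_{i,j}(\Psi(\alpha))=0$. For the norm bound on $z$, note that on $\mathcal{F}(E,F)$ the operators $\{L_{e_i}\}$ have pairwise orthogonal ranges and are each isometric, so $\|\sum a_iL_{e_i}\|=\|a\|$ for every $a\in\mathbb{C}^m$; applying the norm-one functional $\alpha$ gives $|\sum a_iz_i|\le\|a\|$, hence $\|z\|\le 1$ by duality, and analogously $\|w\|\le 1$.

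For surjectivity at $(z,w)\in\Omega^{m,n}(\{p_{i,j}\})$ with $\|z\|,\|w\|<1$, introduce $\bar z:=\sum_i\bar z_i e_i\in E$ and $\bar w:=\sum_j\bar w_j f_j\in F$, and form
\[w_\alpha := \sum_{p,q\ge 0}\bar z^{\otimes p}\otimes\bar w^{\otimes q}\in\mathcal{F}(E,F),\]
which converges since $\sum_{p,q}\|z\|^{2p}\|w\|^{2q}=(1-\|z\|^2)^{-1}(1-\|w\|^2)^{-1}<\infty$. I would then verify that $w_\alpha$ is a joint eigenvector of the adjoints of the generators, with $L_{e_i}^\ast w_\alpha=\bar z_iw_\alpha$ and $L_{f_j}^\ast w_\alpha=\bar w_jw_\alpha$. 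Granting this, $\alpha(T):=\langle Tw_\alpha,w_\alpha\rangle/\|w_\alpha\|^2$ is multiplicative on the generators, hence a character with $\Psi(\alpha)=(z,w)$; part (b) then follows because $L_x\Omega=x$ and $\langle\Omega,w_\alpha\rangle=1$ give $\langle x,w_\alpha\rangle=\langle L_x\Omega,w_\alpha\rangle=\langle\Omega,L_x^\ast w_\alpha\rangle=\alpha(L_x)\langle\Omega,w_\alpha\rangle=\alpha(L_x)$. For a boundary point with $\|z\|=1$ or $\|w\|=1$, use that each $p_{i,j}$ is homogeneous of degree two to conclude that $\Omega^{m,n}(\{p_{i,j}\})$ is closed under scalar multiplication by the closed disk; then $((1-\tfrac1n)z,(1-\tfrac1n)w)$ lies in the interior and the corresponding characters admit a weak-$\ast$ accumulation point $\alpha$ with $\Psi(\alpha)=(z,w)$, by continuity of $\Psi$.

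The main obstacle is the eigenvector identity $L_{f_j}^\ast w_\alpha=\bar w_jw_\alpha$. In contrast to $L_{e_i}$, which is genuine free creation on the left of the $E$-block so that $L_{e_i}^\ast w_\alpha=\bar z_iw_\alpha$ follows directly from slot-projection, the operator $L_{f_j}$ places $f_j$ \emph{in front of} an already-present $E$-block through the iterated commutation $u^{(1,p)}$. Computing $L_{f_j}^\ast$ on $\bar z^{\otimes p}\otimes\bar w^{\otimes q}$ therefore entangles all the matrix coefficients $u_{(k,l),(i,j)}$, and the polynomial identities $p_{i,j}(z,w)=0$ are exactly what forces the accumulated scalars to collapse into a single factor $\bar w_j$ on each $(p,q)$-slice. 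An induction on $p$ using the recursive definition of $u^{(1,p)}$ (invoking each $p_{i,j}(z,w)=0$ once at the inductive step) carries this out; the remainder of the argument is then formal.
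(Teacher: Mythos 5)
The paper contains no proof of this proposition: it is stated explicitly as a ``digest'' of \cite[Proposition 3.1]{solel-unitary}, so the only thing to compare your argument against is the imported result itself. Your reconstruction is, in substance, the standard coherent-state argument (and the one used by Power and Solel): continuity, injectivity and the containment $\Psi(\mathcal{M}(\mathcal{A}_u))\subset\Omega^{m,n}(\{p_{i,j}\})$ are formal; surjectivity at interior points comes from exhibiting $w_\alpha$ as a joint eigenvector of the adjoints of the generators; boundary points follow from the $(m,n)$-homogeneity of degree $(1,1)$ of the $p_{i,j}$ together with weak-$\ast$ compactness; and a continuous bijection between compact Hausdorff spaces is a homeomorphism. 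The one computation carrying real weight, which you correctly isolate, is $L_{f_j}^{\ast}w_\alpha=\overline{w_j}\,w_\alpha$, and it does go through: the relations $p_{k,l}(z,w)=0$ for all $k,l$ are precisely equivalent to the single identity $u(\overline{w}\otimes\overline{z})=\overline{z}\otimes\overline{w}$ for $\overline{z}=\sum_i\overline{z_i}e_i$ and $\overline{w}=\sum_j\overline{w_j}f_j$, and once this is known the identity $u^{(1,p)}(\overline{w}\otimes\overline{z}^{\otimes p})=\overline{z}^{\otimes p}\otimes\overline{w}$ follows from the recursive definition of $u^{(1,p)}$ with no further input (each factor in the composition transposes $\overline{w}$ past one copy of $\overline{z}$), so your induction is slightly over-engineered but correct; taking adjoints slot by slot then yields the eigenvector identity on each $(p,q)$-summand. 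If you write this out in full, record two small points: multiplicativity of $\alpha(T)=\langle Tw_\alpha,w_\alpha\rangle/\|w_\alpha\|^2$ should be verified on words in the generators and then extended by bilinearity and norm-continuity, and the unnormalized formula $\alpha(L^{(i,j)}_x)=\langle x,w_\alpha\rangle$ in part (b) is consistent with the normalized vector-state formula exactly because $\langle\Omega,w_\alpha\rangle=1$, as you observe.
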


\subsection{Fourier coefficients}
The additional tool we will need is the ability to decompose operators in a tensor algebra of a subproduct system via their Fourier coefficients. This tool was applied in various frameworks of operator algebras.  What we are about to define here is very similar to well known constructions. Therefore, we feel confident to skip the bulk of the proofs,
and to depict only the results we need. For more reference, we will mention only \cite{solel-unitary}, in which Fourier coefficients were defined for product systems tensor algebras $\mathcal{A}_u$, and \cite{solel-orr}, in which the same theory was meticulously developed for subproduct systems over $\mathbb{N}$.

Suppose $X$ is a subproduct system, and let $p_{(i,j)}\in B(\mathcal{F}_X)$ be the projection onto $X(i,j)$. Then, $U_{s,t}:= \sum_{k,l=0}^\infty e^{i(ks+lt)}p_{(k,l)}$ defines a two-parameter unitary group in $B(\mathcal{F}_X)$. Next, we define for all $k\geq0$, and $T\in\mathcal{A}_X$,
$$\widetilde{\Phi}_k(T): = \frac1{2\pi}\int_0^{2\pi} e^{-ikt}U_{t,t} TU_{t,t}^\ast \,dt$$ 
It can be checked that in this manner each operator $T\in\mathcal{A}_X$ defines a sequence of operators $\left\{\widetilde{\Phi}_k(T)\right\}_{k=0}^\infty\subset \mathcal{A}_X$ such that for all $k$, $\widetilde{\Phi}_k(T) = \sum_{i=0}^k L^{(i,k-i)}_{\eta_{i,k-i}}$ for some $\left\{\eta_{i,k-i}\in X(i,k-i)\right\}_{i=0}^k$. This sequence can be seen as the components of $T$, because the series $\sum\widetilde{\Phi}_k(T)$ Ces\`{a}ro-converges to $T$. That is, we have the norm limit
$$\lim_{p\to\infty}\sum_{k\leq p}\left(1-\frac{k}p\right)\widetilde{\Phi}_k(T)=T$$
We note that as operators on $\mathcal{A}_X$, $\widetilde{\Phi}_k$ are all linear and completely contractive.

For our purposes, it will also be convenient to work with another set of Fourier coefficients. For each $T\in\mathcal{A}_X$, $k,l\geq0$ we define
$$\Phi_{k,l}(T):= \frac1{4\pi^2}\iint_{[0,2\pi]\times[0,2\pi]} e^{-i(ks+lt)}U_{s,t} TU_{s,t}^\ast \,ds\,dt$$
We then get that $\Phi_{k,l}(T)= L^{(k,l)}_{\scriptstyle{\eta_{\scriptstyle{k,l}}}}$, where $\{\eta_{k,l}\}$ are the vectors described above, and so, $\widetilde{\Phi}_k(T) = \sum_{i=0}^k \Phi_{i,k-i}(T)$. Thus, we still retain the property that $T\in \overline{span}\{\Phi_{i,j}(T)\::\: i,j\geq0\}$. It is also possible to show that for all $T\in\mathcal{A}_X$, and all $i,j\geq0$, we have the weak convergence
$$\sum_{k,l=0}^\infty p_{(k+i, l+j)}Tp_{(k,l)} = \Phi_{i,j}(T)$$

\subsection{The character space}
Suppose $X$ is a standard subproduct system, $E=X(1,0)$, $F=X(0,1)$, and $u$ is a commutation relation for $X$. Then, $\mathcal{F}_X$ is a subspace of $\mathcal{F}(E,F)$. First, we want to show that $\mathcal{A}_X$ has a tight connection to $\mathcal{A}_u$. Namely, the compression of $\mathcal{A}_u\subset B(\mathcal{F}(E,F))$ onto the subspace $\mathcal{F}_X$ serves as a norm-continuous homomorphism from $\mathcal{A}_u$ into $\mathcal{A}_X$, whose image is dense. This connection will allow us to deduce the description of $\mathcal{M}(\mathcal{A}_X)$ from that of $\mathcal{M}(\mathcal{A}_u)$.

\begin{lem}
The subspace $\mathcal{F}_X$ is co-invariant for $\mathcal{A}_u$.
\end{lem}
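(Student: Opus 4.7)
The plan is to reduce co-invariance of $\mathcal{F}_X$ under the full tensor algebra $\mathcal{A}_u$ to co-invariance under its generators. Since $\mathcal{A}_u$ is the norm-closed algebra generated by the two families $\{L^{(1,0)}_e\}_{e\in E}$ and $\{L^{(0,1)}_f\}_{f\in F}$ (by the corollary to Lemma \ref{finite-gen} applied to the product system), and since the orthogonal projection $P_{\mathcal{F}_X}$ commutes with taking products, it suffices to check that $(L^{(1,0)}_e)^\ast$ and $(L^{(0,1)}_f)^\ast$ each leave $\mathcal{F}_X$ invariant. Equivalently, it is enough to prove these two statements for every $(k,l)\in \mathbb{N}^2$:
\begin{equation*}
(L^{(1,0)}_e)^\ast\bigl(X(k+1,l)\bigr)\subset X(k,l),\qquad (L^{(0,1)}_f)^\ast\bigl(X(k,l+1)\bigr)\subset X(k,l).
\end{equation*}

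The key inputs will be the two projection inequalities established at the end of Proposition \ref{standard-2}. Specialising the first inequality to $(i,j)=(1,0)$ and using that $W^u_{(1,0),(k,l)}$ is the natural identification $E\otimes E^{\otimes k}\otimes F^{\otimes l}\cong E^{\otimes(k+1)}\otimes F^{\otimes l}$, we obtain $p_{(k+1,l)}\leq I_E\otimes p_{(k,l)}$, so $X(k+1,l)\subset E\otimes X(k,l)$. The map $L^{(1,0)}_e$ acts on $X$ as $\xi\mapsto e\otimes \xi$, hence its adjoint acts by contracting the first $E$-slot against $e$; writing any $\xi\in X(k+1,l)$ as $\sum_i e'_i\otimes \eta_i$ with $\eta_i\in X(k,l)$ gives $(L^{(1,0)}_e)^\ast\xi=\sum_i\langle e'_i,e\rangle\eta_i\in X(k,l)$.

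For the second generator the argument is analogous but uses the nontrivial braiding, which will be the only real obstacle. Specialising the second inequality of Proposition \ref{standard-2} to $(i,j)=(0,1)$ yields
\begin{equation*}
p_{(k,l+1)} \leq W^u_{(0,1),(k,l)}\bigl(I_F\otimes p_{(k,l)}\bigr)\bigl(W^u_{(0,1),(k,l)}\bigr)^\ast,
\end{equation*}
so $(W^u_{(0,1),(k,l)})^\ast\bigl(X(k,l+1)\bigr)\subset F\otimes X(k,l)$. Since on the product system $L^{(0,1)}_f$ acts by $\xi\mapsto W^u_{(0,1),(k,l)}(f\otimes\xi)$, its adjoint factors through $(W^u_{(0,1),(k,l)})^\ast$ followed by contraction of the initial $F$-slot against $f$. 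Writing $(W^u_{(0,1),(k,l)})^\ast\xi=\sum_i f'_i\otimes \mu_i$ with $\mu_i\in X(k,l)$ then gives $(L^{(0,1)}_f)^\ast\xi=\sum_i\langle f'_i,f\rangle\mu_i\in X(k,l)$, as required.

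Finally, a boundary check is needed: when $k=0$ or $l=0$ the roles of the missing factors collapse (for example, $(L^{(1,0)}_e)^\ast$ annihilates $X(0,l)$, and similarly on the other side), so the invariance extends without incident to all fibres. Assembling the two generator-level statements, closing under products and norm limits, and taking adjoints gives $\mathcal{A}_u^\ast\mathcal{F}_X\subset \mathcal{F}_X$, which is precisely co-invariance. I expect the only subtlety worth writing out carefully is the identification of $(L^{(0,1)}_f)^\ast$ in terms of $(W^u_{(0,1),(k,l)})^\ast$, since everything else is mechanical given Proposition \ref{standard-2}.
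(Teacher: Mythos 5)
Your argument is correct and is essentially the paper's proof in dual form: the paper shows the generators $L^{(1,0)}_e$, $L^{(0,1)}_f$ map $\mathcal{F}_X^\perp$ into itself using the projection inequalities of Proposition \ref{standard-2}, while you show their adjoints preserve $\mathcal{F}_X$, which is the same statement after taking orthocomplements. One small citation slip: the containment $X(k+1,l)\subset E\otimes X(k,l)$ follows from the \emph{second} inequality of Proposition \ref{standard-2} specialised to $(i,j)=(1,0)$ (the first degenerates to $p_{(k+1,l)}\leq I$ because $p_{(1,0)}=I_E$), but the conclusion you draw from it is exactly the one needed.
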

\begin{proof}
Denote $p_{(m,n)}$ the orthogonal projection from $E^{\otimes m}\otimes F^{\otimes n}$ to $X(m,n)$. 
Take $x\in (E^{\otimes m}\otimes F^{\otimes n})\ominus X(m,n)$. By Lemma \ref{finite-gen}, $\mathcal{A}_u$ is generated by the identity operator and operators of the form $\{L^{(1,0)}_e, L^{(0,1)}_f\}$. Therefore, it is sufficient to show that $L^{(1,0)}_e(x)\perp X(m+1,n)$ and $L^{(0,1)}_f(x)\perp X(m,n+1)$, for all $e\in E, f\in F$. Indeed, since $p_{(m,n)}x = 0$, we have $ (I_E\otimes p_{(m,n)})(e\otimes x) =  (I_F\otimes p_{(m,n)})(f\otimes x) = 0$. So, by the inequalities in Proposition \ref{standard-2} we must have 
$$p_{(m+1,n)}L^{(1,0)}_e(x) = p_{(m+1,n)}W^u_{(1,0),(m,n)}(e\otimes x) = 0$$
$$p_{(m,n+1)}L^{(0,1)}_f(x) = p_{(m,n+1)}W^u_{(0,1),(m,n)}(f\otimes x) = 0$$

\end{proof}
The above lemma shows that the compression mapping $\theta: \mathcal{A}_u\to B(\mathcal{F}_X)$ of operators in $B(\mathcal{F}(E,F))$ onto the subspace $\mathcal{F}_X\subset \mathcal{F}(E,F)$, is a homomorphism. 
Recall, that for all $x\in E^{\otimes i}\otimes F^{\otimes j}$, and $y\in E^{\otimes k}\otimes F^{\otimes l} $, we have $L^{(i,j)}_x(y) = W^u_{(i,j),(k,l)}(x\otimes y)$. So, it can be easily seen from the definition of a standard subproduct system, that $\theta\left(L^{(i,j)}_x\right)= \widehat{L}^{(i,j)}_x$, where $\widehat{L}^{(i,j)}_x$ will be the ad-hoc notation for creation operators in $\mathcal{A}_X$. In other words, the generators of $\mathcal{A}_u$ as a norm-closed algebra are mapped by $\theta$ to the generators of $\mathcal{A}_X$. Therefore, we can conclude that $\theta(\mathcal{A}_u)\subset \mathcal{A}_X$ and it is norm-dense there.

\begin{prop}\label{ker-compress}
Suppose $X$ is a standard subproduct system, with $E=X(1,0)$, $F=X(0,1)$ and $u$ is a commutation relation for $X$. Suppose $\theta:\mathcal{A}_u\to \mathcal{A}_X$ is the compression homomorphism described above. Then, the ideal $\ker \theta \subset \mathcal{A}_u$ is the norm-closure of 
$$span\bigcup_{i,j\geq 0}\left\{L^{(i,j)}_x\::\: x\in E^{\otimes i}\otimes F^{\otimes j}\ominus X(i,j)\right\}$$

\end{prop}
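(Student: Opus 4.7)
My plan is to prove the two inclusions separately, with the easy direction using the projection inequality from Proposition~\ref{standard-2}, and the harder direction using Fourier coefficients together with a precise computation of how $\theta$ acts on creation operators in $\mathcal{A}_u$.

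For the inclusion $\overline{J} \subset \ker\theta$, where $J$ denotes the linear span appearing in the statement, I would take $x \in E^{\otimes i}\otimes F^{\otimes j}\ominus X(i,j)$ and compute the compression directly. For $y \in X(k,l)$, the inequality
$$p_{(i+k,j+l)}\leq W^u_{(i,j),(k,l)}(p_{(i,j)}\otimes I)(W^u_{(i,j),(k,l)})^\ast$$
from Proposition~\ref{standard-2} is equivalent to $p_{(i+k,j+l)}W^u_{(i,j),(k,l)} = p_{(i+k,j+l)}W^u_{(i,j),(k,l)}(p_{(i,j)}\otimes I)$, and since $p_{(i,j)}x = 0$ this compressed action vanishes on each $X(k,l)$. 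Thus $L^{(i,j)}_x\in\ker\theta$, and since $\ker\theta$ is norm-closed we obtain $\overline{J}\subset\ker\theta$.

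For the reverse inclusion, the key observation is that $\theta$ intertwines the Fourier coefficient maps on $\mathcal{A}_u$ and $\mathcal{A}_X$. Indeed, the projection $P:\mathcal{F}(E,F)\to\mathcal{F}_X$ is block-diagonal with respect to the $(m,n)$-grading, so it commutes with the unitary group $U_{s,t}=\sum_{k,l} e^{i(ks+lt)}p_{(k,l)}$. Hence $\theta(U_{s,t}TU_{s,t}^\ast)=U^{\mathcal{F}_X}_{s,t}\theta(T)(U^{\mathcal{F}_X}_{s,t})^\ast$, which yields $\Phi^{\mathcal{A}_X}_{i,j}\circ\theta = \theta\circ\Phi^{\mathcal{A}_u}_{i,j}$ by exchanging integration and the bounded map $\theta$. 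In particular, if $T\in\ker\theta$, then every $\Phi^{\mathcal{A}_u}_{i,j}(T)$ also lies in $\ker\theta$.

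Next I would identify these Fourier coefficients explicitly. Since $X$ is a product system when viewed with underlying fibers $E^{\otimes i}\otimes F^{\otimes j}$, we have $\Phi^{\mathcal{A}_u}_{i,j}(T)=L^{(i,j)}_{\eta_{i,j}}$ for some $\eta_{i,j}\in E^{\otimes i}\otimes F^{\otimes j}$. The central computation is that for any $\eta\in E^{\otimes i}\otimes F^{\otimes j}$,
$$\theta\bigl(L^{(i,j)}_\eta\bigr)=\widehat{L}^{(i,j)}_{p_{(i,j)}\eta},$$
which follows from the same identity $p_{(i+k,j+l)}W^u_{(i,j),(k,l)}=p_{(i+k,j+l)}W^u_{(i,j),(k,l)}(p_{(i,j)}\otimes I)$ used above. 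Combined with Lemma~\ref{norm-eq} applied to $X$, which gives $\|\widehat{L}^{(i,j)}_{p_{(i,j)}\eta_{i,j}}\|=\|p_{(i,j)}\eta_{i,j}\|$, the relation $\theta(L^{(i,j)}_{\eta_{i,j}})=0$ forces $p_{(i,j)}\eta_{i,j}=0$, so $\eta_{i,j}\in E^{\otimes i}\otimes F^{\otimes j}\ominus X(i,j)$. Thus every Fourier coefficient of $T$ lies in $J$, and since the Ces\`aro sums $\sum_{k\leq p}(1-k/p)\widetilde{\Phi}_k(T)=\sum_{k\leq p}(1-k/p)\sum_{i+j=k}\Phi_{i,j}(T)$ are finite linear combinations of elements of $J$ and converge in norm to $T$, we conclude $T\in\overline{J}$.

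The main technical obstacle I anticipate is establishing the formula $\theta(L^{(i,j)}_\eta)=\widehat{L}^{(i,j)}_{p_{(i,j)}\eta}$ cleanly from the projection inequality; everything else (the intertwining with Fourier maps, the Ces\`aro convergence, and the norm equality) is then straightforward bookkeeping.
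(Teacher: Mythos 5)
Your proposal is correct and follows essentially the same route as the paper: the projection inequality of Proposition~\ref{standard-2} for the easy inclusion, and the Fourier-coefficient machinery (with $\theta$ intertwining the coefficient maps) to reduce the hard inclusion to single creation operators. The only cosmetic difference is that where you identify $\theta\bigl(L^{(i,j)}_{\eta}\bigr)=\widehat{L}^{(i,j)}_{p_{(i,j)}\eta}$ and invoke Lemma~\ref{norm-eq}, the paper simply evaluates $L^{(i,j)}_x$ at the vacuum vector $\Delta$ to conclude $x\perp X(i,j)$; both are valid.
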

\begin{proof}
For all $i,j\geq0$, denote $K_{i,j}= \ker\theta\cap \left\{L^{(i,j)}_x\::\: x\in E^{\otimes i}\otimes F^{\otimes j}\right\}$. First, we will prove that $\ker \theta$ is the norm-closure of $span\bigcup_{i,j\geq0} K_{i,j}$.
Suppose $T\in \ker \theta$ is given. Then, $\theta\left(U_{s,t}TU_{s,t}^\ast\right)=0$ (when $\theta$ is seen as a function on the whole $B(\mathcal{F}(E,F))$), and so, we see from its defining integral that for all $i,j\geq0$, $\Phi_{i,j}(T)$ is in $\ker \theta$. Thus, we clearly get that $\Phi_{i,j}(T)\in K_{i,j}$. But, since $T$ is in the closed span of its Fourier coefficients, we get that $T$ is in the closure of $span\bigcup_{i,j\geq0} K_{i,j}$.

Now, we need to prove that $K_{i,j} = \left\{L^{(i,j)}_x\::\: x\in E^{\otimes i}\otimes F^{\otimes j}\ominus X(i,j)\right\}$. Suppose $L^{(i,j)}_x\in K_{i,j}$. Then, since $L^{(i,j)}_x\in \ker \theta$, for the unit vector $\Delta= 1\in\mathbb{C}= E^{\otimes 0}\otimes F^{\otimes 0}\subset \mathcal{F}_X$, we get $x = L^{(i,j)}_x(\Delta)\in \mathcal{F}_X^\perp$. Hence, $x\in E^{\otimes i}\otimes F^{\otimes j}\ominus X(i,j)$.

For the reverse inclusion, suppose $x\in E^{\otimes i}\otimes F^{\otimes j}\ominus X(i,j)$. Then, for all $y\in E^{\otimes k}\otimes F^{\otimes l}$, $(p_{(i,j)}\otimes I)(x\otimes y)=0$, where $p_{(i,j)}\in B(E^{\otimes i}\otimes F^{\otimes j})$ is the projection on $X(i,j)$. So, by the inequalities in Proposition \ref{standard-2}, we have
$$L^{(i,j)}_x(y) = W^u_{(i,j),(k,l)}(x\otimes y) \in E^{\otimes i+k}\otimes F^{\otimes j+l}\ominus X(i+j,k+l)\subset \mathcal{F}_X^\perp$$
Hence, $L^{(i,j)}_x\in \ker\theta$.

\end{proof}

Suppose $\dim E = m$ and $\dim F = n$. We are about to show that $\mathcal{M}(\mathcal{A}_X)$ is homeomorphic to a certain polyball variety in $\mathbb{C}^m\times \mathbb{C}^n$. Moreover, this polyball variety will have a certain homogeneity property, for which we will need the following concept.

Given a vector $\overline{z}=(z_1,\ldots,z_p)\in\mathbb{C}^p$, and a multi-index $k = (k_1,\ldots, k_p)\in\mathbb{N}^p$, we adopt the notation $\overline{z}^k= z_1^{k_1}\cdots z_p^{k_p}$ for the corresponding monomial. Also, we denote $|k|=\sum_{i=1}^p k_p$. 
\begin{defi}
A complex polynomial $p\in \mathbb{C}[z_1,\ldots, z_m,w_1,\ldots, w_n]$ will be called \textit{$(m,n)$-homogeneous} of degree $(d_m,d_n)$, if it is of the form 
$$p(\overline{z},\overline{w}) = p(z_1,\ldots,w_n) = \sum_{|k|=d_m,\,|l|=d_n} a_{k,l}\overline{z}^k\overline{w}^l$$
An ideal $J\subset \mathbb{C}[z_1,\ldots, z_m,w_1,\ldots, w_n]$ will be called \textit{$(m,n)$-homogeneous}, if it is generated by $(m,n)$-homogeneous polynomials. A polyball variety $\Omega^{m,n}(J)$ we be called \textit{homogeneous}, if $J$ is $(m,n)$-homogeneous.
\end{defi}

Given fixed orthonormal bases $\{e_i\}_{i=1}^m$ of $E$, and $\{f_j\}_{j=1}^n$ of $F$, we assign to each vector $x\in E^{\otimes i}\otimes F^{\otimes j}$ a complex $(m,n)$-homogeneous polynomial of degree $(i,j)$, in a natural manner: Given 
$$x = \sum_{s_1,\ldots,s_i=1}^m\sum_{t_1,\ldots,t_j=1}^n a_{s_1,\ldots,s_i}^{t_1,\ldots, t_j} e_{s_1}\otimes\cdots\otimes e_{s_i}\otimes f_{t_1}\otimes\cdots\otimes f_{t_j}$$
let $q^x$ be the polynomial defined by the formula
$$q^x(z_1,\ldots,z_m, w_1,\ldots,w_n) = \sum_{s_1,\ldots,s_i=1}^m\sum_{t_1,\ldots,t_j=1}^n a_{s_1,\ldots,s_i}^{t_1,\ldots, t_j} z_{s_1}\cdots z_{s_i}w_{t_1}\cdots w_{t_j}$$

\textit{Remark:} The transformation $x\mapsto q^x$ implements a passage from a non-commutative domain to a commutative one. Hence, it may occur that a non-zero vector $x$ will yield the zero polynomial $q^x\equiv 0$. For example, if $x= e_1\otimes e_2 - e_2\otimes e_1 \not=0$, then $q^x = z_1z_2-z_2z_1\equiv 0$.

Finally, we are ready to prove the principal result of this section.

\begin{thm}\label{chars}
Suppose $X$ is a standard subproduct system, with $E=X(1,0)$, $F=X(0,1)$, $\dim E = m$, $\dim F = n$, and $u$ is a commutation relation for $X$. Then, there exists a $(m,n)$-homogeneous ideal of polynomials $J$, such that the character space $\mathcal{M}(\mathcal{A}_X)$ (equipped with the weak-* topology) is homeomorphic to the homogeneous polyball variety $\Omega^{m,n}(J)$.\\
More specifically, if $\{p_{i,j}\}_{i,j=1}^{m,n}$ are the polynomials associated with $u$, relative to some choice of orthonormal bases for $E$ and $F$, then $J$ is the ideal generated by the set
$$\{p_{i,j}\}_{i,j=1}^{m,n}\cup \bigcup_{i,j\geq0}\left\{q^x\::\: x\in E^{\otimes i}\otimes F^{\otimes j}\ominus X(i,j)\right\}$$
where $q^x$ is the polynomial associated with the vector $x$, relative to the same choice of bases.
\end{thm}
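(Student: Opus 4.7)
The plan is to realize the desired homeomorphism as the composition $\Psi\circ\theta^{\ast}$, where $\theta^{\ast}$ is the dual of the compression homomorphism $\theta\colon\mathcal{A}_u\to\mathcal{A}_X$ supplied by Proposition \ref{ker-compress}, and $\Psi\colon\mathcal{M}(\mathcal{A}_u)\to\Omega^{m,n}(\{p_{i,j}\})$ is the homeomorphism of Proposition \ref{prod-char}. First I would verify that precomposition with $\theta$ yields a weak-$\ast$ continuous injection $\theta^{\ast}\colon\mathcal{M}(\mathcal{A}_X)\hookrightarrow\mathcal{M}(\mathcal{A}_u)$ (injectivity follows from density of $\theta(\mathcal{A}_u)$ in $\mathcal{A}_X$), whose image is exactly the set of characters $\beta$ of $\mathcal{A}_u$ that vanish on $\ker\theta$: such a $\beta$ factors through $\mathcal{A}_u/\ker\theta$, which is isometric to a dense subalgebra of $\mathcal{A}_X$, and hence extends uniquely to a character of $\mathcal{A}_X$. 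Since the domain is compact and the target Hausdorff, this gives a homeomorphism onto its image.

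The next step is to translate vanishing on $\ker\theta$ into a polynomial condition. Proposition \ref{ker-compress} expresses $\ker\theta$ as the closed linear span of the operators $L^{(i,j)}_x$ for $x\in E^{\otimes i}\otimes F^{\otimes j}\ominus X(i,j)$, so by continuity and linearity of characters, $\beta|_{\ker\theta}=0$ iff $\beta(L^{(i,j)}_x)=0$ for every such $x$. To evaluate $\beta(L^{(i,j)}_x)$ in terms of $(z,w):=\Psi(\beta)$, I would first establish the operator identity
\[
L^{(i,j)}_{e_{s_1}\otimes\cdots\otimes e_{s_i}\otimes f_{t_1}\otimes\cdots\otimes f_{t_j}}
=L^{(1,0)}_{e_{s_1}}\cdots L^{(1,0)}_{e_{s_i}}\,L^{(0,1)}_{f_{t_1}}\cdots L^{(0,1)}_{f_{t_j}}
\]
in $\mathcal{A}_u$; this rests on the observation that $W^u_{(i,0),(0,j)}$ is the identity, since there are no $F$-factors to commute past $E$-factors, so the right-hand side applied to any $z\in E^{\otimes k}\otimes F^{\otimes l}$ reduces, after unwinding the definition of $W^u$, to $W^u_{(i,j),(k,l)}\bigl((e_{s_1}\otimes\cdots\otimes f_{t_j})\otimes z\bigr)$. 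Expanding a general $x$ in the chosen product basis and applying $\beta$ then yields $\beta(L^{(i,j)}_x)=q^x(z,w)$.

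Combining the two steps, $\Psi\circ\theta^{\ast}$ maps $\mathcal{M}(\mathcal{A}_X)$ homeomorphically onto the locus of $(z,w)\in\Omega^{m,n}(\{p_{i,j}\})$ at which every $q^x$ in the stated generating set vanishes; by the remark following the definition of the polyball variety, this locus is precisely $\Omega^{m,n}(J)$ for $J$ the ideal in the statement. Finally, the $(m,n)$-homogeneity of $J$ follows at once from the explicit formulas: each $p_{i,j}$ has bidegree $(1,1)$, and each $q^x$ with $x\in E^{\otimes i}\otimes F^{\otimes j}$ has bidegree $(i,j)$. The step I expect to require the most care is the operator identity above, which is clean conceptually but demands a careful unwinding of the iterated definitions of $W^u_{(i,j),(k,l)}$ to rule out any hidden twist coming from $u$.
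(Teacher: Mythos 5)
Your overall architecture --- the map $\Psi\circ\theta^{\ast}$, the computation $\beta\bigl(L^{(i,j)}_x\bigr)=q^x(z,w)$ via the factorization of $L^{(i,j)}_x$ into degree-one creation operators, and the reduction of ``vanishing on $\ker\theta$'' to the vanishing of the generators of $J$ via Proposition \ref{ker-compress} --- is the same as the paper's, and those parts are sound, as is the closing homogeneity observation. The gap is in your identification of the image of $\theta^{\ast}$ with the set of characters of $\mathcal{A}_u$ that annihilate $\ker\theta$. The inclusion of the image in that set is immediate, but for the converse you assert that $\mathcal{A}_u/\ker\theta$ is \emph{isometric} to a dense subalgebra of $\mathcal{A}_X$, so that a character vanishing on $\ker\theta$ automatically extends. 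That isometry --- or even the weaker two-sided estimate $\|T+\ker\theta\|\leq C\,\|\theta(T)\|$ that would suffice --- is a distance-formula type statement which the paper neither proves nor uses, and which is not obvious for these norm-closed algebras; note that the paper is careful to claim only that $\mathcal{A}_X$ is \emph{continuously} isomorphic to the norm-closure of the compression of $\mathcal{A}_u$. Without such a lower bound, a multiplicative linear functional defined on the dense subalgebra $\theta(\mathcal{A}_u)$ need not be bounded in the norm of $\mathcal{A}_X$, and hence need not extend to a character of $\mathcal{A}_X$; characters of a Banach algebra are automatically bounded, but multiplicative functionals on dense non-closed subalgebras are not.

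The paper circumvents this exact difficulty by splitting the surjectivity argument. For $(z,w)\in\Omega^{m,n}(J)$ with $\|z\|,\|w\|<1$ it invokes Proposition \ref{prod-char}(b) to realize $\alpha_{(z,w)}$ as a vector state $T\mapsto\bigl\langle T\tfrac{w_{(z,w)}}{\|w_{(z,w)}\|},\tfrac{w_{(z,w)}}{\|w_{(z,w)}\|}\bigr\rangle$; the vanishing of all the $q^x$ forces $w_{(z,w)}\in\mathcal{F}_X$, so the same vector state defines a manifestly bounded character of $\mathcal{A}_X$ whose pullback under $\theta$ is $\alpha_{(z,w)}$. The boundary points of $\Omega^{m,n}(J)$ are then recovered from compactness of $\mathcal{M}(\mathcal{A}_X)$ together with density of the open-ball part of the variety. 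You should replace your quotient-isometry step with this two-stage argument, or else supply a proof of the norm estimate, which would be a genuinely new result not contained in the paper.
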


\begin{proof}
Let $\{e_i\}_{i=1}^n$, $\{f_j\}_{j=1}^m$ be a choice orthonormal bases for $E$, $F$, respectively, and denote $J$ to be the ideal defined in the statement, relative to this choice. 

Recall, (Proposition \ref{prod-char}) that $\mathcal{M}(\mathcal{A}_u)$ is homeomorphic to $\Omega^{m,n}(\{p_{i,j}\})$. We will denote $\alpha_{(z,w)}\in\mathcal{M}(\mathcal{A}_X)$ to be the character mapped to the vector $(z,w)\in\mathbb{C}^m\times \mathbb{C}^n$ under this homeomorphism. Recall, that $\alpha_{(z,w)}\left(L^{(1,0)}_{e_i}\right)=z_i$ and $\alpha_{(z,w)}\left(L^{(0,1)}_{f_j}\right)=w_j$, where $(z,w) = (z_1,\ldots,z_m,w_1,\ldots,w_n)$. From that, it is easy to obtain, that for all $x\in E^{\otimes i}\otimes F^{\otimes j}$, 
$$\alpha_{(z,w)}\left(L^{(i,j)}_x\right) = q^x(z,w)$$
As seen before, we have the (continuous) compression homomorphism $\theta:\mathcal{A}_u\to\mathcal{A}_X$. It induces a continuous mapping $\theta^\ast:\mathcal{M}(\mathcal{A}_X)\to \mathcal{M}(\mathcal{A}_u)$, given by $\theta^\ast(\alpha) = \alpha\circ\theta$ on characters $\alpha$. Since the image of $\theta$ is dense inside $\mathcal{A}_X$, each character on $\mathcal{A}_X$ is defined by its values on $\theta(\mathcal{A}_u)$. (That is because all characters on a Banach algebra are continuous.) Hence, $\theta^\ast$ is injective.

Suppose $\theta^\ast(\alpha) = \alpha_{(z,w)}$ for some $\alpha\in\mathcal{A}_X$, and suppose $x\in E^{\otimes i}\otimes F^{\otimes j}\ominus X(i,j)$ for some $i,j$. By Proposition \ref{ker-compress}, $L^{(i,j)}_x\in \ker \theta$, hence, $q^x(z,w)=\alpha_{(z,w)}\left(L^{(i,j)}_x\right)=0$. Thus, under the identification $\mathcal{M}(\mathcal{A}_u)\cong \Omega^{m,n}(\{p_{i,j}\})$, we have,
$$\theta^\ast(\mathcal{M}(\mathcal{A}_X))\subset \Omega^{m,n}(J)\subset \Omega^{m,n}(\{p_{i,j}\})$$
We are left to show that the left inclusion above is an equality. Suppose $(z,w)\in \Omega^{m,n}(J)$ such that $\|z\|,\|w\|<1$. Then, by Proposition \ref{prod-char}(b), there is a vector $w_{(z,w)}\in \mathcal{F}(E,F)$ such that $\alpha_{(z,w)}(T) = \left\langle T \frac{w_{(z,w)}}{\|w_{(z,w)}\|},\frac{w_{(z,w)}}{\|w_{(z,w)}\|}\right\rangle$ for all $T\in\mathcal{A}_u$. But, also by the same proposition, for all $x\in E^{\otimes i}\otimes F^{\otimes j}\ominus X(i,j)$, we have,
We are left to show that the left inclusion above is an equality. Suppose $(z,w)\in \Omega^{m,n}(J)$ such that $\|z\|,\|w\|<1$. Then, by Proposition \ref{prod-char}(b), there is a vector $w_{(z,w)}\in \mathcal{F}(E,F)$ such that $\alpha_{(z,w)}(T) = \left\langle T \frac{w_{(z,w)}}{\|w_{(z,w)}\|},\frac{w_{(z,w)}}{\|w_{(z,w)}\|}\right\rangle$ for all $T\in\mathcal{A}_u$. But, also by the same proposition, for all $x\in E^{\otimes i}\otimes F^{\otimes j}\ominus X(i,j)$, we have,
$$q^x\in J\quad \Rightarrow\quad \langle x, w_{(z,w)}\rangle = \alpha_{(z,w)}\left(L^{(i,j)}_x\right)=q^x(z,w) = 0$$
Thus, we must have $w_{(z,w)}\in \mathcal{F}_X$, and that means the character $\alpha_{(z,w)}$ factors through the compression $\theta$ of $\mathcal{A}_u$ onto $\mathcal{F}_X$. But, $\alpha(T):=\left\langle T \frac{w_{(z,w)}}{\|w_{(z,w)}\|},\frac{w_{(z,w)}}{\|w_{(z,w)}\|}\right\rangle$, which has just been seen to define a character on the image of $\theta$, obviously can be extended to a character on its closure $\mathcal{A}_X$. In other words, $\alpha\in \mathcal{M}(\mathcal{A}_X)$ and $\theta^\ast(\alpha) = \alpha_{(z,w)}$.

At this point, we know $\theta^\ast$ is a continuous injective mapping from $\mathcal{M}(\mathcal{A}_X)$ into the compact space $\Omega^{m,n}(J)$, and its image contains a dense subset of (namely, $\Omega^{m,n}(J)\cap (\mathbb{B}^m\times\mathbb{B}^n)$). Since $\mathcal{M}(\mathcal{A}_X)$ is compact, that must mean $\theta^\ast$ is a homeomorphism onto $\Omega^{m,n}(J)$.

\end{proof}

\section{Ideals of non-commutative polynomials}
We present another perspective on subproduct systems of finite-dimensional Hilbert spaces over $\mathbb{N}\times \mathbb{N}$, with a more algebraic flavor. Given a product system $(E,F,u)$, we would like to find all standard subproduct systems that can be embedded inside it. It turns out that one can find a parameterization of this collection of embedded subproduct systems, when looking at the algebra of complex non-commutative polynomials (with the right number of variables). In \cite{solel-orr}, such a parametrization was made for subproduct systems over $\mathbb{N}$, simply by coupling subproduct systems with ideals in the algebra of complex non-commutative polynomials. Motivated by this identification, we develop a similar tool for our setting.

Suppose $E$ and $F$ are finite dimensional Hilbert spaces, and $u:F\otimes E\to E\otimes F$ is a unitary operator. We would like to consider the \textit{algebraic} Fock space, which is the direct sum of the inner product spaces (without completion): $$\mathcal{F}^{alg}(E,F):= \bigoplus_{(m,n)\in \mathbb{N}^2}E^{\otimes m}\otimes F^{\otimes n}$$ 
The vector space $\mathcal{F}^{alg}(E,F)$, unlike the usual Fock space, can be turned into an algebra by linearly extending the formula:
$$x\cdot y:= W^u_{(i,j),(k,l)}(x\otimes y)\quad \forall\: x\in E^{\otimes i}\otimes F^{\otimes j}\,\;y\in E^{\otimes k}\otimes F^{\otimes l}$$  
By applying the remark after Proposition \ref{standard-2} on the case of a product system, one can see this indeed defines an associative multiplication.

We will call a subspace $\mathcal{M}\subset \mathcal{F}^{alg}(E,F)$ homogeneous, if $\mathcal{M}=\bigoplus_{(m,n)\in \mathbb{N}^2} \mathcal{M}_{(m,n)}$ for some subspaces $\mathcal{M}_{(m,n)}\subset E^{\otimes m}\otimes F^{\otimes n}$.

Given a standard subproduct system $X$ with $X(1,0)\subset E$ and $X(0,1)\subset F$, we can view the algebraic direct sum $\mathcal{F}^{alg}_X:= \bigoplus_{(m,n)\in \mathbb{N}^2} X(m,n)$ as a homogeneous subspace of $\mathcal{F}^{alg}(E,F)$. We will also refer to the orthogonal complement 
$$\mathcal{F}^{alg\perp}_X= \bigoplus_{(m,n)\in \mathbb{N}^2} X(m,n)^\perp\subset \mathcal{F}^{alg}(E,F)$$
where $X(m,n)^\perp= E^{\otimes m}\otimes F^{\otimes n}\ominus X(m,n)$. It turns out that in these terms, standard subproduct systems get another characterization.
\begin{prop}\label{subprod-ideals} 
A homogeneous subspace $\mathcal{M}\subset \mathcal{F}^{alg}(E,F)$ is a proper ideal, if and only if, there exists a standard subproduct system $X$ with $X(1,0)\subset E$, $X(0,1)\subset F$ such that $\mathcal{M}= \mathcal{F}^{alg \perp}_X$, and $X$ has $u$ as a commutation relation.

If exists, such a subproduct system $X$ is unique.
\end{prop}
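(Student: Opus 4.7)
The plan is to identify ideals in $\mathcal{F}^{alg}(E,F)$ with standard subproduct systems via fiberwise orthogonal complementation, using the projection inequalities of Proposition \ref{standard-2} as the dictionary between the two viewpoints. Throughout, for a homogeneous subspace $\mathcal{M} = \bigoplus_{(m,n)} \mathcal{M}_{(m,n)}$, I write $q_{(m,n)}$ for the projection onto $\mathcal{M}_{(m,n)}$ inside $E^{\otimes m}\otimes F^{\otimes n}$, $p_{(m,n)} := I - q_{(m,n)}$ for its complement, and set $X(m,n) := \mathcal{M}_{(m,n)}^\perp$.

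For the ``only if'' direction, I would start with a standard subproduct system $X$ having $u$ as commutation relation and define $\mathcal{M}_{(m,n)} := X(m,n)^\perp$. Homogeneity holds by construction, and properness is immediate from $X(0,0) = \mathbb{C}$, which forces $\mathcal{M}_{(0,0)} = 0$. To check left-ideal closure, take $x \in X(i,j)^\perp$ and $y \in E^{\otimes k}\otimes F^{\otimes l}$; the first inequality of Proposition \ref{standard-2} gives $p_{(i+k,j+l)}W^u_{(i,j),(k,l)} = p_{(i+k,j+l)}W^u_{(i,j),(k,l)}(p_{(i,j)}\otimes I)$, and applying this to $x\otimes y$ yields $p_{(i+k,j+l)}(x\cdot y) = 0$, so $x\cdot y \in \mathcal{M}$. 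Right-multiplication is handled symmetrically via the second inequality.

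The ``if'' direction is the substantive step. Given a proper homogeneous ideal $\mathcal{M}$, I set $X(m,n) := \mathcal{M}_{(m,n)}^\perp$. Properness forces $X(0,0) = \mathbb{C}$, because otherwise $1 \in \mathcal{M}_{(0,0)}$ and then $1 \cdot z = z \in \mathcal{M}$ for every $z$, contradicting $\mathcal{M} \neq \mathcal{F}^{alg}(E,F)$. The core of the argument is to recover the two projection inequalities of Proposition \ref{standard-2} from the ideal condition. The left-ideal property $W^u_{(i,j),(k,l)}(\mathcal{M}_{(i,j)} \otimes E^{\otimes k}\otimes F^{\otimes l}) \subset \mathcal{M}_{(i+k,j+l)}$ is equivalent to $p_{(i+k,j+l)}W^u_{(i,j),(k,l)}(q_{(i,j)}\otimes I) = 0$, hence to $p_{(i+k,j+l)}W^u_{(i,j),(k,l)} = p_{(i+k,j+l)}W^u_{(i,j),(k,l)}(p_{(i,j)}\otimes I)$. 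Right-multiplying by $(W^u_{(i,j),(k,l)})^\ast p_{(i+k,j+l)}$ and using unitarity of $W^u_{(i,j),(k,l)}$ gives $p_{(i+k,j+l)} = p_{(i+k,j+l)}W^u_{(i,j),(k,l)}(p_{(i,j)}\otimes I)(W^u_{(i,j),(k,l)})^\ast p_{(i+k,j+l)}$, which is the first inequality of Proposition \ref{standard-2}. The second follows analogously from the right-ideal property. With both inequalities in place, the remark following Proposition \ref{standard-2} furnishes a standard subproduct system $X$ with $u$ as commutation relation and $\mathcal{F}^{alg\perp}_X = \mathcal{M}$.

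Uniqueness is then automatic: $\mathcal{M}$ determines the fibers $X(m,n) = \mathcal{M}_{(m,n)}^\perp$, and since $u$ is prescribed, the coisometries $U^X_{(i,j),(k,l)} = p_{(i+k,j+l)}W^u_{(i,j),(k,l)}|_{X(i,j)\otimes X(k,l)}$ are determined as well. The main obstacle in the whole argument is really just setting up the dictionary between range inclusions under $W^u_{(i,j),(k,l)}$ and operator inequalities between projections; once that translation is made explicit, both directions of the biconditional reduce to the same computation read in opposite orders.
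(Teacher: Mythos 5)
Your proposal is correct and follows essentially the same route as the paper: both directions are obtained by translating the two-sided ideal condition into the two projection inequalities of Proposition \ref{standard-2} (you via operator identities such as $p_{(i+k,j+l)}W^u_{(i,j),(k,l)}(q_{(i,j)}\otimes I)=0$, the paper via the equivalent subspace inclusions $\mathcal{M}_{(i+k,j+l)}^\perp\subset W^u_{(i,j),(k,l)}(\mathcal{M}_{(i,j)}^\perp\otimes \mathcal{M}_{(k,l)}^\perp)$), with the remark after Proposition \ref{standard-2} supplying the passage from the inequalities back to a standard subproduct system, and uniqueness read off from $X(m,n)=\mathcal{M}_{(m,n)}^\perp$. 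The only blemish is that your labels ``if'' and ``only if'' are interchanged relative to the statement, which does not affect the mathematics since both implications are proved.
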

\begin{proof}
Suppose $\mathcal{M}=\bigoplus_{(m,n)\in \mathbb{N}^2} \mathcal{M}_{(m,n)}$ is a proper homogeneous ideal in the algebra $\mathcal{F}^{alg}(E,F)$. This means that for all $x\in \mathcal{M}_{(i,j)}$ and $y\in E^{\otimes k}\otimes F^{\otimes l}$, we have
$$W^u_{(i,j),(k,l)}(x\otimes y), W^u_{(k,l),(i,j)}(y\otimes x)\in \mathcal{M}_{(i+k,j+l)}$$
So, since $W^u_{(i,j),(k,l)}$ are unitary we have, 
$$\mathcal{M}_{(i+k,j+l)}^\perp\subset W^u_{(i,j),(k,l)}\left( \mathcal{M}^\perp_{(i,j)}\otimes E^{\otimes k}\otimes F^{\otimes l}\right)$$ $$\mathcal{M}_{(i+k,j+l)}^\perp\subset W^u_{(k,l),(i,j)}\left( E^{\otimes k}\otimes F^{\otimes l}\otimes \mathcal{M}^\perp_{(i,j)}\right)$$ where $\mathcal{M}^\perp_{(m,n)}= E^{\otimes m}\otimes F^{\otimes n}\ominus \mathcal{M}_{(m,n)}$. When interchanging $i\leftrightarrow k$ and $j\leftrightarrow l$ in the second inclusion above, we get that $\mathcal{M}_{(i+k,j+l)}^\perp\subset W^u_{(i,j),(k,l)}\left( \mathcal{M}^\perp_{(i,j)}\otimes \mathcal{M}^\perp_{(k,l)}\right)$. Also, $\mathcal{M}_{(0,0)}=\{0\}\;\Rightarrow\; \mathcal{M}_{(0,0)}^\perp=\mathbb{C}$ because $\mathcal{M}$ is proper. This means that $X(m,n):=\mathcal{M}^\perp_{(m,n)}$ defines a standard subproduct system with a commutation relation $u$, and $\mathcal{M}= \mathcal{F}^{alg \perp}_X$.

Conversely, given a standard subproduct system $X$ as such, by the inequalities in Proposition \ref{standard-2}, we must have
$$(E^{\otimes k}\otimes F^{\otimes l})\cdot X(i,j)^\perp,\quad X(i,j)^\perp \cdot (E^{\otimes k}\otimes F^{\otimes l}) \subset X(i+k,j+l)^\perp$$
for all $(i,j),(k,l)\in \mathbb{N}^2$, and $X(0,0)^\perp=\{0\}$. Thus, $\mathcal{F}^{alg\perp}_X = \bigoplus_{(m,n)\in\mathbb{N}^2} X(m,n)^\perp$ is a proper ideal in the algebra $\mathcal{F}^{alg}(E,F)$.

Uniqueness of $X$ is obvious, since $\mathcal{M}$ is homogeneous and $X(m,n)=\mathcal{M}^\perp_{(m,n)}$.

\end{proof}

The last proposition gives us an inclusion-reversing correspondence between the collection of standard subproduct systems related with the triple $(E,F,u)$ and the collection of ideals in a certain non-commutative finitely generated complex algebra. The next step will be to develop this correspondence further, by identifying these ideals with ideals of \textit{non-commutative polynomials} of a certain form.

To do that, we need to fix a choice of orthonormal bases $\{e_1,\ldots,e_m\}\subset E$ and  $\{f_1,\ldots, f_n\}\subset F$. Let $\mathcal{P}_{m,n} =\mathbb{C}\langle z_1,\ldots, z_m, w_1,\ldots, w_n\rangle$ denote the algebra of non-commutative complex polynomials on $m+n$ variables. We construct a unital linear homomorphism $\Phi: \mathcal{P}_{m,n}\to \mathcal{F}^{alg}(E,F)$ by defining $\Phi(z_i)= e_i\in E\subset \mathcal{F}^{alg}(E,F)$ for all $1\leq i\leq m$, and similarly $\Phi(w_j)=f_j\in F$ for all $1\leq j\leq n$. Since $\mathcal{P}_{m,n}$ is a free algebra over the generators $\{z_i,w_j\}$, $\Phi$ is well-defined.

\begin{lem}\label{hard-lemma}
Let $\left(u_{(i,j),(k,l)}\right)\in M_{mn}(\mathbb{C})$ be the representing matrix of $u$ relative to the bases $\{f_l \otimes e_k\}$ and $\{e_i \otimes f_j\}$. For all $1\leq i\leq m$ and $1\leq j\leq n$ we define the non-commutative polynomials
$$P_{i,j}= w_jz_i- \sum_{k=1}^m\sum_{l=1}^n u_{(k,l),(i,j)}z_kw_l \in \mathcal{P}_{m,n}$$
Then, $\Phi:\mathcal{P}_{m,n}\to \mathcal{F}^{alg}(E,F)$ is a surjective homomorphism whose kernel is the ideal generated by the set $\{P_{i,j}\}_{i,j=1}^{m,n}$.
\end{lem}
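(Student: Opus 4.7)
The plan is to exploit the universal property of the free algebra $\mathcal{P}_{m,n}$, verify that the ideal generated by $\{P_{i,j}\}$ lies in $\ker\Phi$ by a direct calculation, and then reduce the reverse inclusion to a normal-form argument.

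First I would observe that because $\mathcal{P}_{m,n}$ is free over the generators $\{z_i,w_j\}$, the assignment $z_i\mapsto e_i$, $w_j\mapsto f_j$ extends uniquely to a unital algebra homomorphism into $\mathcal{F}^{alg}(E,F)$. For surjectivity, I would compute $\Phi$ on monomials of the form $z_{i_1}\cdots z_{i_p}w_{j_1}\cdots w_{j_q}$; using associativity of the product on $\mathcal{F}^{alg}(E,F)$ together with the fact that the maps $W^u_{(a,0),(b,0)}$ and $W^u_{(0,a),(0,b)}$ act as the canonical rearrangements (no $F$-past-$E$ commutation takes place), such a monomial maps to $e_{i_1}\otimes\cdots\otimes e_{i_p}\otimes f_{j_1}\otimes\cdots\otimes f_{j_q}$. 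These tensors span each fiber $E^{\otimes p}\otimes F^{\otimes q}$, hence span $\mathcal{F}^{alg}(E,F)$.

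Next, let $I$ denote the two-sided ideal generated by $\{P_{i,j}\}$ in $\mathcal{P}_{m,n}$. The inclusion $I\subset\ker\Phi$ follows from $\Phi(w_jz_i)=f_j\cdot e_i=W^u_{(0,1),(1,0)}(f_j\otimes e_i)=u(f_j\otimes e_i)=\sum_{k,l}u_{(k,l),(i,j)}e_k\otimes f_l=\sum_{k,l}u_{(k,l),(i,j)}\Phi(z_kw_l)$, which gives $\Phi(P_{i,j})=0$.

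The main step is the reverse inclusion $\ker\Phi\subset I$, which I would establish by a normal-form argument. Call a monomial \emph{normal} if all its $z$-letters precede all its $w$-letters, and for a general monomial let its \emph{inversion number} be the cardinality of the set of positions $a<b$ such that position $a$ carries a $w$-letter and position $b$ carries a $z$-letter. Claim: every polynomial is congruent modulo $I$ to a linear combination of normal monomials. I would prove this by induction on the inversion number of the monomials involved: if a monomial is non-normal then it contains an adjacent factor $w_jz_i$, and applying the relation $w_jz_i\equiv\sum_{k,l}u_{(k,l),(i,j)}z_kw_l\pmod{I}$ replaces this factor by a linear combination of factors $z_kw_l$. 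The key bookkeeping is to verify that this local swap strictly decreases the global inversion count by exactly $1$: inversions internal to the prefix or suffix, and inversions pairing a letter of the prefix or suffix with one of the two swapped positions, all persist unchanged (the contributions from the two swapped positions being interchanged in a way that preserves the totals), while the pair at the swapped positions themselves transitions from $(w,z)$ to $(z,w)$.

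Having this normal-form reduction, I would close the argument as follows. By the computation in the first paragraph, $\Phi$ maps the set of normal monomials bijectively onto the tensor-product basis of $\mathcal{F}^{alg}(E,F)$, and is therefore injective on the span of normal monomials. If $p\in\ker\Phi$, choose a normal representative $p'$ with $p\equiv p'\pmod{I}$; then $\Phi(p')=\Phi(p)=0$ forces $p'=0$, so $p\in I$. The genuine obstacle is the combinatorial verification in the third paragraph that the inversion number strictly decreases under each local rewrite; apart from this, the proof is a standard Diamond-Lemma-type reduction familiar from the theory of quadratic algebras.
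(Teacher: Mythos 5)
Your proof is correct and follows essentially the same route as the paper: both arguments rest on the observation that the normal monomials $z_{i_1}\cdots z_{i_p}w_{j_1}\cdots w_{j_q}$ are mapped by $\Phi$ bijectively onto the tensor basis of $\mathcal{F}^{alg}(E,F)$, and that every polynomial is congruent modulo the ideal $I=\langle P_{i,j}\rangle$ to a linear combination of normal monomials. The paper packages the reduction step as the assertion $Q-\Psi\circ\Phi(Q)\in I$ for a linear section $\Psi$ of $\Phi$ and leaves its verification to the reader, whereas your inversion-number induction supplies precisely that missing bookkeeping --- an elaboration of the same argument rather than a different one.
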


\begin{proof}
Let $I\subset \mathcal{P}_{m,n}$ be the ideal generated by $\{P_{i,j}\}$. Let $\Psi: \mathcal{F}^{alg}(E,F)\to \mathcal{P}_{m,n}$ be the linear map defined by 
$$\Psi\left(e_{i_1}\otimes \cdots\otimes e_{i_s}\otimes f_{j_1}\otimes \cdots \otimes f_{j_t}\right) = z_{i_1}\cdots z_{i_s}w_{j_1}\cdots w_{j_t}$$
Clearly, $\Phi\circ \Psi$ is the identity map, and that shows $\Phi$ is surjective. Note that,
$$\Psi\circ\Phi(w_jz_i) = \Psi(u(f_j\otimes e_i)) = \sum_{k=1}^m\sum_{l=1}^n u_{(k,l),(i,j)}z_kw_l= w_jz_i-P_{i,j}$$
Note further, that the multiplication in $\mathcal{F}^{alg}(E,F)$ is essentially a successive application of $u$ on parts of the tensor product of the multiplicands. Therefore, it is easy to proceed with the above calculation to verify that for all $Q\in \mathcal{P}_{m,n}$, the difference between $Q$ and $\Psi\circ \Phi(Q)$ can be written as a sum of multiples of elements of the form $w_jz_i- \Psi\circ\Phi(w_jz_i)= P_{i,j}$. In other words, $Q-\Psi\circ \Phi(Q)\in I$, and when $\Phi(Q)=0$, we have $Q \in I$. Hence, $\ker \Phi\subset I$, and conversely, 
$$\Phi(P_{i,j})=\Phi(w_jz_i)- \Phi\circ\Psi\circ\Phi(w_jz_i) = \Phi(w_jz_i)-\Phi(w_jz_i)=0\quad \forall i,j$$
$$\Rightarrow\quad I\subset \ker\Phi$$
\end{proof}
We see that our algebra of interest in this section, $\mathcal{F}^{alg}(E,F)$, can be identified as a well understood quotient of $\mathcal{P}_{m,n}$. Thus, we get a natural correspondence between ideals in $\mathcal{F}^{alg}(E,F)$, and ideals of non-commutative polynomials. So, in light of Proposition \ref{subprod-ideals}, to complete this viewpoint on subproduct systems we need to describe the ideals in $\mathcal{P}_{m,n}$ that are mapped under $\Phi$ to homogeneous subspaces in $\mathcal{F}^{alg}(E,F)$.

Let $\epsilon:\mathcal{P}_{m,n}\to \mathbb{C}[z_1,\ldots,z_m,w_1,\ldots, w_n]$ be the natural mapping that takes a non-commutative polynomial into its commutative version.
\begin{defi}
A non-commutative complex polynomial $P\in\mathcal{P}_{m,n}$ 
will be called \textit{$(m,n)$-homogeneous} of degree $(d_m,d_n)$, if $P= \sum_i Q_i$, where $\{Q_i\}$ are monomials such that for all $i$, 
$\epsilon(Q_i)$ is an $(m,n)$-homogeneous (commutative) polynomial of degree $(d_m,d_n)$.\\
An ideal $I\subset \mathcal{P}_{m,n}$ will be called \textit{$(m,n)$-homogeneous}, if it is generated by $(m,n)$-homogeneous non-commutative polynomials.
\end{defi}
For example, $P(z,w)= wzw+z^2w$ is a non-commutative polynomial, that is \textit{not} $(1,1)$-homogeneous, while $Q(z_1,z_2,w)= wz_1wz_2+ z_1^2w^2$ is $(2,1)$-homogeneous.

\begin{thm}\label{non-comm-ideal}
Suppose $E,F$ are two Hilbert spaces with fixed finite orthonormal bases $\{e_1,\ldots,e_m\}\subset E$ and $\{f_1,\ldots, f_n\}\subset F$, and $u: F\otimes E\to E\otimes F$ is a unitary operator. Let $\{P_{i,j}\}_{i,j=1}^{m,n}\subset \mathcal{P}_{m,n}$ be the non-commutative polynomials as defined in Lemma \ref{hard-lemma} relative to $u$ and the chosen bases.

Then, there is an inclusion-reversing bijection between the following collections:
\begin{itemize}
\item Standard subproduct systems $X$ over $\mathbb{N}^2$ that have $u$ as a commutation relation, and that $X(1,0)\subset E$, $X(0,1)\subset F$.
\item Proper $(m,n)$-homogeneous ideals in $\mathcal{P}_{m,n}$ that contain all of $\{P_{i,j}\}$.
\end{itemize}
\end{thm}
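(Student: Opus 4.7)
The plan is to obtain the claimed bijection by composing two correspondences that the preceding results essentially provide. First, Proposition \ref{subprod-ideals} furnishes an inclusion-reversing bijection between the standard subproduct systems $X$ (with $X(1,0)\subset E$, $X(0,1)\subset F$, and $u$ a commutation relation) and the proper homogeneous ideals $\mathcal{M}\subset \mathcal{F}^{alg}(E,F)$, via $X\mapsto \mathcal{F}^{alg\perp}_X$. Second, Lemma \ref{hard-lemma} exhibits $\mathcal{F}^{alg}(E,F)$ as the quotient $\mathcal{P}_{m,n}/\langle P_{i,j}\rangle$ through the surjection $\Phi$, so by the standard correspondence theorem, $I\mapsto \Phi(I)$ is an inclusion-preserving bijection between the ideals of $\mathcal{P}_{m,n}$ containing $\{P_{i,j}\}$ and the ideals of $\mathcal{F}^{alg}(E,F)$. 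Composing these will yield an inclusion-reversing bijection of the desired shape, provided we can match the two flavors of homogeneity on the two sides.

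The crucial observation will be that $\Phi$ respects the $(m,n)$-bigrading on $\mathcal{P}_{m,n}$: a non-commutative monomial of bidegree $(d_m,d_n)$, i.e., having $d_m$ occurrences of the $z_i$'s and $d_n$ occurrences of the $w_j$'s, is sent into $E^{\otimes d_m}\otimes F^{\otimes d_n}$. This is because the multiplication in $\mathcal{F}^{alg}(E,F)$ is implemented by the operators $W^u_{(i,j),(k,l)}$, which shuffle the order of factors using $u:F\otimes E\to E\otimes F$ without altering the total count of $E$-factors or $F$-factors. Writing $\mathcal{P}_{m,n}^{(d_m,d_n)}$ for the subspace of $(m,n)$-homogeneous polynomials of bidegree $(d_m,d_n)$, this amounts to $\Phi(\mathcal{P}_{m,n}^{(d_m,d_n)})\subset E^{\otimes d_m}\otimes F^{\otimes d_n}$.

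Given this, both directions will follow easily. If $I$ is an $(m,n)$-homogeneous ideal, then $I=\bigoplus_{(s,t)}(I\cap \mathcal{P}_{m,n}^{(s,t)})$, and so $\Phi(I)=\bigoplus_{(s,t)}\Phi(I\cap \mathcal{P}_{m,n}^{(s,t)})$ is a homogeneous subspace, hence a homogeneous ideal of $\mathcal{F}^{alg}(E,F)$. Conversely, for a homogeneous ideal $\mathcal{M}=\bigoplus_{(s,t)}\mathcal{M}_{(s,t)}$, the bigrading-preservation forces $\Phi(P)\in\mathcal{M}$ if and only if $\Phi(P_{(s,t)})\in \mathcal{M}_{(s,t)}$ for every $(m,n)$-homogeneous component $P_{(s,t)}$ of $P$; consequently $\Phi^{-1}(\mathcal{M})$ decomposes as the direct sum of its $(m,n)$-homogeneous components and is in particular generated by $(m,n)$-homogeneous polynomials. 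Properness transfers automatically, since $\Phi$ is surjective and $\ker\Phi$ is itself a proper ideal.

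The main, and really only, delicacy will be the bigrading bookkeeping just described; the one point that must be noted explicitly is that each generator $P_{i,j}$ of $\ker\Phi$ is already $(m,n)$-homogeneous of bidegree $(1,1)$, so that passing to the quotient does not disturb the grading. Everything else is a routine application of the correspondence theorem for ideals under a surjective homomorphism.
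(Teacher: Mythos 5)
Your proposal is correct and follows essentially the same route as the paper: compose the inclusion-reversing bijection of Proposition \ref{subprod-ideals} with the ideal-correspondence theorem applied to the surjection $\Phi$ of Lemma \ref{hard-lemma}, then check that homogeneity matches on the two sides. Your explicit bigrading bookkeeping (that $\Phi$ sends bidegree-$(d_m,d_n)$ monomials into $E^{\otimes d_m}\otimes F^{\otimes d_n}$ because the $W^u$ maps preserve the count of $E$- and $F$-factors) is exactly the observation the paper invokes, just stated a little more tersely there.
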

\begin{proof}
By Proposition \ref{subprod-ideals} there is an inclusion-reversing bijection between the collection of subproduct systems described in the statement, and the collection of proper homogeneous ideals in the algebra $\mathcal{F}^{alg}(E,F)$. But, by Lemma \ref{hard-lemma}, the mapping $\Phi$ gives an inclusion-preserving bijection between ideals in $\mathcal{F}^{alg}(E,F)$, and ideals in $\mathcal{P}_{m,n}$ that contain all of $\{P_{i,j}\}$. Thus, we are left to show that an ideal $I\subset \mathcal{F}^{alg}(E,F)$ is a homogeneous subspace, if and only if, $\Phi^{-1}(I)$ is an $(m,n)$-homogeneous ideal. Indeed, $I$ is homogeneous, if and only if, it is generated by vectors in $\bigcup_{(i,j)\in \mathbb{N}^2} E^{\otimes i}\otimes F^{\otimes j}$. That, in turn, is equivalent to $\Phi^{-1}(I)$ being generated by a subset of $\bigcup_{(i,j)\in \mathbb{N}^2} \Phi^{-1}(E^{\otimes i}\otimes F^{\otimes j})$. But, from the definition of $\Phi$, this is exactly the collection of $(m,n)$-homogeneous non-commutative polynomials in $\mathcal{P}_{m,n}$.
\end{proof}
The above theorem supplies the existence of an abundance of examples for standard subproduct systems. One needs only to point at a set of $(m,n)$-homogeneous non-commutative polynomials that will generate (together with $\{P_{i,j}\}$) an ideal in $\mathcal{P}_{m,n}$. In fact, it is possible to use this theorem to supply an alternative proof to Proposition \ref{construct}, by constructing the appropriate ideal instead of constructing the desired subproduct system directly.

\section{The tensor algebra as a complete invariant}
Suppose $X$, $Y$ are two subproduct systems over $\mathbb{N}^2$. It is fairly clear that when $X$ and $Y$ are isomorphic, their tensor algebras $\mathcal{A}_X$ and $\mathcal{A}_Y$ are isometrically isomorphic. In the following sections we would like to address the converse. That is, in case $\mathcal{A}_X$ is isomorphic to $\mathcal{A}_Y$, what can be said about the underlying subproduct systems $X$ and $Y$? In general, this question seems hard to treat. As we shall see the tensor algebras of subproduct systems possess two natural gradations by both the monoids $\mathbb{N}$ and $\mathbb{N}\times\mathbb{N}$. It will be shown that if the isomorphism between the tensor algebras respects those gradations, then under suitable conditions the underlying subproduct systems must be isomorphic. Therefore, given an isometric isomorphism $\phi:\mathcal{A}_X\to\mathcal{A}_Y$, the issue at hand becomes a search for a sufficient condition which will force $\phi$ to preserve the $\mathbb{N}\times \mathbb{N}$-gradation (or, at least, the $\mathbb{N}$-gradation). 

We use these tools to prove the main result of this section, Theorem \ref{complete-inv}, which does not use the language of gradations, but rather looks at the homeomorphism $\phi^\ast$ between $\mathcal{M}(\mathcal{A}_X)$ and $\mathcal{M}(\mathcal{A}_Y)$, which is induced by $\phi$.

Finally, we try to approach the above question without any assumptions on the nature of the algebra isomorphism. In this general case, we find that $\phi^\ast$ actually preserves some differential properties of the polyball varieties associated with the character spaces. As a consequence, we produce two numerical values for a subproduct system, that must be equal for $X$ and $Y$. \\ \\
But, first, we would like to show that the question raised above is not trivial. That is, there are non-isomorphic subproduct systems whose tensor algebras are isometrically isomorphic. Note, that this comes in contrast to the situation in subproduct systems over $\mathbb{N}$. In that setting, \cite[Theorem 4.8]{orr-subprod} denies the existence of such non-isomorphic subproduct systems.

\begin{examp}
Choose $m,n\geq0$, and define a subproduct system $X^{m,n}=\{X(i,j)\}_{(i,j)\in \mathbb{N}^2}$ by setting $X(i,j)=\{0\}$ if $i+j\geq2$, and $X(1,0)=\mathbb{C}^m$, $X(0,1)=\mathbb{C}^n$, $X(0,0)=\mathbb{C}$. It is clear that there is only one possible subproduct system structure on $X$, after restricting to the mentioned dimensions. Then, $\mathcal{F}_X=\mathbb{C}\oplus \mathbb{C}^m\oplus \mathbb{C}^n$, and we can realize operators in $\mathcal{A}_X$ as complex $(m+n+1)\times(m+n+1)$ matrices. When choosing some orthonormal bases $\{e_i\}_{i=2}^{m+1}\subset X(1,0)$, $\{f_j\}_{j=m+2}^{m+n+1}\subset X(0,1)$ and $\Delta=1\in X(0,0)$, the matrix representations relative to the basis $\{\Delta,e_2,\ldots, e_{m+1},f_{m+2},\ldots, f_{m+n+1}\}$ of $\mathcal{F}_X$ will be of the form
$$L^{(1,0)}_{e_i} = \left(\begin{array}{cccc} \delta_{i1} & 0&\cdots  & 0  \\ \vdots & \vdots & & \vdots \\ \delta_{i,m+n+1}& 0& \cdots & 0\end{array}\right)\quad L^{(0,1)}_{f_j} = \left(\begin{array}{cccc} \delta_{j1} & 0&\cdots  & 0  \\ \vdots & \vdots & & \vdots \\ \delta_{j,m+n+1}& 0& \cdots & 0\end{array}\right)$$
and $L^{(0,0)}_\Delta = I$. Thus, we see clearly that when $m+n=m'+n'$, we have $\mathcal{A}_{X^{m,n}}\cong \mathcal{A}_{X^{m',n'}}$. Yet, for every choice of $m,n,m',n'$ such that $\{m,n\}\not=\{m',n'\}$ it is clear that $X^{m,n}$ is \textit{not} isomorphic to $X^{m',n'}$, because of dimensions mismatch.\\
\end{examp}
 
The strongest property that a subproduct system can possess with regard to the issue at hand deserves a terminology:
\begin{defi}
We say a subproduct system $X$ \textit{uniquely defines its algebra} if for all subproduct systems $Y$ such that $\mathcal{A}_X\cong \mathcal{A}_Y$ (in the sense of an isometric isomorphism), we must have $X\cong Y$.
\end{defi}
Thus, $X^{m,n}$ in the example above are subproduct systems that do not uniquely define their algebra.

\subsection{Graded isomorphisms}
Before presenting the next definition we need to introduce a simple notation: Given a subproduct system $X$, and a vector $\eta= (x,y)\in X(1,0)\oplus X(0,1)$, we denote $L^X_\eta:= L^{(1,0)}_x+L^{(0,1)}_y\in \mathcal{A}_X$. 
\begin{defi}\label{graded}
Let $X$ and $Y$ be subproduct systems over $\mathbb{N}^2$, and $\phi:\mathcal{A}_X\to \mathcal{A}_Y$ an isomorphism. \\
We say that $\phi$ is \textit{$\mathbb{N}$-graded} if for all $\eta\in X(1,0)\oplus X(0,1)$, there exists $A\eta\in Y(1,0)\oplus Y(0,1)$ such that $\phi\left(L^X_\eta\right) = L^Y_{A\eta}$.\\
We say that $\phi$ is \textit{$\mathbb{N}^2$-graded} for all $\eta\in X(1,0)$ and $\xi\in X(0,1)$, there are $B\eta\in Y(\pi(1,0))$ and $C\xi\in Y(\pi(0,1))$ for which
$$\phi\left(L^{(1,0)}_\eta\right) = L^{\pi(1,0)}_{B\eta}\quad\phi\left(L^{(0,1)}_\xi\right) = L^{\pi(0,1)}_{C\xi}$$
where $\pi$ is either the identity or the coordinate switch on $\mathbb{N}^2$. 
\end{defi}
\textit{Remark}: If they exist, the mappings $A, B, C$, that were implicitly defined, must be linear, and since $\phi$ is an isomorphism they are also invertible. Moreover, if $\phi$ is $\mathbb{N}^2$-graded and isometric, then $B, C$ are unitary operators. This is seen from Lemma \ref{norm-eq}, since $\|\eta\|= \|L^{(1,0)}_\eta\| = \|L^{\pi(1,0)}_{B\eta}\| = \|B\eta\|$. \\

The motivation behind those definitions may not be clear at first sight. But, notice, that the tensor algebra contains the following sub-algebra
$$\widetilde{\mathcal{A}}_X:= \bigoplus_{(i,j)\in \mathbb{N}^2} \left\{L^{(i,j)}_x\::\:x\in X(i,j)\right\}\subset \mathcal{A}_X$$
where the direct sum decomposition is algebraic. This sub-algebra is norm-dense in $\mathcal{A}_X$, as a consequence of the discussion about Fourier coefficients. Also, $\widetilde{\mathcal{A}}_X$ is clearly a $\mathbb{N}\times\mathbb{N}$-graded algebra. Moreover, it can also be seen as a $\mathbb{N}$-graded algebra, if we write it as
$$\widetilde{\mathcal{A}}_X = \bigoplus_{n\in \mathbb{N}} \left(\bigoplus_{i=0}^n \left\{L^{(i,n-i)}_x\::\:x\in X(i,n-i)\right\}\right)$$
It is easy to check that an isomorphism $\phi:\mathcal{A}_X\to\mathcal{A}_Y$ is $\mathbb{N}$( or $\mathbb{N}^2$)-graded by our definition, is equivalent to the fact that $\phi|_{\widetilde{\mathcal{A}}_X}$ takes values in $\widetilde{\mathcal{A}}_Y$ and preserves the $\mathbb{N}$( or $\mathbb{N}^2$)-gradation. (In the $\mathbb{N}\times\mathbb{N}$ case, we allow it to preserve the gradation up to a switch of the coordinates.)

\begin{prop}\label{n2-to-isom}
Let $X, Y$ be subproduct systems over $\mathbb{N}^2$. Then, $X$ and $Y$ are isomorphic if and only if there exists an $\mathbb{N}^2$-graded isometric isomorphism $\phi: \mathcal{A}_X\to \mathcal{A}_Y$. Moreover, in this case $\phi$ is unitarily implemented, that is, there exists a unitary $V: \mathcal{F}_X\to\mathcal{F}_Y$ such that $\phi(T) = VTV^\ast$ for all $T\in\mathcal{A}_X$. 
\end{prop}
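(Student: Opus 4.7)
The forward direction is straightforward: given a subproduct system isomorphism $\{V_s : X(s) \to Y(\pi(s))\}$, I would take the unitary $V := \bigoplus_s V_s : \mathcal{F}_X \to \mathcal{F}_Y$ and set $\phi(T) := VTV^*$. A short computation on a vector $\xi \in X(t)$, using the intertwining relation $V_{s+t} U^X_{s,t} = U^Y_{\pi(s),\pi(t)}(V_s \otimes V_t)$, yields $\phi(L^{(s)}_\eta) = L^{(\pi(s))}_{V_s \eta}$, so $\phi$ is $\mathbb{N}^2$-graded, and by Lemma \ref{finite-gen} it carries $\mathcal{A}_X$ isometrically onto $\mathcal{A}_Y$.

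For the converse, suppose $\phi: \mathcal{A}_X \to \mathcal{A}_Y$ is an $\mathbb{N}^2$-graded isometric isomorphism, with associated unitaries $B: X(1,0) \to Y(\pi(1,0))$ and $C: X(0,1) \to Y(\pi(0,1))$. The crux is to show that $\phi$ preserves the full $\mathbb{N}^2$-gradation on $\widetilde{\mathcal{A}}_X$, not merely in degrees $(1,0)$ and $(0,1)$. I would do this by invoking the proof of Lemma \ref{finite-gen}: every $L^{(m,n)}_\eta$ is a linear combination of products of generators, and $\phi$ sends each product containing $m$ factors of degree $(1,0)$ and $n$ of degree $(0,1)$ to a product of the corresponding image generators, whose total grade equals $m \cdot \pi(1,0) + n \cdot \pi(0,1) = \pi(m,n)$ by additivity of $\pi$. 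Hence $\phi(L^{(m,n)}_\eta)$ lies in $\{L^{(\pi(m,n))}_\xi : \xi \in Y(\pi(m,n))\}$, and I set $V_{(m,n)}(\eta)$ to be the unique $\xi$ so obtained. Linearity is automatic, and Lemma \ref{norm-eq} together with $\phi$ being isometric forces $V_{(m,n)}$ to be a linear isometry; applying the same construction to $\phi^{-1}$ supplies the inverse, making $V_{(m,n)}$ unitary. The main technical obstacle here, I expect, is keeping careful track of this gradation in the coordinate-swap case $\pi(i,j) = (j,i)$, where degrees are exchanged factor by factor.

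To verify that $\{V_{(m,n)}\}$ constitutes a subproduct system isomorphism, I would apply $\phi$ to the identity $L^{(s)}_\eta L^{(t)}_\xi = L^{(s+t)}_{U^X_{s,t}(\eta \otimes \xi)}$, obtaining
\[
L^{(\pi(s+t))}_{V_{s+t} U^X_{s,t}(\eta \otimes \xi)} = L^{(\pi(s))}_{V_s \eta} L^{(\pi(t))}_{V_t \xi} = L^{(\pi(s)+\pi(t))}_{U^Y_{\pi(s),\pi(t)}(V_s \eta \otimes V_t \xi)},
\]
and comparing via Lemma \ref{norm-eq} delivers the required intertwining relation. For the unitary implementation, I would take $V := \bigoplus_{(m,n)} V_{(m,n)}$, which is unitary by construction, verify $\phi(T) = VTV^*$ on the generators $L^{(1,0)}_e$ and $L^{(0,1)}_f$ by the same short computation used in the forward direction, and then extend by norm continuity combined with Lemma \ref{finite-gen}.
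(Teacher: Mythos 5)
Your proof is correct, but it is organized quite differently from the paper's. The paper first reduces to standard subproduct systems via Proposition \ref{standard-2}, builds the candidate unitary on the ambient full Fock space as $\widetilde{V}=\bigoplus_{(i,j)}(B^{\otimes i}\otimes C^{\otimes j})$ (composed with a flip operator $W$ built from $W^{u_Y}$ in the coordinate-switch case), proves that this operator restricts to a unitary of $\mathcal{F}_X$ onto $\mathcal{F}_Y$ by the norm identity $\|p^X_{(i,j)}\xi\|=\|p^Y_{\pi(i,j)}V\xi\|$, and then separately derives the relation $q(C\otimes B)=r(B\otimes C)u_X$ between the commutation relations before deducing the intertwining of the coisometries. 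You instead define $V_{(m,n)}$ intrinsically, fiber by fiber, as the map induced by $\phi$ on creation operators: you first argue that $\phi$ preserves the full $\mathbb{N}^2$-gradation of $\widetilde{\mathcal{A}}_X$ (via the decomposition from the proof of Lemma \ref{finite-gen} and the identity $L^{(s)}_\eta L^{(t)}_\xi=L^{(s+t)}_{U^X_{s,t}(\eta\otimes\xi)}$, which makes each monomial in the generators a single creation operator of the correct grade), then read off $V_{(m,n)}$ from injectivity of $\xi\mapsto L_\xi$, get unitarity from Lemma \ref{norm-eq} applied to $\phi$ and $\phi^{-1}$, and obtain the intertwining relation by applying $\phi$ to the product identity. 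The essential ingredients (multiplicativity and isometry of $\phi$, Lemma \ref{norm-eq}, and the fact that grade-$(m,n)$ monomials in the generators are creation operators) are the same, but your route avoids the standardization step, the ambient Fock space, and any explicit mention of the commutation relations $u_X,u_Y$; what it gives up is the paper's explicit tensor-product formula for $V$ and the concrete relation between $u_X$, $u_Y$, $B$ and $C$, which are of independent interest. Two minor points to make explicit in a final write-up: set $V_{(0,0)}=\mathrm{id}_{\mathbb{C}}$ (the definition of subproduct system isomorphism only supplies $V_s$ for $s\neq 0$), and note that the ``unique $\xi$'' in your definition of $V_{(m,n)}(\eta)$ is well defined because $L^{(\pi(m,n))}_\xi\Delta_Y=\xi$, so distinct vectors give distinct operators.
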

\begin{proof}
Suppose $X$ and $Y$ are isomorphic. Then, we can build a unitary $V:\mathcal{F}_X\to\mathcal{F}_Y$ in the same manner as in the proof of Corollary \ref{standard}. It is easy to verify that $V$ implements a $\mathbb{N}^2$-graded isometric isomorphism of $\mathcal{A}_X$ to $\mathcal{A}_Y$.

Conversely, suppose $\phi: \mathcal{A}_X\to \mathcal{A}_Y$ a $\mathbb{N}^2$-graded isometric isomorphism. Let $\pi$ be the monoid isomorphism, and $B, C$ the unitaries that exist by the definition of $\mathbb{N}^2$-graded isomorphisms.
We can define the following unitary operator:
$$\widetilde{V} = \oplus_{(i,j)}(B^{\otimes i}\otimes C^{\otimes j}):\:\bigoplus_{i,j=0}^\infty X(1,0)^{\otimes i}\otimes X(0,1)^{\otimes j}\cong \mathcal{F}(X(1,0),X(0,1))\to$$ $$ \to \bigoplus_{i,j=0}^\infty Y(\pi(1,0))^{\otimes i}\otimes Y(\pi(0,1))^{\otimes j}\cong \mathcal{F}(Y(\pi(1,0)),Y(\pi(0,1))) $$
Let $u_X,u_Y$ be commutation relations for the respective subproduct systems. Denote, also, the operator
$$W = \oplus_{(i,j)}W^{u_Y}_{(0,i),(j,0)}: \bigoplus_{i,j=0}^\infty Y(0,1)^{\otimes i}\otimes Y(1,0)^{\otimes j}\to \bigoplus_{i,j=0}^\infty Y(1,0)^{\otimes i}\otimes Y(0,1)^{\otimes j}$$
and, now, we can define
$$V = \left\{\begin{array}{ll} \widetilde{V} & \pi = id\\ W\widetilde{V} & \pi = \mbox{\small{switch}} \end{array}\right.$$
In both cases $V$ will be a unitary operator whose range is $\mathcal{F}(Y(1,0),Y(0,1))$.

Since we are interested only with the isomorphism class of the subproduct systems involved, by Proposition \ref{standard-2} we may assume that $X$ and $Y$ are standard, and that $\mathcal{F}_X$ and $\mathcal{F}_Y$ are subspaces of the domain and the range of $V$, respectively. We want to show that $V|_{\mathcal{F}_X}$ gives a unitary operator into $\mathcal{F}_Y$. For that purpose, it is enough to check that $\|p_{(i,j)}^X\xi\|=\|p_{\pi(i,j)}^YV\xi\|$ for all $\xi\in X(1,0)^{\otimes i}\otimes X(0,1)^{\otimes j}$, where $p_{(i,j)}^X, p_{(i,j)}^Y$ are the projections as in the statement of Proposition \ref{standard-2}.

Suppose $\xi = \sum_{p=1}^t e_{p,1}\otimes \ldots\otimes e_{p,i}\otimes f_{p,i+1}\otimes\ldots \otimes f_{p,i+j}$ where $e_{p,l}\in X(1,0)$ and $f_{p,l}\in X(0,1)$. When going through the arguments of the proof of Lemma \ref{finite-gen}, it is evident that $L^{(i,j)}_{p^X_{(i,j)}\xi} = \sum_{p=1}^t L^{(1,0)}_{e_{p,1}}\cdots L^{(0,1)}_{f_{p,i+j}}$. So, by Lemma \ref{norm-eq},
$$\|p_{(i,j)}^X\xi\| =  \left\|\sum_{p=1}^t L^{(1,0)}_{e_{p,1}}\cdots L^{(0,1)}_{f_{p,i+j}}\right\|= \left\|\phi\left(\sum_{p=1}^t L^{(1,0)}_{e_{p,1}}\cdots L^{(0,1)}_{f_{p,i+j}}\right)\right\| =$$ $$= \left\|\sum_{p=1}^t L^{\pi(1,0)}_{Be_{p,1}}\cdots L^{\pi(0,1)}_{Cf_{p,i+j}}\right\|= \|p_{\pi(i,j)}^YV\xi\|$$
Thus, we can refer to the unitary $V:\mathcal{F}_X\to\mathcal{F}_Y$. 

We need to show that $V$ is a product system isomorphism. For that purpose, we are about to prove that 
$$q(C\otimes B)=r(B\otimes C)u_X$$ 
where
$$q = \left\{\begin{array}{ll} p_{(1,1)}^Yu_Y & \pi = id\\ p_{(1,1)}^Y & \pi = \mbox{\small{switch}} \end{array}\right.\quad r = \left\{\begin{array}{ll} p_{(1,1)}^Y & \pi = id\\ p_{(1,1)}^Yu_Y & \pi = \mbox{\small{switch}} \end{array}\right.$$
Indeed, pick $e\otimes f\in X(1,0)\otimes X(0,1)$, and denote $u_X(f\otimes e) = \sum e_p\otimes f_p$. Then, 
$$\phi(\sum L^{(1,0)}_{e_p}L^{(1,0}_{f_p})\Delta_Y = \sum L^{\pi(1,0)}_{Be_i}L^{\pi(0,1)}_{Cf_i}\Delta_Y = r(B\otimes C)(\sum e_i\otimes f_i) = r(B\otimes C)u_X(f\otimes e)$$
On the other hand, we have $\sum L^{(1,0)}_{e_p}L^{(0,1)}_{f_p}= L^{(0,1)}_fL^{(1,0)}_e$, and that means,
$$\phi(\sum L^{(1,0)}_{e_p}L^{(0,1)}_{f_p})\Delta_Y = L^{\pi(0,1)}_{Cf}L^{\pi(1,0)}_{Be}\Delta_Y = q(C\otimes B)(f\otimes e)$$
So, we can claim, that for all $\eta\in X(i,j)$ and $\xi\in X(k,l)$ we have 
$$p_{\pi(i+k,j+l)}^YW^{u_Y}_{\pi(i,j),\pi(k,l)}(V\eta \otimes V\xi) = p_{\pi(i+k,j+l)}^YVW^{u_X}_{(i,j),(k,l)}(\eta\otimes \xi)$$
But, since $V$ intertwines, up to $\pi$, the projections on $\mathcal{F}_X$ and $\mathcal{F}_Y$, the above equation is equivalent to
$$U^Y_{\pi(i,j), \pi(k,l)}(V\eta\otimes V\xi) = VU^X_{(i,j),(k,l)}(\eta\otimes \xi)$$
Finally, notice that for all $e\in X(1,0)$, $f\in X(0,1)$ and $\eta\in X(i,j)$, 
$$VL^{(1,0)}_e\eta = VU^X_{(1,0),(i,j)}(e\otimes \eta) = U^Y_{\pi(1,0),\pi(i,j)}(Be\otimes V\eta) = L^{\pi(1,0)}_{Be}V\eta$$
$$VL^{(0,1)}_f\eta = VU^X_{(0,1),(i,j)}(f\otimes \eta) = U^Y_{\pi(0,1),\pi(i,j)}(Cf\otimes V\eta) = L^{\pi(0,1)}_{Cf}V\eta$$
Thus, $\phi(L_e^{(1,0)}) = L_{Be}^{\pi(1,0)} = VL^{(1,0)}_eV^\ast$ and $\phi(L_f^{(0,1)}) = L_{Cf}^{\pi(0,1)} = VL^{(1,0)}_fV^\ast$.
\end{proof}
\textit{Remark:} One may wonder at this point why have we inserted the `coordinate switch' monoid isomorphism into this discussion. Indeed, if we had not allowed a monoid isomorphism in both the definitions of subproduct system isomorphism, and of a $\mathbb{N}^2$-graded algebra isomorphism, the above proposition would still have been correct. The reason for this insertion will become clear only later. We will see that under certain conditions, a $\mathbb{N}$-graded isomorphism must be $\mathbb{N}^2$-graded, in the sense that may include the switch.\\

The next item on the agenda is to show that a $\mathbb{N}$-gradation of an isomorphism $\phi:\mathcal{A}_X\to\mathcal{A}_Y$ can be spotted by looking on its action on the character spaces. 

Recall, that given such $\phi$, we have a homeomorphism $\phi^\ast:\mathcal{M}(\mathcal{A}_Y)\to\mathcal{M}(\mathcal{A}_X)$ that is given by $\phi^\ast(\alpha):=\alpha\circ\phi$ on characters. By Theorem \ref{chars}, the character spaces of tensor algebras are homeomorphic to certain polyball varieties. Under this identification there is always one special character $\alpha_0\in\mathcal{M}(\mathcal{A}_X)$ which is identified with the zero vector in the Euclidean space. Recall again, that by Theorem \ref{chars}, a polyball variety is defined by a homogeneous ideal of polynomials. Hence, the zero vector is always present inside the variety, and obviously it remains fixed under different choices of bases for the subproduct system. So, the character $\alpha_0$ is well-defined, and we will call it the \textit{vacuum character}.
\begin{lem}\label{fourier-char}
If $X$ a subproduct system, and $\alpha_0\in\mathcal{M}(\mathcal{A}_X)$ is the vacuum character, then for all $T\in \mathcal{A}_X$ its $0$-th Fourier coefficient is given by $\widetilde{\Phi}_{0}(T) = \alpha_0(T)I$.
\end{lem}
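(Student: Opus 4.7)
My plan is to combine two observations: first, that $\widetilde{\Phi}_0(T)$ is automatically a scalar multiple of the identity; and second, that $\alpha_0$ annihilates every creation operator of positive bidegree, so the Ces\`{a}ro expansion of $T$ collapses under $\alpha_0$ to its $0$-th coefficient.

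For the first observation, I would invoke the structure statement recalled in the Fourier-coefficients subsection: $\widetilde{\Phi}_0(T) = L^{(0,0)}_{\eta_{0,0}}$ for some $\eta_{0,0}\in X(0,0)=\mathbb{C}$. Since $X(0,0)$ is one-dimensional and $L^{(0,0)}_c = cI$, we get $\widetilde{\Phi}_0(T) = cI$ for a unique scalar $c\in\mathbb{C}$. The whole task is then reduced to showing $c=\alpha_0(T)$.

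For the second observation, I would argue that $\alpha_0(L^{(i,j)}_x)=0$ whenever $(i,j)\ne(0,0)$. By Corollary \ref{standard} it suffices to treat the standard case, and there Theorem \ref{chars} identifies the vacuum character with the point $(0,0)$ of the polyball variety and gives $\alpha_0(L^{(i,j)}_x)=q^x(0,0)$; but $q^x$ is $(m,n)$-homogeneous of total degree $i+j\geq 1$, so it has no constant term and $q^x(0,0)=0$. Hence $\alpha_0(\widetilde{\Phi}_k(T))=0$ for every $k\geq 1$, because $\widetilde{\Phi}_k(T)$ is a finite sum of such $L^{(i,k-i)}_{\eta_{i,k-i}}$.

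Finally, I would combine these with the Ces\`{a}ro convergence
$$T=\lim_{p\to\infty}\sum_{k\leq p}\left(1-\tfrac{k}{p}\right)\widetilde{\Phi}_k(T)$$
in norm. Since $\alpha_0$ is continuous (as any character on a Banach algebra), applying it termwise yields
$$\alpha_0(T)=\lim_{p\to\infty}\sum_{k\leq p}\left(1-\tfrac{k}{p}\right)\alpha_0\!\left(\widetilde{\Phi}_k(T)\right)=\alpha_0\!\left(\widetilde{\Phi}_0(T)\right)=c,$$
and therefore $\widetilde{\Phi}_0(T)=cI=\alpha_0(T)I$. I do not expect a real obstacle here; the only point requiring a touch of care is the passage from general $X$ to the standard case needed to apply Theorem \ref{chars}, which is provided precisely by Corollary \ref{standard}.
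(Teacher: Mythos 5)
Your proof is correct and follows essentially the same route as the paper: identify $\widetilde{\Phi}_0(T)$ as a scalar multiple of $I$, show $\alpha_0$ kills every positive-degree Fourier coefficient, and pass to the limit using Ces\`{a}ro convergence and continuity of characters. The only cosmetic difference is that the paper gets the vanishing $\alpha_0(\widetilde{\Phi}_k(T))=0$ for $k\geq 1$ from $\alpha_0(L^{(1,0)}_e)=\alpha_0(L^{(0,1)}_f)=0$ together with multiplicativity and Lemma \ref{finite-gen}, whereas you read it off directly from the formula $\alpha_0(L^{(i,j)}_x)=q^x(0,0)$ in Theorem \ref{chars}; both are fine.
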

\begin{proof}
Suppose $T\in \mathcal{A}_X$ is given. Proposition \ref{prod-char}(a) and the discussion throughout the previous section give us the information that, $\alpha_0\left(L^{(1,0)}_e\right)= \alpha_0\left(L^{(0,1)}_f\right)=0$ for all $e\in X(1,0)$ and $f\in X(0,1)$. Since $\alpha_0$ is a character, by Lemma \ref{finite-gen}, this means $\alpha_0\left(\widetilde{\Phi}_k(T)\right)=0$ for all $k\geq1$. But, since $\alpha_0$ is norm continuous, and $\sum_k \widetilde{\Phi}_k(T)$ Ces\`{a}ro-converges to $T$ in norm, we can conclude that $\alpha_0(T) = \alpha_0\left(\widetilde{\Phi}_0(T)\right)$. Recalling that the $0$-th Fourier coefficient must be a scalar multiple of $L^{(0,0)}_\Delta= I$, and that $\alpha_0\left(tI\right)=  t$ for all $t\in\mathbb{C}$, we obtain the statement of the lemma.
\end{proof}

The next lemma clarifies the connection between the vacuum character and the $\mathbb{N}$-gradation of $\mathcal{A}_X$.
\begin{lem}\label{long-ugly}
Let $X$ be a subproduct system, and $\alpha_0\in\mathcal{M}(\mathcal{A}_X)$ the vacuum character. Then, for all $T\in \mathcal{A}_X$ the following are equivalent:\\
\textbf{(a)} $\widetilde{\Phi}_0(T) = 0$ and $\widetilde{\Phi}_1(T) = 0$\\
\textbf{(b)} $T$ is the norm limit of sums of the form $\sum_{j=1}^{t_i} T_{ij}S_{ij}$ for some $\{T_{ij},S_{ij}\}\subset \mathcal{A}_X$ such that $\alpha_0(T_{ij})=\alpha_0(S_{ij})=0$.
\end{lem}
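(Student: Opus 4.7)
The plan is to handle the two implications separately using the machinery of Fourier coefficients and the identification $\alpha_0(A)I=\widetilde{\Phi}_0(A)$ from Lemma~\ref{fourier-char}. The key structural observation is that a creation operator $L^{(i,j)}_\eta$ shifts the grading $\mathcal{F}_X=\bigoplus X(k,l)$ by total degree $i+j$, so $\widetilde{\Phi}_k\bigl(L^{(i,j)}_\eta\bigr)$ equals $L^{(i,j)}_\eta$ if $i+j=k$ and vanishes otherwise. By linearity and norm continuity of the $\widetilde{\Phi}_k$, this extends: if an operator $A\in\mathcal{A}_X$ lies in the norm closure of $\mathrm{span}\{L^{(i,j)}_\eta : i+j\ge N\}$, then $\widetilde{\Phi}_k(A)=0$ for all $k<N$.

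For (b)$\Rightarrow$(a), I would first use Lemma~\ref{fourier-char} to translate $\alpha_0(T_{ij})=\alpha_0(S_{ij})=0$ into $\widetilde{\Phi}_0(T_{ij})=\widetilde{\Phi}_0(S_{ij})=0$. Each such operator is the Ces\`aro limit of its Fourier series, so with the degree-$0$ coefficient missing it is a norm limit of finite sums $\sum L^{(a,b)}_\eta$ with $a+b\ge 1$. Hence each product $T_{ij}S_{ij}$ is a norm limit of finite sums of terms $L^{(a,b)}_\eta L^{(c,d)}_\zeta$ with $a+b\ge 1$ and $c+d\ge 1$, i.e.\ of operators of total degree $\ge 2$. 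By the structural observation above, $\widetilde{\Phi}_0$ and $\widetilde{\Phi}_1$ both annihilate such limits, and then by linearity and continuity they annihilate $T$ itself.

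For (a)$\Rightarrow$(b), I would start from the Ces\`aro convergence
\[
T=\lim_{p\to\infty}\sum_{k\le p}\Bigl(1-\tfrac{k}{p}\Bigr)\widetilde{\Phi}_k(T).
\]
Since $\widetilde{\Phi}_0(T)=\widetilde{\Phi}_1(T)=0$, the sum effectively runs over $k\ge 2$. Each $\widetilde{\Phi}_k(T)$ has the form $\sum_{i=0}^k L^{(i,k-i)}_{\eta_{i,k-i}}$. For $k\ge 2$, any decomposition $(i,k-i)=(a,b)+(c,d)$ with both summands nonzero together with Lemma~\ref{finite-gen} (applied as in its proof) lets me write each $L^{(i,k-i)}_{\eta_{i,k-i}}$ as a finite sum $\sum L^{(a,b)}_{\eta_j}L^{(c,d)}_{\zeta_j}$. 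Each factor is a creation operator of strictly positive total degree, hence has vanishing $\widetilde{\Phi}_0$ and so vanishing $\alpha_0$. Thus every Ces\`aro partial sum is a finite sum of the prescribed form, exhibiting $T$ as the required norm limit.

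The only mildly delicate point is the (b)$\Rightarrow$(a) direction, where I need to know that $\alpha_0(A)=0$ forces $A$ to be a norm limit of genuinely positive-degree operators; this is not quite immediate from $\alpha_0$ alone, but is exactly what Lemma~\ref{fourier-char} together with Ces\`aro convergence supplies.
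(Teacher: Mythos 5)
Your proof is correct and follows essentially the same route as the paper: Lemma~\ref{fourier-char} plus Ces\`aro convergence in both directions, with the same positive-degree factorization of the Fourier coefficients $\widetilde{\Phi}_k(T)$, $k\geq 2$, for (a)$\Rightarrow$(b). The only cosmetic difference is in (b)$\Rightarrow$(a), where the paper verifies $\widetilde{\Phi}_1(T_{ij}S_{ij})=0$ directly by computing $p_{(k+1,l)}T_{ij}S_{ij}p_{(k,l)}=p_{(k,l+1)}T_{ij}S_{ij}p_{(k,l)}=0$ from the grading of $\mathcal{F}_X$, whereas you approximate each factor in norm by positive-degree creation operators and use that a product of creation operators is a creation operator of summed degree; both arguments rest on the same grading fact and yours is equally valid.
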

\begin{proof}
\textbf{(a)$\Rightarrow$(b)} \\
Suppose $\{e_i\}_{i=1}^m$ and $\{f_j\}_{j=1}^n$ are orthonormal bases for $X(1,0)$, $X(0,1)$, respectively. Then, for all $\eta\in X(k,l)$ with $k>0$, we can write $L^{(k,l)}_\eta = \sum_{i=1}^m L^{(1,0)}_{e_i}L^{(k-1,l)}_{\xi_i}$ for some $\{\xi_i\}$. Also, for $\eta\in X(0,l)$ with $l>0$, we can write $L^{(0,l)}_\eta = \sum_{j=1}^n L^{(0,1)}_{f_j}L^{(0,l-1)}_{\xi_j}$, for some $\{\xi_j\}$. But, we know that the series $\sum_{k=2}^\infty \widetilde{\Phi}_k(T)=\sum_{k=2}^\infty \sum_{i=0}^k L^{(i,k-i)}_{\eta_{i,k-i}}$ Ces\`{a}ro-converges to $T$ in norm.

Bearing in mind Lemma \ref{fourier-char} and the fact that $\widetilde{\Phi}_0\left(L^{(k,l)}_\eta\right)=0$ when $(k,l)\not=(0,0)$, we get the desired sequence from the Ces\`{a}ro sums. 

\textbf{(b)$\Rightarrow$(a)} \\
By Lemma \ref{fourier-char}, $\widetilde{\Phi}_0(T_{ij})= \widetilde{\Phi}_0(S_{ij}) =0$ for all $i,j$. That means for all $\eta\in X(k,l)$, for all $i,j$, we have 
$$T_{ij}\eta, S_{ij}\eta\in \bigoplus_{(0,0)\not=(s,t)\in\mathbb{N}^2} X(k+s,l+t)$$ 
So, 
$$T_{ij}S_{ij}\eta \in \bigoplus_{(s,t)\in\mathbb{N}^2\,s+t\geq2} X(k+s,l+t)$$
Thus, $p_{(k+1,l)}T_{ij}S_{ij} p_{(k,l)} = p_{(k,l+1)}T_{ij}S_{ij} p_{(k,l)} = 0$ where $p_{(s,t)}$ is the projection onto $X(s,t)$ in $B(\mathcal{F}_X)$, and that shows $\widetilde{\Phi}_1(T_{ij}S_{ij}) = 0$. Since $\widetilde{\Phi}_0, \widetilde{\Phi}_1$ are linear and norm-continuous, we can conclude the same for $T$.

\end{proof}

\begin{prop}\label{vacuum-to-n}
Let $X$, $Y$ be subproduct systems over $\mathbb{N}^2$, $\alpha_0^X, \alpha_0^Y$ the vacuum characters of the corresponding tensor algebras, and $\phi:\mathcal{A}_X\to\mathcal{A}_Y$ an isometric isomorphism. Then, $\phi^\ast(\alpha_0^Y)=\alpha_0^X$, if and only if, $\phi$ is $\mathbb{N}$-graded. 

In particular, when $\phi^\ast$ preserves the vacuum characters, we have the equality $\dim X(1,0)+\dim X(0,1) = \dim Y(1,0)+\dim Y(0,1)$.
\end{prop}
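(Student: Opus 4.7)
The reverse direction is immediate: if $\phi$ is $\mathbb{N}$-graded, then for every $\eta\in X(1,0)\oplus X(0,1)$ the element $\phi(L^X_\eta)=L^Y_{A\eta}$ lies in $\ker\alpha_0^Y$, since $\alpha_0^Y$ annihilates every $L^{(1,0)}_a$ and $L^{(0,1)}_b$; combined with $\phi(I)=I$ and Lemma~\ref{finite-gen}, this forces $\phi^\ast(\alpha_0^Y)=\alpha_0^X$.

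For the forward direction I would first exploit Lemma~\ref{long-ugly}. Denote $I_Z := \{T\in\mathcal{A}_Z : \widetilde\Phi_0^Z(T)=\widetilde\Phi_1^Z(T)=0\}$; the lemma identifies $I_Z$ with the norm-closure of the span of products $TS$ with $\alpha_0^Z(T)=\alpha_0^Z(S)=0$. Under the hypothesis $\phi^\ast(\alpha_0^Y)=\alpha_0^X$ we have $\phi(\ker\alpha_0^X)=\ker\alpha_0^Y$, so by continuity and multiplicativity $\phi(I_X)=I_Y$. It then suffices to treat a pure vector $\eta=(x,0)$ with $x\in X(1,0)$, the case $\eta=(0,y)$ being symmetric and the general $\eta$ following by linearity of $\phi$ and of $L^X_\cdot$. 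For such $\eta$, $L^X_\eta=L^{(1,0)}_x$ and Lemma~\ref{norm-eq} gives $\|L^X_\eta\|=\|x\|=\|\eta\|$. Since $\phi(L^X_\eta)\in\ker\alpha_0^Y$, we decompose
$$\phi(L^X_\eta)=L^Y_{A\eta}+R,\qquad A\eta:=\widetilde\Phi_1^Y(\phi(L^X_\eta))\in Y(1,0)\oplus Y(0,1),\ R\in I_Y,$$
and the heart of the proof is to show $R=0$.

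The key idea is to transport the decomposition back through $\phi^{-1}$: applying $\phi^{-1}$ yields $\phi^{-1}(L^Y_{A\eta})=L^X_\eta+R''$ with $R'':=-\phi^{-1}(R)\in I_X$. Contractivity of $\widetilde\Phi_1^Y$ together with the isometry of $\phi$ gives $\|L^Y_{A\eta}\|=\|\widetilde\Phi_1^Y(\phi(L^X_\eta))\|\leq\|\phi(L^X_\eta)\|=\|x\|$, and the isometry of $\phi^{-1}$ transfers this to $\|L^X_\eta+R''\|=\|L^Y_{A\eta}\|\leq\|x\|$. Evaluating at $\Delta_X$, the vector $(L^X_\eta+R'')\Delta_X=x+R''\Delta_X$ is an orthogonal sum, because $R''\in I_X$ sends $\mathbb{C}\Delta_X$ into $\bigoplus_{k\geq 2}q_k^X\mathcal{F}_X$ which is orthogonal to $X(1,0)$. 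Hence
$$\|x\|^2+\|R''\Delta_X\|^2=\|(L^X_\eta+R'')\Delta_X\|^2\leq\|L^X_\eta+R''\|^2\leq\|x\|^2,$$
forcing $R''\Delta_X=0$. The map $T\mapsto T\Delta_X$ is injective on $\mathcal{A}_X$: if $T\Delta_X=0$ then $\widetilde\Phi_k^X(T)\Delta_X=q_k^X T\Delta_X=0$ for every $k$, and since $\widetilde\Phi_k^X(T)=\sum_{i+j=k}L^{(i,j)}_{\eta_{i,j}}$ has $\widetilde\Phi_k^X(T)\Delta_X=\sum_{i+j=k}\eta_{i,j}$ in the orthogonal decomposition $\bigoplus_{i+j=k}X(i,j)$, every $\eta_{i,j}=0$ and Ces\`aro convergence gives $T=0$. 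Thus $R''=0$, so $\phi^{-1}(L^Y_{A\eta})=L^X_\eta$ and $R=0$. Linearly extending $A$ from the pure components yields the $\mathbb{N}$-gradation, and the ``in particular'' statement follows because $A:X(1,0)\oplus X(0,1)\to Y(1,0)\oplus Y(0,1)$ is then a linear isomorphism (cf.\ the remark after Definition~\ref{graded}).

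I expect the main subtlety to lie in obtaining a usable norm bound on $L^Y_{A\eta}$: one cannot simply compare $\|L^Y_{A\eta}\|$ with $\|A\eta\|$, since in a subproduct system $\|L^Y_\zeta\|$ may strictly exceed $\|\zeta\|$ (for commuting shifts the norm is the $\ell^1$-norm of the coefficients rather than the $\ell^2$-norm). The detour through $\widetilde\Phi_1^Y$-contractivity and the $\phi^{-1}$ transport is precisely what allows us to swap the ill-controlled $Y$-side norm for the clean identity $\|L^X_\eta\|=\|\eta\|$ of Lemma~\ref{norm-eq} on the $X$-side.
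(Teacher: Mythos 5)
Your proof is correct and takes essentially the same route as the paper's: decompose $\phi(L^X_\xi)$ via its Fourier coefficients, bound $\|L^Y_{A\xi}\|$ using contractivity of $\widetilde{\Phi}_1$ and the isometry of $\phi$, pull the remainder back through $\phi^{-1}$ via Lemma \ref{long-ugly}, and kill it by evaluating at the vacuum vector $\Delta_X$ and exploiting orthogonality. Your explicit check that $T\mapsto T\Delta_X$ is injective on $\mathcal{A}_X$ supplies a detail the paper leaves implicit in its strict-inequality contradiction; otherwise the two arguments coincide.
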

\begin{proof}
Suppose $\phi^\ast(\alpha_0^Y)=\alpha_0^X$. 
Fix some $\xi\in X(1,0)\oplus X(0,1)$ with $\|\xi\|=1$. By Lemma \ref{norm-eq}, $\|L^X_\xi\| = 1$. Since $\alpha_0(L^X_\xi)=0$, we know $\alpha_0(\phi(L^X_\xi))=0$. So, by Lemma \ref{fourier-char} we can write
$$\phi(L_{\xi}^X) = L^Y_{\eta} + T$$
for some $\eta\in Y(1,0)\oplus Y(0,1)$ and $T\in \mathcal{A}_Y$ such that $\widetilde{\Phi}_0(T) = \widetilde{\Phi}_1(T) = 0$. So, $\widetilde{\Phi}_1(L^Y_\eta+T) = L^Y_\eta$, and since $\widetilde{\Phi}_1$ is contractive, we have
$$\|L^Y_\eta\|\leq \|L^Y_\eta+T\| = \|\phi(L^X_\xi)\|=\|L^X_\xi\|=1$$
Now, by Lemma \ref{long-ugly}, $T$ is the limit of sums of the form $\sum_{j=1}^{t_i} T_{ij}S_{ij}$ in $\mathcal{A}_Y$, where $\alpha_0^Y(T_{ij})= \alpha_0^Y(S_{ij})=0$ for all $i,j$. So, since $\alpha_0^Y = (\phi^{-1})^\ast\left(\alpha_0^X\right)$ and $\phi^{-1}$ is a continuous isomorphism, we know that $\phi^{-1}(T)$ is a limit of similar sums in $\mathcal{A}_X$. Hence, by the second direction of Lemma \ref{long-ugly}, $\widetilde{\Phi}_0(\phi^{-1}(T)) = \widetilde{\Phi}_1(\phi^{-1}(T))=0$. So, in case $\phi^{-1}(T)\not=0$, we have the following contradiction.
$$1 = \|L^X_\xi\Delta_X\| < \|(L^X_\xi-\phi^{-1}(T))\Delta_X\|\leq \|L^X_\xi-\phi^{-1}(T)\| = \| L^Y_\eta\|\leq 1$$
Therefore, $T=0$, and $\phi(L_{\xi}^X) = L^Y_\eta$. 

The dimension equality is a direct consequence of the contents of the remark following Definition \ref{graded}.

Conversely, suppose $\phi$ is $\mathbb{N}$-graded. Then, for all $\xi\in X(1,0)\oplus X(0,1)$, $\alpha_0^Y(\phi(L^X_\xi)) = \alpha_0^Y(L^Y_{C\xi}) = 0$ ($C$ is some map into $Y(1,0)\oplus Y(0,1)$). Hence, $\alpha_0^Y\circ \phi = \alpha_0^X$.
\end{proof}

\subsection{Good subproduct systems}
Suppose $\phi$ is an isometric isomorphism between tensor algebras of subproduct systems. We have seen that the information that $\phi^\ast(\alpha_0^Y)=\alpha_0^X$ implies $\phi$ is $\mathbb{N}$-graded. On the other hand, it was demonstrated that if $\phi$ is $\mathbb{N}\times\mathbb{N}$-graded, then the underlying subproduct systems must be isomorphic. Suppose we could spot a subproduct system $X$ for which a $\mathbb{N}$-graded isomorphism of $\mathcal{A}_X$ must also be $\mathbb{N}\times\mathbb{N}$-graded. Then, when combining all propositions mentioned above, $X$ would have the property that if $\mathcal{A}_X$ and $\mathcal{A}_Y$ are isometrically isomorphic through a vacuum-character preserving isomorphism, then $X\cong Y$. In other words, for a class of such $X$'s the tensor algebra is close (up to the vacuum-character preservation property) to being a complete invariant. 

Indeed, in this subsection we identify one such class of subproduct systems.
\begin{defi}\label{good}
Suppose $X$ is a subproduct system, and $J$ is the $(m,n)$-homogeneous ideal of complex polynomials associated with $X$ as in Theorem \ref{chars}. We will say that $X$ is \textit{good} if the degrees $(p_m,p_n)$ of all polynomials $p\in J$ satisfy $p_m,p_n>0$.
\end{defi}
\textit{Remark:} Although the ideal $J$ in the above definition is constructed after choosing bases for $X(1,0)$ and $X(0,1)$ and is dependent on this choice, the degrees of its polynomials remain fixed under change of basis. One can see it when recalling the generators of $J$ as defined in Section 6.1. Under the notation of that section, the polynomials $p_{i,j}$ are always of degree $(1,1)$, while the degree of $q^x$ for some $x\in X(1,0)^{\otimes i}\otimes X(0,1)^{\otimes j}$ is $(i,j)$ regardless of choice of basis. \\ \\
For a subproduct system $X$ the property of being good can be equivalently put in terms of a geometrical attribute of its character space $\mathcal{M}(\mathcal{A}_X)$. Denote the following sets:
$$\mathcal{C}^{m,n}_1= \left\{(z,0)\in \mathbb{C}^m\times \mathbb{C}^n\,:\, \|z\|\leq 1\right\}\quad \mathcal{C}^{m,n}_2= \left\{(0,w)\in \mathbb{C}^m\times \mathbb{C}^n\,:\, \|w\|\leq 1\right\}$$
$$\mathcal{C}^{m,n} = \mathcal{C}^{m,n}_1\cup\mathcal{C}^{m,n}_2$$
A $(m,n)$-homogeneous complex polynomial of degree $(d_m,d_n)$ vanishes on $\mathcal{C}^{m,n}_1$ if and only if $d_n>0$, and vanishes on $\mathcal{C}^{m,n}_2$ if and only if $d_m>0$. Hence, for a $(m,n)$-homogeneous ideal $J$ of polynomials, $\mathcal{C}^{m,n}\subset \Omega^{m,n}(J)$ if and only if the degrees of all polynomials in $J$ have positive coordinates. Finally, this means $X$ is a good subproduct system if and only if $\mathcal{C}^{m,n}\subset \mathcal{M}(\mathcal{A}_X)$ (under the identification of the character space with a polyball variety, as given by Theorem \ref{chars}). 

We can equip the space $\mathbb{C}^m\times \mathbb{C}^n$ with the norm $\|(z,w)\|_{m,n}:= \max\{\|z\|,\|w\|\}$. Then, polyball varieties become intersections of algebraic varieties with the unit ball of this norm. The next lemma shows that the set $\mathcal{C}^{m,n}$ is invariant under linear isometries with respect to $\|\cdot\|_{m,n}$. This will be the main property that we will exploit in good subproduct systems.

\begin{lem}\label{baruch}
Suppose $u$ is a linear invertible transformation on $\mathbb{C}^n\times \mathbb{C}^n$ which maps a given homogeneous polyball variety $\Omega^{m,n}(I)$ onto a given homogeneous polyball variety $\Omega^{m,n}(J)$. Also, $u|_{\Omega^{m,n}(I)}$ is an isometry with respect to the norm $\|\cdot\|_{m,n}$. Then, 
$$u\left( \Omega^{m,n}(I)\cap \mathcal{C}^{m,n}\right)=\Omega^{m,n}(J)\cap \mathcal{C}^{m,n}$$
\end{lem}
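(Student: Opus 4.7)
The plan is to reduce the statement to the single inclusion $u\bigl(\Omega^{m,n}(I)\cap\mathcal{C}^{m,n}\bigr)\subseteq \Omega^{m,n}(J)\cap\mathcal{C}^{m,n}$, since the reverse inclusion will follow by applying the same argument to $u^{-1}$ (which satisfies the hypotheses with $I$ and $J$ interchanged). The homogeneity of $I$ ensures $0\in\Omega^{m,n}(I)$, and linearity gives $u(0)=0$, so the isometry condition specializes to $\|u(v)\|_{m,n}=\|v\|_{m,n}$ on the variety. I will pick a nonzero $p=(z_0,0)\in\Omega^{m,n}(I)\cap\mathcal{C}^{m,n}_1$ with $\|z_0\|=1$ (the $\mathcal{C}^{m,n}_2$-case is symmetric; scaling down is handled by $(m,n)$-homogeneity), write $u(p)=(z_1,w_1)$ with $a:=\|z_1\|$, $b:=\|w_1\|$, $\max(a,b)=1$, and aim to derive a contradiction under the assumption $a,b>0$.

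The key tool is the $(m,n)$-homogeneity of $J$: the variety $\Omega^{m,n}(J)$ is invariant under the independent rescaling $(z,w)\mapsto(\mu z,\lambda w)$ for $|\mu|,|\lambda|\le 1$. Consequently both $\tilde z:=(z_1/a,0)$ and $\tilde w:=(0,w_1/b)$ lie in $\Omega^{m,n}(J)$, and so does the two-parameter family
\[
q_{\mu,\lambda}:=\mu\tilde z+\lambda\tilde w=(\mu z_1/a,\,\lambda w_1/b)\qquad(|\mu|,|\lambda|\le 1),
\]
with $\|q_{\mu,\lambda}\|_{m,n}=\max(|\mu|,|\lambda|)$. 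Setting $\alpha:=u^{-1}(\tilde z)$ and $\gamma:=u^{-1}(\tilde w)$, linearity yields $s_{\mu,\lambda}:=u^{-1}(q_{\mu,\lambda})=\mu\alpha+\lambda\gamma\in\Omega^{m,n}(I)$, and the isometry forces $\|s_{\mu,\lambda}\|_{m,n}=\max(|\mu|,|\lambda|)$ throughout the bidisk. The critical specialization $s_{a,b}=p$ translates to the linear relation $a\alpha+b\gamma=(z_0,0)$.

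Writing $\alpha=(A_1,A_2)$ and $\gamma=(C_1,C_2)$, the relation splits as $aA_1+bC_1=z_0$ and $aA_2+bC_2=0$, the latter giving $C_2=-\tfrac{a}{b}A_2$. Evaluating the isometric identity at the four corners $(\mu,\lambda)=(1,0),(0,1),(1,1),(1,-1)$ extracts a chain of norm constraints. The $(1,-1)$-test bounds $\|A_2-C_2\|=\tfrac{a+b}{b}\|A_2\|\le 1$, so $\|A_2\|\le\tfrac{b}{a+b}<1$ and thus $\|C_2\|<1$; combining with the $(1,0)$- and $(0,1)$-tests (which give $\|\alpha\|_{m,n}=\|\gamma\|_{m,n}=1$) forces $\|A_1\|=\|C_1\|=1$. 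Because $\|A_2+C_2\|=\tfrac{|b-a|}{b}\|A_2\|\le\tfrac{|b-a|}{a+b}<1$, the $(1,1)$-test further forces $\|A_1+C_1\|=1$.

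The Hilbert-space parallelogram identity on $\mathbb{C}^m$ now closes the argument:
\[
\|A_1-C_1\|^2=2\|A_1\|^2+2\|C_1\|^2-\|A_1+C_1\|^2=4-1=3,
\]
so $\|A_1-C_1\|=\sqrt 3>1$, contradicting the bound $\|s_{1,-1}\|_{m,n}\ge\|A_1-C_1\|$. Hence $a=0$ or $b=0$, meaning $u(p)\in\mathcal{C}^{m,n}$. The main conceptual obstacle is identifying the correct linear object to transport through the isometry: the family $s_{\mu,\lambda}=\mu\alpha+\lambda\gamma$, available only because of the two-parameter rescaling symmetry of homogeneous polyball varieties, provides an isometric linear embedding of $(\{(\mu,\lambda)\in\mathbb{C}^2:|\mu|,|\lambda|\le 1\},\|\cdot\|_\infty)$ into $\Omega^{m,n}(I)$, and its rigidity together with the parallelogram law in the Hilbert factor $\mathbb{C}^m$ is exactly what forbids a ``mixed'' image $u(p)$ with both components nonzero.
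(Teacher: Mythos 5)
Your proof is correct, and it takes a route that is essentially dual to the paper's. You show directly that points of $\mathcal{C}^{m,n}$ are sent into $\mathcal{C}^{m,n}$, whereas the paper shows that \emph{mixed} points $(x,v)$ with $x,v\neq 0$ are sent to mixed points (which yields the opposite inclusion first; both arguments then finish by applying the same reasoning to $u^{-1}$). The decompositions are mirror images of each other: the paper pushes forward the pure components of a mixed source point, writing $u(x,v)=u(x,0)+u(0,v)=(y+\lambda w,\,z+\lambda t)$ for $(x,\lambda v)\in\Omega^{m,n}(I)$, and kills the cross term $2\,\mathrm{Re}\,\lambda\langle w,y\rangle$ by choosing the phase $\lambda$, forcing $w=0$; you instead pull back the pure components of the image $u(z_0,0)=(z_1,w_1)$, obtain an isometric copy $\mu\alpha+\lambda\gamma$ of the bidisk inside $\Omega^{m,n}(I)$, and derive the contradiction from the parallelogram law in the Euclidean factor. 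Both arguments rest on the same two structural facts --- independent rescaling invariance of homogeneous polyball varieties and the fact that $\|\cdot\|_{m,n}$ is a maximum of Hilbert norms --- so neither is more general; the paper's phase-rotation is a one-line computation, while your corner-evaluation scheme is somewhat longer but makes the rigidity of the embedded bidisk explicit. One cosmetic point: the rescaling invariance you state (for $|\mu|,|\lambda|\le 1$) is not literally the one you invoke, since $1/a$ and $1/b$ may exceed $1$; what homogeneity actually gives, and what you need, is that $(\mu z_1,\lambda w_1)\in\Omega^{m,n}(J)$ whenever $|\mu|\,\|z_1\|\le 1$ and $|\lambda|\,\|w_1\|\le 1$.
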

\begin{proof}
Suppose $(x,v)\in \Omega^{m,n}(I)\setminus \mathcal{C}^{m,n}$. That is, $x,v\not=0$. To show that $\Omega^{m,n}(J)\cap \mathcal{C}^{m,n}\subset u\left( \Omega^{m,n}(I)\cap \mathcal{C}^{m,n}\right)$ it is enough to prove that $u(x,v)\not\in \mathcal{C}^{m,n}$. The other inclusion then follows when observing $u^{-1}$.

Note, that from homogeneity of $\Omega^{m,n}(I)$, we have $(\lambda_1 x,\lambda_2 v)\in\Omega^{m,n}(I)$ for all $\lambda_1,\lambda_2\in\mathbb{C}$ such that $\left\|(\lambda_1 x,\lambda_2 v)\right\|_{m,n}\leq1$. Write $u(x,0)=(y,z)$ and $u(0,v)=(w,t)$. Assume without loss of generality that $\|x\|\geq \|v\|$ and that $\|y\|\geq \|z\|$. Then, $\|y\|= \|(y,z)\|_{m,n} = \|(x,0)\|_{m,n} = \|x\|$. So, for all $\lambda\in \mathbb{C}$ with $|\lambda|=1$,
$$\|y\|^2 = \|u(x,\lambda v)\|_{m,n}^2 = \|(y+\lambda w, z+ \lambda t)\|_{m,n}^2\geq \|y+\lambda w\|^2 =$$ $$= \|y\|^2+2Re \lambda \langle w,y\rangle + \|w\|^2$$
Then, for a right choice of $\lambda$ we get that $\|y\|^2\geq \|y\|^2 + \|w\|^2$, hence, $w=0$, and $u(0,v) = (0,t)$. Of course, since $v\not=0$, we must have $t\not=0$.

Now, for an appropriate scalar $\mu$, we can have $(x,\mu v)\in \Omega^{m,n}(I)$ with $\|x\|\leq \|\mu v\|$, and $u(0,\mu v)= (0,\mu t)$. So, we can repeat the argument above, interchanging the coordinates, to conclude that $u(x,0) = (y,0)$ with $y\not=0$.

Thus, $u(x,v) = (y,t)\not\in \mathcal{C}^{m,n}$.
\end{proof}

\begin{prop}\label{n-to-n2}
Suppose $X$, $Y$ are subproduct systems over $\mathbb{N}^2$ with $\dim X(1,0)=m$, $\dim X(0,1)=n$, and $X$ is good. Suppose $\phi:\mathcal{A}_Y\to\mathcal{A}_X$ is an $\mathbb{N}$-graded isomorphism. Then, $\phi$ is $\mathbb{N}\times\mathbb{N}$-graded.
\end{prop}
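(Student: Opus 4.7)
The plan is to transfer the question to the character spaces and exploit the hypothesis that $X$ is good, via Theorem~\ref{chars}. First, since $\phi$ is $\mathbb{N}$-graded, Proposition~\ref{vacuum-to-n} gives $\phi^\ast(\alpha_0^X) = \alpha_0^Y$ and $m+n = m'+n'$, where $m' = \dim Y(1,0)$ and $n' = \dim Y(0,1)$. Writing $\phi(L^Y_\eta) = L^X_{A\eta}$ for an invertible linear $A : Y(1,0)\oplus Y(0,1)\to X(1,0)\oplus X(0,1)$ with blocks $A(y,f) = (B_1 y + B_2 f,\, C_1 y + C_2 f)$, a direct substitution into character coordinates (Theorem~\ref{chars}) shows that $\phi^\ast$ is the restriction of a linear invertible map $M : \mathbb{C}^m\oplus \mathbb{C}^n\to \mathbb{C}^{m'}\oplus\mathbb{C}^{n'}$ (essentially the transpose of $A$), and that $M(\Omega^{m,n}(J_X)) = \Omega^{m',n'}(J_Y)$.

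The main technical step is to upgrade this set-theoretic bijection to a $\|\cdot\|$-isometry on the full algebraic zero sets $V_X := \{(z,w) : p(z,w)=0\text{ for all }p\in J_X\}$ and $V_Y$ (without the $\|\cdot\|_{m,n}\leq 1$ constraint). By $(m,n)$-bihomogeneity, each of $V_X, V_Y$ is a bi-cone over the corresponding polyball variety, and $\Omega^{m,n}(J_X) = V_X\cap \{\|\cdot\|_{m,n}\leq 1\}$ (likewise for $Y$). For nonzero $(z,w)\in V_X$ with $r = \|(z,w)\|_{m,n}$ and $s = \|M(z,w)\|_{m',n'}$, the vector $(z,w)/r$ lies in $\Omega^{m,n}(J_X)$, whose image $M(z,w)/r$ lies in $\Omega^{m',n'}(J_Y)$, forcing $s\leq r$; the same argument applied to $M^{-1}$ yields $r\leq s$, so $M$ is a $\|\cdot\|$-isometry. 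Now the proof of Lemma~\ref{baruch} goes through essentially verbatim, since it uses only the isometry and bihomogeneity and not the coincidence $m=m'$, $n=n'$, giving $M(\Omega^{m,n}(J_X)\cap\mathcal{C}^{m,n}) = \Omega^{m',n'}(J_Y)\cap\mathcal{C}^{m',n'}$. Because $X$ is good, $\mathcal{C}^{m,n}\subset \Omega^{m,n}(J_X)$, and hence $M(\mathcal{C}^{m,n})\subset\mathcal{C}^{m',n'}$.

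Finally, by linearity of $M$, the image $M(\mathbb{C}^m\times\{0\})$ is a linear subspace of $\mathbb{C}^{m'+n'}$ contained (after scaling) in $(\mathbb{C}^{m'}\times\{0\})\cup(\{0\}\times\mathbb{C}^{n'})$. A linear subspace contained in the union of two linear subspaces with trivial intersection lies entirely in one of them, so either $B_2 = 0$ or $B_1 = 0$; the analogous reasoning for $M(\{0\}\times\mathbb{C}^n)$ gives $C_2 = 0$ or $C_1 = 0$. Invertibility of $A$ rules out the two combinations that zero out a full block column, leaving only $B_2 = C_1 = 0$ (so $A$ is block-diagonal, corresponding to $\pi=\mathrm{id}$) or $B_1 = C_2 = 0$ (so $A$ is block-anti-diagonal, corresponding to $\pi$ the coordinate switch). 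Either of these is precisely the condition for $\phi$ to be $\mathbb{N}\times\mathbb{N}$-graded. The principal obstacle is the isometry step, where the max-norm property of $M$ has to be extracted from the fact that $\phi^\ast$ is a priori only a set-theoretic bijection between polyball varieties.
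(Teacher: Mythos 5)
Your argument is correct and follows essentially the same route as the paper's proof: linearize $\phi^\ast$ as (the transpose of) $A$ via the $\mathbb{N}$-gradation, use bihomogeneity of the polyball varieties to show this linear map is a $\|\cdot\|_{m,n}$-isometry between them, invoke Lemma~\ref{baruch} together with goodness of $X$ to get $M(\mathcal{C}^{m,n})\subset\mathcal{C}^{m',n'}$, and conclude the block (anti-)diagonal form of $A$. Your added care in tracking the possibly different splitting $(m',n')$ with $m'+n'=m+n$ and in checking that Lemma~\ref{baruch} survives this generality is a point the paper glosses over, but it does not change the substance of the argument.
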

\begin{proof}
We identify $\mathcal{M}(\mathcal{A}_X)$ with a homogeneous polyball variety relative to fixed bases of $X(1,0)$ and $X(0,1)$, and similarly for $\mathcal{M}(\mathcal{A}_Y)$ with bases for $Y(1,0)$ and $Y(0,1)$. It will also be convenient to consider vectors in $X(1,0)\oplus X(0,1)$ as vectors in $\mathbb{C}^{m}\times \mathbb{C}^{n}$ relative to the same basis (and the same for $Y$). Since $\phi$ is $\mathbb{N}$-graded, we have an invertible linear $A:Y(1,0)\oplus Y(0,1)\to  X(1,0)\oplus X(0,1)$ such that $\phi(L^Y_w) = L^X_{Aw}$. Denote $\alpha_z$ to be the character associated with $z\in \mathcal{M}(\mathcal{A}_X)$. Then, by the description of $\alpha_z$ (Theorems \ref{prod-char} and \ref{chars}), for all $z\in \mathcal{M}(\mathcal{A}_X)$ and $w\in \mathbb{C}^{m}\times\mathbb{C}^{n}$ we have
$$\langle w, \overline{\phi^\ast(z)}\rangle = \alpha_{\phi^\ast(z)}(L^Y_w) = \alpha_z(\phi(L^Y_w)) = \alpha_z(L^X_{Aw}) = \langle Aw, \overline{z}\rangle = \langle w, \overline{A^tz}\rangle$$
Thus, $\phi^\ast= A^t$ is an invertible linear transformation on $\mathbb{C}^{m}\times\mathbb{C}^n$.

Note, that because $\mathcal{M}(\mathcal{A}_X)$ is a homogeneous polyball variety, for all $z\in \mathcal{M}(\mathcal{A}_X)$ we have $\frac{z}{\|z\|_{m,n}}\in \mathcal{M}(\mathcal{A}_X)$. Hence,
$$\|\phi^\ast(z)\|_{m,n} = \|z\|_{m,n}\left\|\phi^\ast\left(\frac{z}{\|z\|_{m,n}}\right)\right\|_{m,n} \leq \|z\|_{m,n}$$
Since the same can be said about $(\phi^\ast)^{-1}$ we can conclude that $\phi^\ast$ acts as an isometry from $\mathcal{M}(\mathcal{A}_X)$ to $\mathcal{M}(\mathcal{A}_Y)$ with respect to the norm $\|\cdot\|_{m,n}$. 

Now, by Lemma \ref{baruch}, 
$$\phi^\ast\left(\mathcal{M}(\mathcal{A}_X)\cap \mathcal{C}^{m,n}\right) = \mathcal{M}(\mathcal{A}_Y)\cap \mathcal{C}^{m,n}$$
But, since $X$ is good, that means $\phi^\ast\left(\mathcal{C}^{m,n}\right)\subset \mathcal{C}^{m,n}$, and thus, $A^t\left(\mathcal{C}^{m,n}\right)\subset \mathcal{C}^{m,n}$. Hence, the invertible linear function $A^t$ must map the subspace $\mathbb{C}^{m}\times\{0\}$ into either $\mathbb{C}^{m}\times\{0\}$ or $\{0\}\times \mathbb{C}^{n}$, and the same for $\{0\}\times\mathbb{C}^{n}$. Therefore, $A$ can be described as
$$A = B\oplus C : X(1,0)\oplus X(0,1)\to Y(1,0)\oplus Y(0,1) \quad(\mbox{\small{or }} Y(0,1)\oplus Y(1,0))$$
\end{proof}

Finally, we have completed a chain of propositions which enables us to drop all assumptions about monoid gradations, and, instead, state the next result in a language related to the action on the character space.
\begin{thm}\label{complete-inv}
Suppose $X$, $Y$ are subproduct systems over $\mathbb{N}^2$ with $\dim X(1,0)=m$, $\dim X(0,1)=n$, and $X$ is good. Suppose there exists an isometric isomorphism $\phi:\mathcal{A}_Y\to \mathcal{A}_X$, and, furthermore, $\phi$ satisfies $\phi^\ast(\alpha_0^X)=\alpha_0^Y$, where $\alpha_0^X, \alpha_0^Y$ are the vacuum characters on the corresponding tensor algebras.

Then, $X$ is isomorphic to $Y$ as a subproduct system. Moreover, in this case $\phi$ is unitarily implemented, that is, there exists a unitary $V: \mathcal{F}_X\to\mathcal{F}_Y$ such that $\phi(T) = V^\ast TV$ for all $T\in\mathcal{A}_Y$. 
\end{thm}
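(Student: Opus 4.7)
The plan is essentially to string together the three main technical propositions of the subsection, which have been designed precisely for this purpose. The statement is a clean synthesis of Propositions \ref{vacuum-to-n}, \ref{n-to-n2}, and \ref{n2-to-isom}, so once these are on hand the argument is a one-line deduction with no new content.

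First I would apply Proposition \ref{vacuum-to-n} to the isometric isomorphism $\phi:\mathcal{A}_Y\to\mathcal{A}_X$ (the proposition is stated for $\mathcal{A}_X\to\mathcal{A}_Y$, but it applies symmetrically when we swap the roles of $X$ and $Y$). The hypothesis $\phi^\ast(\alpha_0^X)=\alpha_0^Y$ is precisely what that proposition needs to conclude that $\phi$ is $\mathbb{N}$-graded. In particular, this also yields the dimension equality $\dim Y(1,0)+\dim Y(0,1)=m+n$, which is implicitly necessary for the next step.

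Next, since $X$ is by assumption good and $\phi$ has just been shown to be $\mathbb{N}$-graded, Proposition \ref{n-to-n2} upgrades $\phi$ to an $\mathbb{N}\times\mathbb{N}$-graded isomorphism (possibly incorporating the coordinate switch on $\mathbb{N}^2$, which is exactly why the coordinate switch was permitted in Definition \ref{graded}). Finally, Proposition \ref{n2-to-isom} converts this graded isomorphism of algebras into an actual isomorphism of the underlying subproduct systems $X\cong Y$, and furthermore provides the unitary $V:\mathcal{F}_X\to\mathcal{F}_Y$ that implements $\phi$ in the form $\phi(T)=V^\ast TV$ (the direction of conjugation matches because $\phi$ goes from $\mathcal{A}_Y$ to $\mathcal{A}_X$, so $V$ is the unitary from Proposition \ref{n2-to-isom} applied to the inverse direction).

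The real ``obstacles'' in this theorem have already been absorbed into the supporting propositions: the Fourier-coefficient manipulation via Lemma \ref{long-ugly} that forces vacuum-preserving isomorphisms to respect the linear level (Proposition \ref{vacuum-to-n}); the geometric rigidity argument, via Lemma \ref{baruch}, that an isometry of polyball varieties containing the ``cross'' $\mathcal{C}^{m,n}$ must preserve the coordinate decomposition (Proposition \ref{n-to-n2}); and the explicit construction of the intertwining unitary from the pointwise data $B,C$ on the generating fibres (Proposition \ref{n2-to-isom}). At the level of this theorem itself there is no remaining obstacle — it is a short assembly of these three results, and the only thing to be careful about is bookkeeping the direction of $\phi$ and the possible presence of the coordinate switch, which is already built into the definition of subproduct system isomorphism.
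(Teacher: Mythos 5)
Your proposal is correct and is essentially identical to the paper's own proof, which consists of exactly the same consecutive application of Propositions \ref{vacuum-to-n}, \ref{n-to-n2} and \ref{n2-to-isom}. Your extra remarks on the direction of $\phi$ and the coordinate switch are accurate bookkeeping that the paper leaves implicit.
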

\begin{proof}
This is just a consecutive application of Proposition \ref{vacuum-to-n}, Proposition \ref{n-to-n2} and Proposition \ref{n2-to-isom}. 
\end{proof}

\begin{examp}
Observe the class of good subproduct systems $X$ with $\dim X(1,0) = \dim X(0,1) =1 $. This class can be described in elementary terms. Since we can assume $X(i,j)\subset X(1,0)^{\otimes i}\otimes X(0,1)^{\otimes j}$ for all $i,j$, the dimension of $X(i,j)$ must be either $1$ or $0$, and the projection $p_{(i,j)}$ in the sense of Theorem \ref{standard-2} must be either the identity or zero. It can be easily verified that the only condition these projections need to satisfy in order for them to define a subproduct system is
$$p_{(i,j)}=0\quad\Rightarrow\quad  p_{(i+k,j)}=p_{(i,j+k)}=0\;\forall k\geq0$$ 
The fact that $X$ is good means only that $p_{(i,0)}=p_{(0,j)}=I$ for all $i,j$. Otherwise, we will have $z^i$ or $w^j$ as a polynomial in $J_X$ (the ideal from Theorem \ref{chars}), and that will contradict the good property. A generic example of such $X$ would look like
$$\begin{array}{c|ccccc}
& \vdots \\
& \mathbb{C} & \vdots &\vdots & \iddots \\
\vdots & \mathbb{C}& 0 & 0 &\cdots\\
2 & \mathbb{C} & \mathbb{C} & 0 & \cdots \\
1 & \mathbb{C} & \mathbb{C} & 0 & \cdots \\
0 & \mathbb{C} & \mathbb{C} & \mathbb{C} & \mathbb{C} & \cdots \\ \hline 
 & 0 & 1 & 2 & \cdots 

\end{array}$$
The intention of this table is that the entry on the $i$-th row and $j$-th column represents $X(i,j)$ as a vector space.\\ \\
We claim that every $X$ in this class uniquely defines its algebra.

Let us identify $\mathcal{M}(\mathcal{A}_X)$ with a polyball variety $\Omega^{1,1}(J_X)$. Suppose $J_X\not=\{0\}$. Then, there is a $(1,1)$-homogeneous $p\in J_X$. Since $X$ is good, it must be of the form $p(z,w) = az^kw^l$ with $k,l\geq1$ and $a\not=0$. Combining the facts that $X$ is good and that $p$ must vanish on $\Omega^{1,1}(J_X)$, we arrive at the equality 
$$\Omega^{1,1}(J_X) = \mathcal{C}^{1,1}\subset \mathbb{C}^2$$
So, if $\mathcal{A}_X\cong\mathcal{A}_Y$, then $\mathcal{M}(\mathcal{A}_Y)$ is homeomorphic to $\Omega^{1,1}(J_X)$ through the function $\phi^\ast$. But, the vacuum character which is identified with $0\in \Omega^{1,1}(J_X)$ is stable under homeomorphisms. Indeed, $0$ is the only point in $\mathcal{C}^{1,1}$ whose complement is not connected. Thus, $\phi^\ast(\alpha_0^Y)=\alpha_0^X$. So, by Theorem \ref{complete-inv}, $X\cong Y$. 

We finish the proof of the claim by showing there is only one subproduct, up to isomorphism, in this class, for which $J_X=\{0\}$. It is the product system with a trivial commutation relation. By that we mean $\dim X(i,j) = 1$ for all $i,j$, and the commutation relation $u:X(0,1)\otimes X(1,0)\to X(0,1)\otimes X(0,1)$ is given by $u(w\otimes z) = z\otimes w$. Indeed, $X(i,j)=\{0\}$ for some $i,j$, would have implied $z^iw^j\in J_X$. Also, any other commutation relation must have the form $u(w\otimes z)= \lambda z\otimes w$ for some $\lambda\not=1$. That would put $(\lambda-1)zw\in J_X$.

\end{examp}

\section{General isomorphisms and the orbit of the vacuum character}
The results of the previous section have shed some light on the isomorphism problem presented earlier. Yet, the discussion was heavily based on the assumption that $\phi^\ast(\alpha_0^Y)=\alpha_0^X$, where $\alpha_0^X, \alpha_0^Y$ are the vacuum characters on the corresponding subproduct systems. This assumption is quite restrictive. Already in the case of product systems, a large class of automorphisms of $\mathcal{A}_X$ which \textit{do not} fix the vacuum character was demonstrated in \cite{solel-unitary}.

Hence, we would like to drop that assumption now, and to inquire into what can be said about $\phi^\ast(\alpha_0^Y)$ in general. Perhaps more importantly, we are interested in what can be inferred about the relation of $X$ to $Y$ from the existence of a special character $\phi^\ast(\alpha_0^Y)$ in $\mathcal{M}(\mathcal{A}_X)$. As for the latter question, we arrive at two numerical values associated with a subproduct system, which are invariants of the isomorphism class of its tensor algebra. In other words, for $X$ and $Y$ as above, we will prove these two numerical values are equal.

For a subproduct system $X$, denote $m_X = \dim X(1,0)$ and $n_X = \dim X(0,1)$. The first invariant will be the dimension sum $m_X+n_X$. The second invariant, $k_X$, is a somewhat less approachable value. We will define it now, and aside from it being an invariant of the algebra isomorphism class, it will be useful to us throughout the forthcoming discussion. 

Recall that the character space of $\mathcal{A}_X$ can be identified with a polyball variety $\Omega^{m_X,n_X}(J_X)$, for a certain ideal of complex polynomials in $m_X+n_X$ variables. Define $k_X$ to be the minimal positive degree of polynomials in $J_X$. The composition of $J_X$ is determined in Theorem \ref{chars}, and although it depends on choice of bases, the degrees of polynomials in the ideal do not depend on that choice. (See the remark after the Definition \ref{good}.) Note, that from the above mentioned description of $J_X$, we know that $k_X\geq 2$ for all subproduct systems $X$.

Now, suppose again that $X$, $Y$ and $\phi$ are as above. Then, $\phi^\ast:\mathcal{M}(\mathcal{A}_Y)\to \mathcal{M}(\mathcal{A}_X)$ is a homeomorphism. Yet, we can view $\phi^\ast$ as a map from $\Omega^{m_Y,n_Y}(J_Y)$ to $\Omega^{m_X,n_X}(J_X)$, and now these spaces have a differential structure. Our methods will essentially try to show that $\phi^\ast$ preserves that differential structure as well. But, we avoid using deep concepts from complex algebraic (or analytic) geometry, and deal with the issue with elementary tools.

Denote $\mathbb{C}_n[t]$ to be the algebra of complex polynomials $\mathbb{C}[t]$ modulo the ideal generated by $t^n$. Note, that a complex polynomial $p$ equals zero when considered as an element in $\mathbb{C}_n[t]$ if and only if $p(0)=p'(0)=\ldots = p^{(n-1)}(0)=0$. The latter property will soon be of interest to us for certain polynomials. But, instead of looking directly on polynomial derivatives, we exploit the advantage of working with $\mathbb{C}_n[t]$ which has a notion of continuity for functions built into it. Indeed, after noting that this is a finite-dimensional algebra, we are free to put any norm on $\mathbb{C}_n[t]$ to end up with the same Euclidean topology. 

\begin{lem}\label{ck-homo}
Suppose $X$ is a subproduct system over $\mathbb{N}^2$, and $\{f_1,\ldots, f_{m_X}\}\subset X(1,0)$, $\{f_{m_X+1},\ldots, f_{m_X+n_X}\}\subset X(0,1)$ are orthonormal bases. Then,\\
\textbf{(a)} For all $\overline{\zeta} = (\zeta_1,\ldots, \zeta_{m_X+n_X})\in \mathbb{C}^{m_X+n_X}$ there exists a norm continuous unital homomorphism $\beta_{\overline{\zeta}}:\mathcal{A}_X\to \mathbb{C}_{k_X}[t]$ such that $\beta_{\overline{\zeta}}\left(L^X_{f_i}\right) = \zeta_it$ for all $1\leq i\leq m_X+n_X$.\\
\textbf{(b)} If $\lim_n \overline{\zeta}_n = \overline{\zeta}\in \mathbb{C}^{m_X+n_X}$, then for all $T\in \mathcal{A}_X$, the sequence $\beta_{\overline{\zeta}_n}(T)$ converges to $\beta_{\overline{\zeta}}(T)$. In other words, the mapping $\overline{\zeta}\mapsto \beta_{\overline{\zeta}}$ is continuous with respect to the strong operator topology on $B(\mathcal{A}_X, \mathbb{C}_{k_X}[t])$.
\end{lem}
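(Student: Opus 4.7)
The plan is to realize $\beta_{\overline\zeta}$ as a ``commutative evaluation'' at the vector $\overline\zeta\, t$, using the presentation of the dense subalgebra $\widetilde{\mathcal{A}}_X \subset \mathcal{A}_X$ as a quotient of non-commutative polynomials coming from Theorem \ref{non-comm-ideal}. First I would define a unital algebra homomorphism
$\mathrm{eval}_{\overline\zeta}: \mathcal{P}_{m_X,n_X} \to \mathbb{C}_{k_X}[t]$
by sending $z_i \mapsto \zeta_i t$ and $w_j \mapsto \zeta_{m_X+j}\, t$. Since $\mathcal{P}_{m_X,n_X}$ is free and $\mathbb{C}_{k_X}[t]$ is commutative, this is well-defined. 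By Theorem \ref{non-comm-ideal}, the kernel $I_X$ of the natural surjection $\mathcal{P}_{m_X,n_X} \twoheadrightarrow \widetilde{\mathcal{A}}_X$ is an $(m_X,n_X)$-homogeneous ideal containing $\{P_{i,j}\}$, whose commutativization $\epsilon(I_X)$ is readily identified with the ideal $J_X$ of Theorem \ref{chars} (the generators correspond under $\epsilon$).

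I expect the main step to be showing $\mathrm{eval}_{\overline\zeta}(I_X) = \{0\}$: by homogeneity it is enough to check this on any $(m_X,n_X)$-homogeneous $P \in I_X$ of bidegree $(d_m,d_n)$. Commutativity of $\mathbb{C}_{k_X}[t]$ then forces $\mathrm{eval}_{\overline\zeta}(P) = \epsilon(P)(\overline\zeta)\, t^{d_m+d_n}$; either $\epsilon(P) = 0$ and this is zero, or else $\epsilon(P) \in J_X \setminus\{0\}$ of bidegree $(d_m,d_n)$, which by the definition of $k_X$ forces $d_m+d_n \geq k_X$ and hence $t^{d_m+d_n}=0$ in $\mathbb{C}_{k_X}[t]$. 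Either way the image is zero, so the evaluation descends to a unital homomorphism $\widetilde\beta_{\overline\zeta}: \widetilde{\mathcal{A}}_X \to \mathbb{C}_{k_X}[t]$ with $\widetilde\beta_{\overline\zeta}(L^X_{f_i}) = \zeta_i t$ for every $i$. This reconciliation of the non-commutative relations of $\mathcal{A}_X$ with the commutative target is the heart of the argument, and it is exactly where both the homogeneity of $I_X$ and the degree bound $k_X$ are used.

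To extend $\widetilde\beta_{\overline\zeta}$ to $\mathcal{A}_X$ I will verify norm-continuity on $\widetilde{\mathcal{A}}_X$. For $T \in \widetilde{\mathcal{A}}_X$ with Fourier decomposition $\Phi_{i,j}(T) = L^{(i,j)}_{\eta_{i,j}}$, only indices with $i+j < k_X$ contribute to $\widetilde\beta_{\overline\zeta}(T)$, and the $(i,j)$-contribution equals $q^{\eta_{i,j}}(\overline\zeta)\, t^{i+j} = \langle \eta_{i,j},\, \zeta_E^{\otimes i}\otimes \zeta_F^{\otimes j}\rangle \, t^{i+j}$, where $\zeta_E = \sum_{i=1}^{m_X} \overline{\zeta_i}\, f_i \in X(1,0)$ and $\zeta_F = \sum_{j=1}^{n_X} \overline{\zeta_{m_X+j}}\, f_{m_X+j} \in X(0,1)$. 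Cauchy--Schwarz together with $\|\eta_{i,j}\| = \|L^{(i,j)}_{\eta_{i,j}}\| = \|\Phi_{i,j}(T)\| \leq \|T\|$ (the contractivity of $\Phi_{i,j}$ being immediate from its integral definition, since $U_{s,t}$ is unitary) then gives $\|\widetilde\beta_{\overline\zeta}(T)\| \leq C(\overline\zeta)\, \|T\|$ in any norm on the finite-dimensional algebra $\mathbb{C}_{k_X}[t]$, with $C(\overline\zeta) = \sum_{i+j<k_X} \|\zeta_E\|^i\|\zeta_F\|^j$ depending only on $\overline\zeta$ and $k_X$. Extending by density produces the desired $\beta_{\overline\zeta}: \mathcal{A}_X \to \mathbb{C}_{k_X}[t]$.

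For part (b), the explicit formula above shows that on $\widetilde{\mathcal{A}}_X$ the map $\overline\zeta \mapsto \beta_{\overline\zeta}(T)$ is a (finite) polynomial expression in $\overline\zeta$, hence continuous. For a general $T \in \mathcal{A}_X$ and a convergent sequence $\overline\zeta_n \to \overline\zeta$, approximating $T$ by elements $T_k \in \widetilde{\mathcal{A}}_X$ and combining the local boundedness of $\overline\zeta \mapsto C(\overline\zeta)$ with the estimate $\|\beta_{\overline\zeta_n}(T - T_k)\| \leq C(\overline\zeta_n)\, \|T - T_k\|$ gives locally uniform convergence $\beta_{\overline\zeta_n}(T_k) \to \beta_{\overline\zeta}(T_k)$ that passes through the limit $k \to \infty$, yielding $\beta_{\overline\zeta_n}(T)\to\beta_{\overline\zeta}(T)$.
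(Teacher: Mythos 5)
Your proposal is correct and follows essentially the same route as the paper: present $\widetilde{\mathcal{A}}_X$ as $\mathcal{P}_{m_X,n_X}/I_X$, check that the evaluation at $\overline{\zeta}\,t$ kills $I_X$ via $\epsilon(I_X)=J_X$ and the definition of $k_X$, extend by boundedness, and deduce (b) from polynomial dependence on $\overline{\zeta}$ together with a locally uniform bound. Your treatment of the norm estimate via $\|\Phi_{i,j}(T)\|\leq\|T\|$ and Cauchy--Schwarz usefully fills in a step the paper only sketches, but it is not a different argument.
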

\begin{proof}
\textbf{(a)} It was previously shown that the algebra $\widetilde{\mathcal{A}}_X$, which is generated (in the algebraic sense) by $\left\{I, L^X_{f_1},\ldots, L^X_{f_{m_X+n_X}}\right\}$ is norm dense in $\mathcal{A}_X$. Suppose $\overline{\zeta} = (\zeta_1,\ldots, \zeta_{m_X+n_X})\in \mathbb{C}^{m_X+n_X}$ is given. We will show that there is a unital homomorphism $\beta_{\overline{\zeta}}:\widetilde{\mathcal{A}}_X\to \mathbb{C}_{k_X}[t]$ such that 
$$\beta_{\overline{\zeta}}(L^X_{f_i}) = \zeta_it$$
Algebraically, $\widetilde{\mathcal{A}}_X$ is a quotient of the free algebra $\mathcal{P}_{m_X,n_X}\cong \mathbb{C}\langle a_1,\ldots, a_{m_X+n_X}\rangle$ by a non-commutative polynomial ideal $I_X$. Moreover, it is easy to see that $I_X$ is exactly the homogeneous ideal that corresponds to $X$ by Theorem \ref{non-comm-ideal}. Define a unital homomorphism $\beta_{\overline{\zeta}}:\mathcal{P}_{m_X,n_X}\to \mathbb{C}_{k_X}[t]$ by sending $a_i$ to $\zeta_it$. Then, for any $P\in I_X$, we have
$$\beta_{\overline{\zeta}}(P) = \epsilon(P)(\zeta_1t,\ldots,\zeta_{m_X+n_X}t)$$
where $\epsilon$ is the natural map from non-commutative polynomials into commutative ones (see Section 6.2). But, notice the simple fact that $J_X= \epsilon(I_X)$ (just observe the definitions of the two). So, either $\epsilon(P)$ is a homogeneous polynomial of degree greater-equal than $k_X$, and then $\epsilon(P)(\zeta_1t,\ldots,\zeta_{m_X+n_X}t)=0$ in $\mathbb{C}_{k_X}[t]$, or the degree of $\epsilon(P)\in J_X$ is less than $k_X$, and then by definition of $k_X$ we must have $\epsilon(P)=0$. Thus, we see that $\beta_{\overline{\zeta}}$ factors through the quotient by $I_X$, and we have the desired homomorphism from $\widetilde{\mathcal{A}}_X$.

To show that $\beta_{\overline{\zeta}}$ can be extended to $\mathcal{A}_X$ it is enough to assure this is a bounded homomorphism with respect to the norm on $\mathbb{C}_{k_X}[t]$. Indeed, it is clear that $\beta_{\overline{\zeta}}$ vanishes outside the finite dimensional summand \\$\bigoplus_{ i+j<k_X} \left\{L^{(i,j)}_x\::\:x\in X(i,j)\right\}$ of $\widetilde{\mathcal{A}}_X$, hence, bounded. 

\textbf{(b)} Suppose $\lim_n \overline{\zeta}_n= \overline{\zeta}\in \mathbb{C}^{m_X+n_X}$. For $T\in \widetilde{\mathcal{A}}_X$, $\beta_{\overline{\zeta}}(T)$ has coefficients which are just polynomial expressions in the coordinates of $\overline{\zeta}$. Thus,$\{\beta_{\overline{\zeta}_n}(T)\}$ clearly converges to $\beta_{\overline{\zeta}}(T)$, for all $T\in \widetilde{\mathcal{A}}_X$.

Now, we already saw that $\beta_{\overline{\zeta}_n}$ all vanish outside the same finite dimensional summand of $\widetilde{\mathcal{A}}_X$. This means $\|\beta_{\overline{\zeta}_n}\|$ can be bounded by the same polynomial expression in the coordinates of $\overline{\zeta}_n$, for all $n$. Since the sequence $\{\overline{\zeta}_n\}$ is bounded, we certainly can find a uniform bound such that $\|\beta_{\overline{\zeta}_n}\|\leq M$ for all $n$. 

Those two facts imply that $\{\beta_{\overline{\zeta}_n}(T)\}$ converges to $\beta_{\overline{\zeta}}(T)$ for all $T\in\mathcal{A}_X$.
\end{proof}

Before going through the main results of this section, we need another lemma which relates the structure of the algebra $\mathbb{C}_n[t]$ with differential properties of polynomials.

\begin{lem}\label{multi}
Let $p\in \mathbb{C}[z_1,\ldots z_n]$ be a given polynomial. Then the following are equivalent for all $\overline{z}\in\mathbb{C}^n$, and $k\in\mathbb{N}$:\\
\textbf{(a)} The vector $\overline{z}$ is a root of $p$ of multiplicity of at least $k$. \\
\textbf{(b)} In $\mathbb{C}_k[t]$, $p(\overline{z} + t\overline{\zeta} + t^2\overline{c_2}\ldots + t^{k-1}\overline{c_{k-1}})=0$ for all $\overline{\zeta},\overline{c_2},\ldots, \overline{c_{k-1}}\in\mathbb{C}^n$. \\
\textbf{(c)} In $\mathbb{C}_k[t]$, $p(\overline{z} + t\overline{\zeta} + t^2\overline{c_2}\ldots + t^{k-1}\overline{c_{k-1}})=0$ for all $\overline{\zeta}\in \mathbb{C}^n$ and some $\overline{c_2},\ldots, \overline{c_{k-1}}\in\mathbb{C}^n$ which may depend on $\overline{\zeta}$. \\
\textbf{(d)} There exists a non-empty open set $U\subset \mathbb{C}^n$ such that for all $\overline{\zeta}\in U$, there are $\overline{c_2},\ldots, \overline{c_{k-1}}\in\mathbb{C}^n$ s.t $p(\overline{z} + t\overline{\zeta} + t^2\overline{c_2}\ldots + t^{k-1}\overline{c_{k-1}})=0$ in $\mathbb{C}_k[t]$.
\end{lem}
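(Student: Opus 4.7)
The plan is to arrange the equivalences in the cycle (a)$\Rightarrow$(b)$\Rightarrow$(c)$\Rightarrow$(d)$\Rightarrow$(a). The implications (b)$\Rightarrow$(c) and (c)$\Rightarrow$(d) are immediate: (b) gives (c) by fixing the $\overline{c_i}$'s to any chosen values, and (c) gives (d) by taking $U = \mathbb{C}^n$. So the real work lies in the other two implications, both of which I would approach through the Taylor expansion
$$p(\overline{z}+\overline{h}) = \sum_{j\geq0} P_j(\overline{h}),$$
where $P_j$ is the homogeneous component of degree $j$ in $\overline{h}$. The standard interpretation of ``$\overline{z}$ is a root of $p$ of multiplicity $\geq k$'' is that $P_0 = P_1 = \ldots = P_{k-1} = 0$, and I will use this throughout.

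For (a)$\Rightarrow$(b), substitute $\overline{h} = t\overline{\zeta} + t^2\overline{c_2}+\ldots + t^{k-1}\overline{c_{k-1}}$ into the expansion. Each monomial appearing in $P_j$ has degree $j$ in $\overline{h}$, and every factor of $\overline{h}$ contributes at least one power of $t$; hence $P_j(t\overline{\zeta}+\ldots)$ is divisible by $t^j$. By assumption only $P_j$ with $j\geq k$ are nonzero, so the whole sum is divisible by $t^k$, i.e.\ vanishes in $\mathbb{C}_k[t]$.

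For (d)$\Rightarrow$(a), let $c_l(\overline{\zeta},\overline{c_2},\ldots,\overline{c_{k-1}})$ denote the coefficient of $t^l$ in $p(\overline{z}+t\overline{\zeta}+\ldots+t^{k-1}\overline{c_{k-1}})$, and prove by induction on $l=0,1,\ldots,k-1$ that $P_l \equiv 0$. The combinatorial heart of the matter is that a monomial of $P_j$ contributes to $t^l$ only through index tuples $i_1,\ldots,i_j \in \{1,\ldots,k-1\}$ with $\sum i_s = l$, which forces $j \leq l$. Under the inductive assumption $P_0 = \ldots = P_{l-1}=0$, every contribution with $j<l$ already vanishes, and for $j=l$ the only admissible tuple is $i_1=\ldots=i_l=1$, which extracts exactly $P_l(\overline{\zeta})$. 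Thus $c_l = P_l(\overline{\zeta})$, independent of the auxiliary vectors $\overline{c_2},\ldots,\overline{c_{k-1}}$. Hypothesis (d) then forces $P_l(\overline{\zeta})=0$ for all $\overline{\zeta}$ in the nonempty open set $U$, and since $P_l$ is a polynomial, $P_l\equiv 0$.

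The main obstacle is the bookkeeping in the combinatorial claim: I need to be careful to verify that once $P_0,\ldots,P_{l-1}$ vanish, the coefficient $c_l$ collapses to a function of $\overline{\zeta}$ alone, with no residual dependence on the $\overline{c_i}$'s. Once that collapse is established, the fact that (d) allows the $\overline{c_i}$'s to be tuned arbitrarily becomes harmless, and the openness of $U$ together with the Zariski-type rigidity of polynomials finishes the induction step uniformly.
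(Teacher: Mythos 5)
Your proof is correct and is essentially the paper's argument in different clothing: where you track the homogeneous Taylor components $P_j$ and the coefficient of $t^l$, the paper differentiates $f(t)=p(\gamma(t))$ and evaluates $f^{(l)}(0)$, which is the same quantity up to a factor of $l!$; the induction that isolates $P_l(\overline{\zeta})$ once the lower components vanish, and the use of the open set $U$ to force the polynomial identity $P_l\equiv 0$, match the paper step for step. The bookkeeping you flag as the main obstacle goes through exactly as you describe, since each factor of $\overline{h}$ contributes at least one power of $t$, forcing $j\le l$ and pinning the $j=l$ contribution to the all-ones tuple.
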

\begin{proof}
For a fixed $\overline{\zeta} = (\zeta_1,\ldots, \zeta_n)\in\mathbb{C}^n$ and $\overline{c_2},\ldots \overline{c_{k-1}}\in \mathbb{C}^n$ define $\gamma(t) =\overline{z} + t\overline{\zeta} + t^2\overline{c_2}\ldots + t^{k-1}\overline{c_{k-1}}$ and the one-variable polynomial $f(t) = p(\gamma(t))$. Then, inductive reasoning shows that the $j$-th derivative of $f$ is given by
$$f^{(j)}(t) = \sum_{i_1,\ldots,i_j=1}^n \gamma'_{i_1}(t)\cdots \gamma'_{i_j}(t)\frac{\partial p}{\partial z_{i_1}\cdots \partial z_{i_j}}(\gamma(t)) + $$ $$+ \sum_{l=1}^{j-1}\sum_{i_1,\ldots,i_l=1}^n g^l_{i_1,\ldots,i_l}(t)\frac{\partial p}{\partial z_{i_1}\cdots \partial z_{i_l}}(\gamma(t))$$
where $\{g^l_{i_1,\ldots,i_t}\}$ are some polynomials.

\textbf{(a)$\;\Rightarrow\;$(b)}: Since $\overline{z}$ is root of multiplicity of at least $k$, we know that all partial derivatives of $p$ up to the $k-1$-th order vanish at $\overline{z}=\gamma(0)$. So, by the above formula we see that $f(0) = \ldots = f^{(k-1)}(0)=0$, regardless of the choices of $\overline{\zeta},\overline{c_2},\ldots,\overline{c_{k-1}}$. 

\textbf{(d)$\;\Rightarrow\;$(a)}: For each $\overline{\zeta}\in U$, fix $\gamma_\zeta(t) = \overline{z} + t\overline{\zeta} + t^2\overline{c_2}\ldots + t^{k-1}\overline{c_{k-1}}$ which satisfies the given condition. That is, $f_\zeta(t) = p(\gamma_\zeta(t))$ satisfies $f^{(j)}_\zeta(0) = 0$ for all $0\leq j\leq k-1$. That immediately gives $p(\overline{z}) = f_\zeta(0) = 0$.

We will prove that the partial derivatives of $p$ at $\overline{z}$ vanish up to the $k-1$-th order, by induction on the order of the derivative. For $j\leq k-1$, we assume all partial derivatives up to the $j-1$-th order vanish. Then, by the above formula we get for all $\overline{\zeta}= (\zeta_1,\ldots, \zeta_n)\in U$,
$$ 0 = f^{(j)}_\zeta(0) =  \sum_{i_1,\ldots,i_j=1}^n \gamma'_{i_1}(0)\cdots \gamma'_{i_j}(0)\frac{\partial p}{\partial z_{i_1}\cdots \partial z_{i_j}}(\gamma(0)) =$$ $$= \sum_{i_1,\ldots,i_j=1}^n \zeta_{i_1}\cdots \zeta_{i_k} \frac{\partial p}{\partial z_{i_1}\cdots \partial z_{i_j}}(\overline{z}) $$
We can view the right-hand side of the above equation as a complex polynomial in $\zeta_1,\ldots \zeta_n$. The equation says this polynomial vanishes on an open set, hence, its coefficients must all be zero. Since partial derivatives commute, those coefficients are integer multiples of the partial derivatives.

\end{proof}

\begin{thm}\label{atleast}
Suppose $X$ and $Y$ are subproduct systems over $\mathbb{N}^2$, and $\phi:\mathcal{A}_X\to\mathcal{A}_Y$ is a bounded isomorphism. Then, \\
\textbf{(a)} $m_X+n_X= m_Y+n_Y$\\
\textbf{(b)} The character $\phi^\ast(\alpha_0^Y)$, when seen as a vector in $\Omega^{m,n}(J_X)$, is a root of multiplicity of at least $k_Y$ of every polynomial in $J_X$.
\end{thm}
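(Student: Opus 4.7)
First I would fix orthonormal bases and identify the character spaces via Theorem~\ref{chars}, with $\overline{z} \in \mathbb{C}^{m_X+n_X}$ and $\overline{w} \in \mathbb{C}^{m_Y+n_Y}$ denoting the coordinate vectors of $\phi^\ast(\alpha_0^Y)$ and $(\phi^{-1})^\ast(\alpha_0^X)$ respectively.

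The key construction is the composed homomorphism $\gamma_{\overline{\eta}} := \beta^Y_{\overline{\eta}} \circ \phi : \mathcal{A}_X \to \mathbb{C}_{k_Y}[t]$ obtained from Lemma~\ref{ck-homo}, one for each $\overline{\eta} \in \mathbb{C}^{m_Y+n_Y}$. Setting $T_i := \phi(L^X_{f_i})$ and invoking the Fourier decomposition together with the fact that $\beta^Y_{\overline{\eta}}$ kills the summand of $\mathcal{S}$-grade $\geq k_Y$ in $\widetilde{\mathcal{A}}_Y$, I obtain
\[
\gamma_{\overline{\eta}}(L^X_{f_i}) \;=\; z_i + b_i(\overline{\eta})\,t + \sum_{j=2}^{k_Y-1} c^{(j)}_i(\overline{\eta})\,t^j,
\]
where the coefficient-of-$t$ yields a linear map $\overline{b} : \mathbb{C}^{m_Y+n_Y} \to \mathbb{C}^{m_X+n_X}$. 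For any $p \in J_X$, the identification $J_X = \epsilon(I_X)$ (used inside the proof of Lemma~\ref{ck-homo}) supplies a non-commutative lift $P \in I_X$; since $P(L^X_{f_1},\ldots) = 0$ in $\mathcal{A}_X$, applying $\gamma_{\overline{\eta}}$ and using commutativity of $\mathbb{C}_{k_Y}[t]$ to replace $P$ by $p = \epsilon(P)$ produces
\[
p\bigl(\overline{z} + \overline{b}(\overline{\eta})\,t + \overline{c}_2(\overline{\eta})\,t^2 + \cdots + \overline{c}_{k_Y-1}(\overline{\eta})\,t^{k_Y-1}\bigr) \;=\; 0 \ \text{in} \ \mathbb{C}_{k_Y}[t]
\]
for every $\overline{\eta}$. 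In view of Lemma~\ref{multi}(c), part~(b) will follow once $\overline{b}$ is shown to be surjective.

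The principal obstacle, and the source of part~(a), is establishing this surjectivity, which I plan to do via a tangent-space count at the linearized level $k = 2$. Let $\mathcal{D}_Y$ be the vector space of norm-continuous homomorphisms $\delta : \mathcal{A}_Y \to \mathbb{C}_2[t]$ with $\delta|_{t=0} = \alpha_0^Y$; the assignment $\overline{\eta} \mapsto \beta^Y_{\overline{\eta}}$ identifies $\mathcal{D}_Y$ with $\mathbb{C}^{m_Y+n_Y}$, because the resulting relation constraints $p(\overline{\eta}\,t) = 0$ for $p \in J_Y$ are vacuous in $\mathbb{C}_2[t]$ thanks to every such $p$ having total degree $\geq k_Y \geq 2$ and thus vanishing together with its gradient at $0$. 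The analogous space $\mathcal{D}_X$ of continuous $\gamma : \mathcal{A}_X \to \mathbb{C}_2[t]$ with $\gamma|_{t=0} = \alpha_{\overline{z}}$ injects, via the coefficient-of-$t$ map, into the tangent space
\[
T_{\overline{z}} \;:=\; \{\overline{\zeta} \in \mathbb{C}^{m_X+n_X} : \nabla p(\overline{z})\cdot \overline{\zeta} = 0 \ \forall p \in J_X\}.
\]
The maps $\delta \mapsto \delta\circ\phi$ and $\gamma \mapsto \gamma\circ\phi^{-1}$ are mutually inverse linear bijections between $\mathcal{D}_Y$ and $\mathcal{D}_X$ — the identity $(\phi^{-1})^\ast(\alpha_{\overline{z}}) = \alpha_0^Y$ keeping the vacuum fibers in correspondence — whence
\[
m_Y + n_Y \;=\; \dim \mathcal{D}_Y \;=\; \dim \mathcal{D}_X \;\leq\; \dim T_{\overline{z}} \;\leq\; m_X + n_X,
\]
and the symmetric count at $\overline{w}$ delivers the reverse inequality. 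This proves (a), forces $T_{\overline{z}} = \mathbb{C}^{m_X+n_X}$, and hence makes $\overline{b}$ a linear bijection.

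With $\overline{b}$ bijective, part~(b) is immediate: for each $\overline{\zeta} \in \mathbb{C}^{m_X+n_X}$, the unique preimage $\overline{\eta}$ together with the higher-order vectors $\overline{c}_j(\overline{\eta})$ already produced verifies condition (c) of Lemma~\ref{multi} at $\overline{z}$ with multiplicity $k_Y$. The delicate ingredient is the mutual bijection $\mathcal{D}_Y \leftrightarrow \mathcal{D}_X$ combined with the identification $\mathcal{D}_Y \cong \mathbb{C}^{m_Y+n_Y}$; both rest on $k_Y \geq 2$ and on careful tracking of where $\phi^\ast$ and $(\phi^{-1})^\ast$ send the two vacuum characters.
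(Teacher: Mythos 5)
Your argument is correct, and while it shares its skeleton with the paper's proof --- both compose the homomorphisms $\beta_{\overline{\eta}}:\mathcal{A}_Y\to\mathbb{C}_{k_Y}[t]$ of Lemma \ref{ck-homo} with $\phi$, read off the resulting identity $p(\overline{z}+t\overline{b}(\overline{\eta})+t^2\overline{c}_2+\cdots)=0$ for $p\in J_X$, and close with Lemma \ref{multi} --- it settles the crucial step (part (a) and the surjectivity of the first-order coefficient map) by a genuinely different mechanism. The paper does not use linearity of the map $\overline{\eta}\mapsto\overline{b}(\overline{\eta})$ at all: it proves this map is injective and continuous and then invokes invariance of domain to conclude both $m_X+n_X=m_Y+n_Y$ and openness of the image, feeding the latter into condition (d) of Lemma \ref{multi}. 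You instead observe that the truncated ($k=2$) homomorphism spaces are finite-dimensional linear objects --- $\mathcal{D}_Y$ is parametrized by $\mathbb{C}^{m_Y+n_Y}$ because every nonzero element of the homogeneous ideal $J_Y$ vanishes to order $k_Y\geq 2$ at the origin, $\mathcal{D}_X$ embeds in the tangent space $T_{\overline{z}}\subset\mathbb{C}^{m_X+n_X}$, and composition with $\phi$ and $\phi^{-1}$ matches them up --- so the dimension count plus the symmetric count at $\overline{w}$ gives (a) and forces $\overline{b}$ to be a bijection, after which condition (c) of Lemma \ref{multi} applies. Your route replaces a nontrivial topological theorem with elementary linear algebra, which is arguably cleaner; the price is some bookkeeping you should make explicit in a full write-up: $\mathcal{D}_Y$ is not literally a vector space of homomorphisms but an affine space over the continuous point-derivations at $\alpha_0^Y$ (equivalently, fix the origin at the character itself and add the $t$-coefficients), the surjectivity of $\overline{\eta}\mapsto h_2\circ\beta_{\overline{\eta}}$ onto $\mathcal{D}_Y$ needs the remark that a continuous homomorphism into $\mathbb{C}_2[t]$ is determined by its values on the generators $L^Y_{f_i}$ and that these values must have the form $\eta_i t$, and the boundedness of $\phi^{-1}$ (needed so that $\gamma\mapsto\gamma\circ\phi^{-1}$ lands in norm-continuous homomorphisms) should be noted as coming from the open mapping theorem. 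None of these affects the validity of the plan.
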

\begin{proof}
First, suppose that $m_X+n_X \leq m_Y+n_Y$. Choose orthonormal bases $\{f_1,\ldots, f_{m_Y}\}\subset Y(1,0)$ and $\{f_{m_Y+1},\ldots, f_{m_Y+n_Y}\}\subset Y(0,1)$, and in a similar manner choose bases $\{g_1,\ldots, g_{m_X+n_X}\}$ for $X$. By Lemma \ref{ck-homo}, for all $\overline{\zeta}\in\mathbb{C}^{m_Y+n_Y}$, there is a continuous unital homomorphism $\beta_{\overline{\zeta}}:\mathcal{A}_Y\to \mathbb{C}_{k_Y}[t]$ such that $\beta_{\overline{\zeta}}\left(L^Y_{f_i}\right) = t\zeta_i$ for all $i$. Define the following continuous homomorphisms:
$$h_1:\mathbb{C}_{k_Y}[t]\to \mathbb{C}\quad h_1(a+tb+ \ldots) = a$$
$$h_2:\mathbb{C}_{k_Y}[t]\to \mathbb{C}_2[t]\quad h_2(a+tb+\ldots)=a+tb$$
Then, $h_1\circ \beta_{\overline{\zeta}}$ equals the vacuum character for all $\overline{\zeta}\in \mathbb{C}^{m_Y+n_Y}$. So, $h_1\circ \beta_{\overline{\zeta}}\circ \phi$ is the character on $\mathcal{A}_X$ associated with $\phi^\ast(\alpha_0^Y)=(x_1,\ldots,x_{m_X+n_X})$. Hence, we can write for all $\overline{\zeta}\in \mathbb{C}^{m_Y+n_Y}$,
$$h_2\circ \beta_{\overline{\zeta}}\circ \phi (L^X_{g_i}) = x_i + ty_i$$
and define $\gamma(\overline{\zeta}):= (y_1,\ldots, y_{m_X+n_X})\in \mathbb{C}^{m_X+n_X}$. Note, that if $\gamma(\overline{\zeta}_1)=\gamma(\overline{\zeta}_2)$, then $h_2\circ \beta_{\overline{\zeta}_1}\circ \phi = h_2\circ \beta_{\overline{\zeta}_2}\circ \phi$ because they agree on the generators of $\mathcal{A}_X$. But, since $\phi$ is an isomorphism, that means $h_2\circ \beta_{\overline{\zeta}_1}=h_2\circ \beta_{\overline{\zeta}_2}$. That, in turn, clearly implies that $\overline{\zeta}_1=\overline{\zeta}_2$, and we can conclude that $\gamma$ is injective.

If $\overline{\zeta}_n\to\overline{\zeta}$, then when applying Lemma \ref{ck-homo}.(b) for $T = \phi(L^X_{g_i})$, we get that $\gamma(\overline{\zeta}_n)_i\to \gamma(\overline{\zeta})_i$. Thus, $\gamma$ is continuous.

It is a simple topological exercise (see, for example, \cite[Exercise 18.11]{topology} for guidance) to show that if we have an injective continuous function $\gamma:\mathbb{C}^{m_Y+n_Y} \to\mathbb{C}^{m_X+n_X}$, then we must have $m_Y+n_Y= m_X+n_X$ which proves \textbf{(a)}, and moreover, the image of $\gamma$ must be open. 

Note, that for all $\overline{\zeta}\in\mathbb{C}^{m_Y+n_Y}$,
$$\left(h_2\beta_{\overline{\zeta}}\phi(L^X_{g_1}),\ldots, h_2\beta_{\overline{\zeta}}\phi(L^X_{m_X+n_X})\right) = \phi^\ast(\alpha_0^Y)+t\gamma(\overline{\zeta}) \in \mathbb{C}_2[t]^{m_X+n_X}$$
which means there are $\overline{c}_2,\ldots, \overline{c}_{k_Y-1}\in \mathbb{C}^{m_X+n_X}$ such that
$$\left(\beta_{\overline{\zeta}}\phi(L^X_{g_1}),\ldots, \beta_{\overline{\zeta}}\phi(L^X_{g_{m_X+n_X}})\right) =$$ $$=
\phi^\ast(\alpha_0^Y)+t\gamma(\overline{\zeta}) + t^2\overline{c}_2 +\ldots + t^{k_Y-1}\overline{c}_{k_Y-1} \in \mathbb{C}_{k_Y}[t]^{m_X+n_X}$$
Thus, for every polynomial $p\in J_X$,
$$p\left(\phi^\ast(\alpha_0^Y)+t\gamma(\overline{\zeta}) + t^2\overline{c}_2 +\ldots + t^{k_Y-1}\overline{c}_{k_Y-1}\right)=$$ $$= \beta_{\overline{\zeta}}\circ\phi\left( p(L^X_{g_1},\ldots, L^X_{g_{m_X+n_X}})\right)= 0$$
Since the image of $\gamma$ has a non-empty interior, we can invoke Lemma \ref{multi} (d)$\;\Rightarrow\;$(a) to finish the proof.

As for the case with $m_X+n_X > m_Y+n_Y$, we apply \textbf{(a)} on the isomorphism $\phi^{-1}$ to get a contradiction.
\end{proof}

\begin{cor}\label{param}
If $\mathcal{A}_X$ is continuously isomorphic to $\mathcal{A}_Y$, then $k_X=k_Y$. 
\end{cor}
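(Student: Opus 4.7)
The plan is to obtain both inequalities $k_Y \leq k_X$ and $k_X \leq k_Y$ by applying Theorem~\ref{atleast}(b) once to $\phi$ and once to $\phi^{-1}$, using only the elementary fact that the multiplicity of a root of a nonzero complex polynomial cannot exceed its total degree.

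For the first inequality, I would pick a nonzero $p \in J_X$ of minimal positive total degree; since $J_X$ is $(m_X,n_X)$-homogeneous, $p$ can be taken bihomogeneous of some bidegree $(a,b)$ with $a+b = k_X$, so every monomial of $p$ has total degree exactly $k_X$. Theorem~\ref{atleast}(b) then asserts that $\phi^\ast(\alpha_0^Y)$ is a root of $p$ of multiplicity at least $k_Y$. But a nonzero polynomial of total degree $k_X$ cannot have a root of multiplicity strictly larger than $k_X$: after translating variables so that the root sits at the origin, the polynomial still has total degree $k_X$, hence at least one of its monomials of degree $\leq k_X$ has a nonzero coefficient, which is, up to a nonzero combinatorial constant, a nonvanishing partial derivative of order $\leq k_X$ at the given point. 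This forces $k_Y \leq k_X$.

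Swapping the roles of $X$ and $Y$ and repeating the argument with $\phi^{-1}: \mathcal{A}_Y \to \mathcal{A}_X$ and a minimal bihomogeneous element of $J_Y$ yields $k_X \leq k_Y$, completing the proof. There is no real obstacle here beyond correctly invoking the multiplicity-vs-degree bound; the substantive work has already been carried out in Theorem~\ref{atleast}(b), which furnishes exactly the symmetric information needed on both sides.
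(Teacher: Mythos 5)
Your proof is correct and follows essentially the same route as the paper: apply Theorem~\ref{atleast}(b) to a minimal-degree homogeneous element of $J_X$ to get $k_Y\leq k_X$ via the multiplicity-versus-degree bound, then repeat with $\phi^{-1}$ for the reverse inequality. The paper phrases this as a proof by contradiction rather than two direct inequalities, but the content is identical (and you spell out the elementary degree bound that the paper merely asserts).
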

\begin{proof}
Suppose $k_X< k_Y$, and $\phi:\mathcal{A}_X\to\mathcal{A}_Y$ is a bounded isomorphism. Let $p\in I_X$ be a homogeneous polynomial of degree $k_X$. Then, by Theorem \ref{atleast} $\phi^\ast(0)$ must be a root of multiplicity no less than $k_Y$ of $p$. But, a polynomial has no roots of multiplicity bigger than its degree. Hence, a contradiction.

Otherwise, if $k_Y>k_X$, we apply the same argument on $\phi^{-1}$.
\end{proof}

In some cases, as in the next example, the last theorem can be conveniently applied to show that the only possible point in $\mathcal{M}(\mathcal{A}_X)$ to which $\alpha_0^Y$ can be sent by $\phi^\ast$ is in fact $\alpha_0^X$. Thus, if $X$ happens to be good, in combination with Theorem \ref{complete-inv} this will imply $X$ uniquely defines its algebra.

For example, suppose $X$ is a good subproduct system which satisfies the dimensions inequality: $\dim X(1,1)<\min\{m_X,n_X\}$. Such $X$'s surely exist: For one example, we can set $X(i,0)= X(1,0)^{\otimes i}$ and $X(0,i)= X(0,1)^{\otimes i}$ for all $i\geq1$. Then set $X(1,1)$ to be a small enough subspace of $X(1,0)\otimes X(0,1)$, and build the rest of the subproduct system using Proposition \ref{construct}.

\begin{cor}
A good subproduct system $X$ which satisfies $\dim X(1,1)<\min\{m_X,n_X\}$ uniquely defines its algebra.
\end{cor}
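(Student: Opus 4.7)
The plan is to reduce to Theorem \ref{complete-inv}. Given an isometric isomorphism $\psi : \mathcal{A}_X \to \mathcal{A}_Y$, it is enough to show $\psi^\ast(\alpha_0^Y) = \alpha_0^X$; Theorem \ref{complete-inv} applied to $\phi := \psi^{-1}$ then delivers $X \cong Y$.

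The first step is to pin down the invariant $k_X$. Because $X$ is good, every non-zero $(m,n)$-homogeneous component of $J_X$ has both degree coordinates positive, so $k_X \geq 2$. On the other hand, the hypothesis $\dim X(1,1) < \min\{m_X, n_X\}$ forces $X(1,1)^\perp \neq \{0\}$, and since the map $q$ is injective on the ``commutative'' tensor product $E \otimes F$, Theorem \ref{chars} supplies a non-zero polynomial of total degree $2$ inside $J_X$. Hence $k_X = 2$, and Corollary \ref{param} gives $k_Y = 2$ as well.

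I would then apply Theorem \ref{atleast}(b) to $\psi$ to conclude that $(z_0, w_0) := \psi^\ast(\alpha_0^Y)$, viewed inside $\Omega^{m_X, n_X}(J_X)$, is a root of multiplicity at least $2$ of every polynomial in $J_X$. For the degree-$2$ generators this condition linearizes nicely: identifying a vector $v \in E \otimes F$ with its matrix $A_v \in M_{m_X,n_X}(\mathbb{C})$ so that $q^v(z,w) = z^t A_v w$, Lemma \ref{multi} reduces the multiplicity-$2$ requirement to the linear system $A_v w_0 = 0$ and $A_v^t z_0 = 0$.

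The main step is a dimension count in $E \otimes F$. Put $V := \{v \in E \otimes F : q^v \in J_X\}$; by Theorem \ref{chars} one has $V \supset X(1,1)^\perp$, so
$$\dim V \geq m_X n_X - \dim X(1,1) > m_X n_X - \min\{m_X, n_X\}.$$
If $w_0 \neq 0$, every $A_v$ with $v \in V$ would lie in the kernel of $A \mapsto A w_0$, which is a subspace of $M_{m_X, n_X}(\mathbb{C})$ of dimension $m_X(n_X-1) = m_X n_X - m_X \leq m_X n_X - \min\{m_X, n_X\}$, contradicting the lower bound just derived on $\dim V$. Thus $w_0 = 0$, and the symmetric argument with $A_v^t z_0 = 0$ forces $z_0 = 0$. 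So $\psi^\ast(\alpha_0^Y) = \alpha_0^X$, and Theorem \ref{complete-inv} completes the proof. The only delicate point I foresee is verifying that the degree-$2$ part of $J_X$ is genuinely captured by a subspace of $E \otimes F$ via $q$; this is exactly where the goodness of $X$ enters, since it rules out any degree-$(2,0)$ or $(0,2)$ contributions (which would make $q$ fail to be injective on the relevant source).
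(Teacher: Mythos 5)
Your proposal is correct and follows essentially the same route as the paper: reduce to Theorem \ref{complete-inv}, use the hypothesis on $\dim X(1,1)$ to get $k_X=k_Y=2$ via Corollary \ref{param}, invoke Theorem \ref{atleast}(b) to make $\psi^\ast(\alpha_0^Y)$ a double root of the degree-$(1,1)$ generators, and then force it to be $0$ by a dimension count on $X(1,1)^\perp\subset X(1,0)\otimes X(0,1)$. Your matrix formulation ($A_v w_0=0$, $A_v^t z_0=0$, and counting the kernel of $A\mapsto Aw_0$) is just a repackaging of the paper's argument that the vectors $h_{ij}$ span all of $X(1,0)$, so no substantive difference.
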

\begin{proof}
Suppose $\phi:\mathcal{A}_X\to\mathcal{A}_Y$ is an isometric isomorphism. Since $\dim X(1,1)< m_Xn_X$, we know $X(1,1)\not= X(1,0)\otimes X(0,1)$ and that means there are $(1,1)$-homogeneous polynomials in $J_X$ (the polynomial ideal associated with $X$). Hence, $k_X=2$, and by Corollary \ref{param} $k_Y=2$. So, by Theorem \ref{atleast}, $\phi^\ast(\alpha_0^Y)$, when considered as a vector in $\Omega^{m_X,n_X}(J_X)$, is a root of order at least 2 of all polynomials in $J_X$. Let $\{v_i\}_{i=1}^t$ be a basis for the vector space $\left(X(1,0)\otimes X(0,1)\right)\ominus X(1,1)$. From the bound on $\dim X(1,1)$, we know $t> m_Xn_X-n_X = (m_X-1)n_X$. We also can write
$$v_i = \sum_{j=1}^{n_X} h_{ij} \otimes f_j$$
where $\{f_j\}$ is a basis for $X(0,1)$ (relative to which we construct $J_X$), and $\{h_{ij}\}$ are vectors in $X(1,0)$. Define $V = span\{h_{ij}\}_{i,j=1}^{t, n_X}$, and notice that
$$n_X\cdot\dim V \geq \dim span\{v_i\}_{i=1}^t = t> (m_X-1)n_X$$
Hence, $\dim V= m_X$. Next, we recall from the construction of $J_X$ that each $v_i \perp X(1,1)$ defines a polynomial
$$q^{v_i}(\overline{z},\overline{w}) = \sum_{j=1}^{n_X} \langle \overline{z}, \widetilde{h_{ij}}\rangle w_j \in J_X$$ 
where $\widetilde{h_{ij}}$ refers to the vector in $\mathbb{C}^{m_X}$ which represents $h_{ij}$ under the chosen basis for $J_X$. Since $\phi^\ast(\alpha_0^Y)$ is a root of order at least 2 of all polynomials in $J_X$, we must have for all $i,j$,
$$0 = \frac{\partial q^{v_i}}{\partial w_j}\left(\phi^\ast(\alpha_0^Y)\right) = \left\langle \phi^\ast(\alpha_0^Y), \left(\widetilde{h_{ij}},0\right)\right\rangle \quad \Rightarrow \quad\phi^\ast(\alpha_0^Y) = (0,w) \in \Omega^{m_X,n_X}(J_X)$$
 The last implication above comes from the fact that $\{\widetilde{h_{ij}}\}$ span a $m_X$-dimensional space.
 
Now, since we also have $t> m_Xn_X-m_X$, we can run the symmetrical argument (switching $X(1,0)$ and $X(0,1)$) to obtain $\phi^\ast(\alpha_0^Y)= (z,0)\in \Omega^{m_X,n_X}(J_X)$. So, $\phi^\ast(\alpha_0^Y) =\alpha_0^X$, and by Theorem \ref{complete-inv} we have $X\cong Y$.
\end{proof}

\section{Acknowledgments}
I would like to thank my advisor, Baruch Solel, for suggesting this topic, providing guidance, and careful reading of many drafts.

\begin{thebibliography}{10}

\bibitem{popescu-distance}
A.~Arias and G.~Popescu.
\newblock Noncommutative interpolation and {P}oisson transforms.
\newblock {\em Israel J. Math.}, 115:205--234, 2000.

\bibitem{inclusion}
B.~V.~R. Bhat and M.~Mukherjee.
\newblock Inclusion systems and amalgamated products of product systems.
\newblock {\em Infin. Dimens. Anal. Quantum Probab. Relat. Top.}, 13(1):1--26,
  2010.

\bibitem{dav-distance}
K.~R. Davidson and D.~R. Pitts.
\newblock Nevanlinna-{P}ick interpolation for non-commutative analytic
  {T}oeplitz algebras.
\newblock {\em Integral Equations Operator Theory}, 31(3):321--337, 1998.

\bibitem{dav-rep}
K.~R. Davidson, S.~C. Power, and D.~Yang.
\newblock Dilation theory for rank 2 graph algebras.
\newblock {\em J. Operator Theory}, 63(2):245--270, 2010.

\bibitem{orr-subprod}
K.~R. Davidson, C.~Ramsey, and O.~M. Shalit.
\newblock The isomorphism problem for some universal operator algebras.
\newblock {\em Adv. Math.}, 228:167--218, 2011.

\bibitem{topology}
M.~Greenberg and J.~Harper.
\newblock {\em Algebraic topology: a first course}.
\newblock Mathematics lecture note series. Benjamin/Cummings Pub. Co., 1981.

\bibitem{gur-meyer}
M.~Gurevich and J.~Meyer.
\newblock Models for quotients of {H}ardy algebras.
\newblock {\em in preparation}.

\bibitem{highrank-one}
D.~W. Kribs and S.~C. Power.
\newblock The analytic algebras of higher rank graphs.
\newblock {\em Math. Proc. R. Ir. Acad.}, 106A(2):199--218 (electronic), 2006.

\bibitem{highrank}
A.~Kumjian and D.~Pask.
\newblock Higher rank graph {$C^\ast$}-algebras.
\newblock {\em New York J. Math.}, 6:1--20 (electronic), 2000.

\bibitem{solel-unitary}
S.~C. Power and B.~Solel.
\newblock Operator algebras associated with unitary commutation relations.
\newblock {\em J. Funct. Anal.}, 260(6):1583--1614, 2011.

\bibitem{solel-orr}
O.~M. Shalit and B.~Solel.
\newblock Subproduct systems.
\newblock {\em Doc. Math.}, 14:801--868, 2009.

\bibitem{ami}
A.~Viselter.
\newblock Covariant representations of subproduct systems.
\newblock {\em Proceedings of the London Mathematical Society}, 2010.

\end{thebibliography}

\end{document}